\documentclass[12pt]{amsart}

\usepackage[all]{xypic}
\usepackage{tikz}
\usepackage{ragged2e}
\usetikzlibrary{arrows} 
\usetikzlibrary{decorations.markings}
\usepackage{graphicx}
\usepackage{bm}
\usepackage{float}
\usepackage{epsf}
\usepackage{verbatim} 
\usepackage{amsmath}
\usepackage{amsfonts}
\usepackage{amssymb}
\usepackage{mathrsfs}
\usepackage{amsthm}
\usepackage{newlfont}
\usepackage{enumitem}
\usepackage[new]{old-arrows}
\usepackage{booktabs}
\usepackage{enumitem}
\usepackage{makecell}
\usepackage[multiple]{footmisc}
\usepackage[answerdelayed,lastexercise]{exercise}
\usepackage[hidelinks]{hyperref}
\usepackage{mleftright}

\usepackage{fnpct}
\usepackage{colonequals} % From Poonen's "writing"
\usepackage{stmaryrd}

\usepackage{apptools}
\usepackage{chngcntr}
\newtheorem{thm}{Theorem}[section]
\newtheorem{prop}[thm]{Proposition}
\newtheorem{lem}[thm]{Lemma}
\newtheorem{cor}[thm]{Corollary}
\newtheorem{conj}[thm]{Conjecture}

\theoremstyle{definition}
\newtheorem{defn}[thm]{Definition}

\newtheorem{example}[thm]{Example}

\theoremstyle{remark}
\newtheorem{rem}[thm]{Remark}

\numberwithin{equation}{section}
\numberwithin{thm}{section}

\makeatletter 
\newcommand\mynobreakpar{\vspace{0.02in}\par\nobreak\@afterheading}  
\makeatother

\makeatletter
\@addtoreset{footnote}{section}
\makeatother

\usepackage{xcolor}
\hypersetup{
	colorlinks,
	linkcolor={red!50!black},
	citecolor={blue!50!black},
	urlcolor={blue!80!black}
}

\AtBeginDocument{%
	\def\MR#1{}
}

\DeclareMathOperator{\Hom}{Hom}
\DeclareMathOperator{\End}{End}
\DeclareMathOperator{\Aut}{Aut}

\DeclareMathOperator{\Ht}{ht}

\DeclareMathOperator{\Spec}{Spec}

\DeclareMathOperator{\chr}{char}

\DeclareMathOperator{\Mat}{Mat}
\DeclareMathOperator{\ord}{ord}

\DeclareMathOperator{\Vol}{Vol}
\DeclareMathOperator{\Gal}{Gal}
\DeclareMathOperator{\GL}{GL}
\DeclareMathOperator{\PGL}{PGL}

\DeclareMathOperator{\Tr}{Tr}

\DeclareMathOperator{\diag}{diag}
 
\DeclareMathOperator{\Nr}{Nr}

\DeclareMathOperator{\disc}{disc}

\newcommand{\fN}{\mathfrak{N}}
\newcommand{\fO}{\mathfrak{O}}
\newcommand{\fP}{\mathfrak{P}}

\newcommand{\fm}{\mathfrak{m}}
\newcommand{\fn}{\mathfrak{n}}

\newcommand{\fp}{\mathfrak{p}}
\newcommand{\fq}{\mathfrak{q}}

\newcommand{\cE}{\mathcal{E}}

\newcommand{\cM}{\mathcal{M}}

\newcommand{\cO}{\mathcal{O}}

\newcommand{\cT}{\mathcal{T}}

\newcommand{\A}{\mathbb{A}}

\newcommand{\C}{\mathbb{C}}

\newcommand{\F}{\mathbb{F}}

\newcommand{\PP}{\mathbb{P}}
\newcommand{\Q}{\mathbb{Q}}
\newcommand{\R}{\mathbb{R}}

\newcommand{\Z}{\mathbb{Z}}

\newcommand{\G}{\Gamma}

\newcommand{\To}{\longrightarrow}

\newcommand{\Fi}{F_\infty}

\newcommand{\oF}{\overline{\F}}

\newcommand{\La}{\Lambda}
\newcommand{\la}{\lambda}

\newcommand{\norm}[1]{\left\Vert#1\right\Vert}

\newcommand{\twist}[1]{#1\!\left\{\tau\right\}}

\newcommand{\atwist}[2]{#1\langle#2\rangle}

\newcommand{\abs}[1]{\left|#1\right|}

\title[Stabilization of isogeny spaces]{Stabilization of isogeny spaces between supersingular Drinfeld modules}

\author{Giacomo Micheli} 
\address{Department of Mathematics \& Statistics, The University of South Florida, Tampa, Florida, United States of America}
\email{gmicheli@usf.edu}

\author{Mihran Papikian}
\address{Department of Mathematics, Pennsylvania State University, University Park, Pennsylvania, United States of America}
\email{papikian@psu.edu}

\thanks{The first author was supported by NSF CAREER grant 2338424. The second author was supported in part by the Simons Foundation, award number MPS-TSM-00008093.} 

\subjclass[2020]{11G09, 11R52, 11F41, 11H06, 16S36}
\keywords{Supersingular Drinfeld modules; successive minima; quaternion orders; Brandt matrices; automorphic forms; theta series.}

\begin{document}

\begin{abstract}
	Let $\fp$ be a prime of degree $d$ in $A = \F_q[T]$ and let $\phi, \psi$ be supersingular Drinfeld modules of rank $r \geq 2$ in $A$-characteristic $\fp$. We study the $\F_q$-dimension of the space
	\[
	\cM_s(\phi, \psi) = \{u \in \Hom(\phi, \psi) : \deg_\tau u \leq s\}
	\]
	as a function of $s$. By analyzing $\Hom(\phi, \psi)$ as a normed $A$-lattice in the local division algebra at $\infty$ via its successive minima, we obtain an exact closed-form expression for $\dim_{\F_q}\cM_s(\phi, \psi)$ valid for every $s \geq 0$, together with structural constraints on the successive-minima multiset which imply the stabilization formula
	\[
	\dim_{\F_q}\cM_s(\phi, \psi) = r(s+1) - \tfrac{r(r-1)(d-1)}{2}
	\]
	for all $s \geq r^2(r-1)(d-1)/2$. We conjecture that the optimal threshold is $s \geq (r-1)(d-1) - 1$, and prove this sharp form for $r = 2$ by independent automorphic methods, using the decomposition of a Brandt-type theta series on the Bruhat--Tits tree of $\PGL_2(\Fi)$ into Eisenstein and cuspidal parts together with the polynomiality of the cuspidal $L$-function. We also recast our results in Mornev's geometric framework, in which the conjecture becomes a cohomology-vanishing statement for a family of vector bundles on $\PP^1$, and illustrate the theory with explicit examples in which all successive-minima multisets permitted by our constraints are realized.
\end{abstract}
	
	\maketitle
	
	%=============================================
	
	\section{Introduction}
	
	\subsection*{Motivation and main results}
	Let $A = \F_q[T]$ and $F = \F_q(T)$. Fix a prime $\fp\lhd A$ of degree $d$ and let $\phi, \psi$ be supersingular Drinfeld modules of rank $r\geq 2$ in $A$-characteristic $\fp$ over $\oF_\fp$; definitions are recalled in Section~\ref{sSupersingular}. The space of isogenies $\Hom(\phi, \psi)$ is a free $A$-module of rank $r^2$, naturally filtered by the $\tau$-degree:
	\[
	\cM_s(\phi, \psi) \colonequals \{u \in \Hom(\phi, \psi) : \deg_\tau u \leq s\}, \qquad s\geq 0.
	\]
	Each $\cM_s$ is a finite-dimensional $\F_q$-vector space, and the basic question of this paper is the precise value of $\dim_{\F_q}\cM_s(\phi, \psi)$, and the extent to which it depends on the pair $(\phi, \psi)$ 
	rather than on the invariants $r$, $d$, $s$ alone.
	
	The question arose for us from rank-metric coding. For a prime $\fq\neq \fp$, every element of $\Hom(\phi, \psi)$ acts $A$-linearly on the $\fq$-torsion module $\phi[\fq]\cong \F_\fq^r$, and when $\deg(\fq) > s$ this gives an embedding
	\[
	\cM_s(\phi, \psi) \hookrightarrow \Hom_{\F_\fq}(\phi[\fq], \psi[\fq])\cong \Mat_r(\F_\fq)
	\]
	in which every nonzero element acts invertibly, i.e., a rank-metric code of maximum rank distance. Such a code attains the Singleton bound for rank-metric codes, so is a \emph{semifield code} in the sense of \cite{MP1}, precisely when $\dim_{\F_q}\cM_s = r\deg(\fq)$. The supersingular hypothesis makes $\Hom(\phi, \psi)$ as large as possible (free of rank $r^2$ over $A$), but for $d\geq 2$ the dimension falls short of this benchmark by a positive amount. Pinning down that shortfall, and how it varies with $\phi, \psi$, is the central numerical question.
	
	Our first main result gives an explicit answer in all ranks.
	
	\begin{thm}\label{thmA}
		For every $s\geq r^2(r-1)(d-1)/2$,
		\[
		\dim_{\F_q}\cM_s(\phi, \psi) = r(s+1) - \frac{r(r-1)(d-1)}{2}.
		\]
	\end{thm}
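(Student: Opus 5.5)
We regard $\Lambda\colonequals\Hom(\phi,\psi)$ as an $A$-lattice of rank $r^2$ inside $D_\infty$, the central division algebra of invariant $1/r$ over $\Fi$, via $\Hom(\phi,\psi)\otimes_A\Fi\cong\End(\phi)\otimes_A\Fi\cong D_\infty$ (fixing an isogeny). It carries the norm $u\mapsto q^{\deg_\tau u}$, with $\deg_\tau u = r\cdot(\text{valuation-degree of } u)$ measured in $D_\infty$, since $\deg_\tau\psi_a=r\deg a$. The condition $\deg_\tau u\le s$ defines a ball in $D_\infty$, and $\cM_s=\Lambda\cap\{\deg_\tau\le s\}$.

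The plan has three steps. \textbf{First}, choose a reduced (Minkowski/Lenstra-type) basis $e_1,\dots,e_{r^2}$ of $\Lambda$ over $A$ with successive minima $\lambda_i=\deg_\tau e_i$. The non-archimedean norm makes such a basis orthogonal:
\[
\deg_\tau\Bigl(\sum a_ie_i\Bigr)=\max_i\bigl(r\deg a_i+\lambda_i\bigr).
\]
This holds because $\deg_\tau$ takes values in $\Z$, and the residues mod $r$ of $\lambda_i$ can be arranged so that leading terms do not cancel; this is the point to verify using the structure of the maximal order of $D_\infty$, whose valuation group is $\frac1r\Z$. It gives the exact formula
\[
\dim\cM_s=\sum_{i:\lambda_i\le s}\Bigl(\Bigl\lfloor\frac{s-\lambda_i}{r}\Bigr\rfloor+1\Bigr).
\]

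\textbf{Second}, compute the sum $\sum_i\lambda_i$ from a volume/discriminant computation. The covolume of $\Lambda$ is determined by the reduced discriminant of the maximal order $\End(\phi)$, which ramifies exactly at $\fp$ and $\infty$, so its discriminant is $\fp^{r(r-1)}$. Comparing with the covolume of $\Mat$-type lattices in $D_\infty$ via orthogonality yields an identity of the form $\sum_i\lambda_i = C(r)+\tfrac{r^2(r-1)(d-1)}{2}$. We also need the residue distribution: each residue class mod $r$ should be hit by exactly $r$ of the $\lambda_i$. This follows from the $\cO_{D_\infty}$-module structure, because $\Lambda\otimes\cO_\infty$ is a right module over a maximal order and its reduced basis splits by valuation.

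\textbf{Third}, once $s\ge\max\lambda_i$, every floor is exact on average over residues. With $r$ values in each class, $\sum_i(\lfloor (s-\lambda_i)/r\rfloor+1)$ equals $\frac{1}{r}\bigl(r^2 s-\sum\lambda_i\bigr)+\text{const}$, which simplifies to $r(s+1)-r(r-1)(d-1)/2$ after inserting the sum from the second step. It remains to bound $\max\lambda_i\le r^2(r-1)(d-1)/2$. Since $\lambda_1=0$ (some isogeny of minimal degree exists; for $\phi=\psi$ it is the identity, in general one normalizes), the $\lambda_i\ge 0$, and at least $r$ of them are small. Hence the largest is at most the total sum $\sum\lambda_i-C(r)$, giving the stated threshold.

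I expect the main obstacle to be the second step, namely pinning down the exact discriminant identity for $\sum\lambda_i$ together with the equidistribution of residues mod $r$. This requires carefully normalizing the Haar measure on $D_\infty$ and relating $\deg_\tau$ to the reduced norm. I would first verify the computation for $\phi=\psi$ with $d=1$, where $\Lambda$ should equal the standard maximal order with $\lambda$'s $0,1,\dots,r-1$ each repeated $r$ times.
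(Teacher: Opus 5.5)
Your first two steps follow the paper's route: a successive-minima basis, the count $\sum_i\max\bigl(0,\lfloor (s-\lambda_i)/r\rfloor+1\bigr)$, residue equidistribution, and $\sum_i\lambda_i=r^2(r-1)d/2$ (so $C(r)=r^2(r-1)/2$). The \textbf{genuine gap} is in the last step.

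You claim $\lambda_{\max}\le \sum_i\lambda_i-C(r)=r^2(r-1)(d-1)/2$. This would need the other $r^2-1$ minima to sum to at least $r^2(r-1)/2$. Residue equidistribution and $\lambda_i\ge 0$ give less. Let $\rho^\ast$ be the residue of $\lambda_{\max}$ mod $r$. The remaining entries are $r$ in each class $\rho\ne\rho^\ast$, each $\ge\rho$, and $r-1$ in class $\rho^\ast$, each $\ge\rho^\ast$. So they only sum to at least $r^2(r-1)/2-\rho^\ast$, and the honest bound is $\lambda_{\max}\le r^2(r-1)(d-1)/2+(r-1)$.

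Your stronger bound is in fact false. In your own test case $d=1$, the minima are $0,\dots,r-1$, each $r$ times (Proposition~\ref{prop:phi-vanishing}), so $\lambda_{\max}=r-1>0$. Combined with your threshold $s\ge\lambda_{\max}$, this only gives stabilization for $s\ge r^2(r-1)(d-1)/2+r-1$, which misses the stated range.

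\textbf{The missing observation:} the threshold $s\ge\lambda_{\max}$ is wasteful. A minimum with $s<\lambda_i\le s+r$ gives $\lfloor (s-\lambda_i)/r\rfloor+1=0$, so it contributes nothing to the untruncated sum $\sum_{\text{all } i}\bigl(\lfloor (s-\lambda_i)/r\rfloor+1\bigr)$ either. Hence the truncated count already equals the untruncated one, and therefore $r(s+1)-r(r-1)(d-1)/2$, once $s\ge\lambda_{\max}-r$. The paper uses $s\ge\lambda_{\max}-r+1$ in Corollary~\ref{cor:stable-range-per-Lambda}. Together with $\lambda_{\max}\le r^2(r-1)(d-1)/2+(r-1)$, this gives exactly the range $s\ge r^2(r-1)(d-1)/2$.

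Two smaller points:
\begin{enumerate}
\item Your claim that $\lambda_1=0$ for $\Hom(\phi,\psi)$ ``after normalizing'' is wrong. $\lambda_1$ is the least $\tau$-degree of an isogeny $\phi\to\psi$, which is $\ge 1$ when $\phi\not\cong\psi$, and there is nothing to normalize since the lattice is fixed. You only need $\lambda_i\ge 0$, so this is harmless.
\item Orthogonality of a successive-minima basis does not come from residues mod $r$: each class contains $r$ minima, so residues cannot prevent cancellation. It comes from the successive-minima property itself (Taguchi, Lem.~4.2). Residue equidistribution is then proved by rescaling the basis to an $\cO_\infty$-basis of $\fO_\infty$ adapted to the $\fP_\infty$-filtration (Lemma~\ref{lem:scaled-SMB-basis}).
\end{enumerate}
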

	
	The threshold $r^2(r-1)(d-1)/2$ is not optimal: we conjecture (Conjecture~\ref{conj:emax-bound}) that the same formula holds in the sharper range $s\geq (r-1)(d-1) - 1$. We establish this sharp range for rank $2$ by independent automorphic methods.
	
	\begin{thm}\label{thmB}
		Let $r = 2$. For every $s\geq d - 2$,
		\[
		\dim_{\F_q}\cM_s(\phi, \psi) = 2(s+1) - (d-1).
		\]
	\end{thm}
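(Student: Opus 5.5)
The proof of Theorem~\ref{thmB} will go through the rank-$2$ case of the lattice picture and then use automorphic forms to control the one quantity the lattice picture leaves free. For $r=2$, $\Hom(\phi,\psi)$ is a rank-$4$ $A$-lattice in the quaternion division algebra $D_\infty$ over $\Fi$ ramified at $\infty$. The function $u\mapsto \deg_\tau u$ gives a norm on it, namely $\deg_\tau u = 2\deg_T \Nr(u)/2=\deg \Nr(u)$ up to normalization, where $\Nr$ is the reduced norm. Hence $\dim_{\F_q}\cM_s(\phi,\psi)$ counts the elements of an $A$-lattice whose norm has degree at most $s$. By Theorem~\ref{thmA} and the successive-minima description of the paper, $\dim\cM_s = \sum_i \max(0, s - e_i + 1)$ summed over the four successive minima $e_1\le\dots\le e_4$, suitably normalized. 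The stable formula holds exactly when $s\ge e_4-1$. The sum of the minima is fixed by the covolume, which is determined by the discriminant $\fp$, and this recovers the constant $d-1$. So the task reduces to proving $e_{\max}\le d-1$.

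To bound $e_{\max}$, I would form the generating function $\theta(z)=\sum_{s\ge0}\#\{u\in \Hom(\phi,\psi): \deg_\tau u = s\}\, z^s$. Equivalently, I would use the counts $\#\cM_s = q^{\dim\cM_s}$. These are entries of Brandt matrices attached to the maximal order $\End(\phi)$ of the definite quaternion algebra ramified at $\fp$ and $\infty$. The entries indexed by $\deg \Nr = n$ are sums over the ideals $\mathfrak{n}$ of degree $n$ of the Brandt matrix entries $B(\mathfrak n)_{\phi,\psi}$. Through the Jacquet--Langlands correspondence, viewed as a function on the edges of the Bruhat--Tits tree of $\PGL_2(\Fi)$, the theta series is an automorphic form of level $\fp$ on $\Gamma_0(\fp)$. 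It decomposes into an Eisenstein part and a cuspidal part. The Eisenstein part is independent of $(\phi,\psi)$ up to the mass weights. Summing it over all ideals of degree $n$ gives a term that reproduces $q^{2(s+1)-(d-1)}$, the main term, possibly plus lower-order pieces. These lower-order pieces must be identified explicitly.

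The cuspidal part contributes $\sum_f c_f\, a_f(\mathfrak n)$ over newforms $f$ of level $\fp$. I would sum the Hecke eigenvalues over all monic $\mathfrak n$ of degree $n$, which gives the coefficient of $z^n$ in the $L$-function $L(f,z)$. Since $\fp$ has degree $d$ and $\infty$ is also a place of bad reduction, $L(f,z)$ is a polynomial of degree $d-3$ in $z$ (the conductor has degree $d+1$, and the degree is $\deg(\text{cond})-4$). Therefore the cuspidal contribution to the count of elements with $\deg \Nr = n$ vanishes for $n> d-3$. In terms of cumulative counts $\#\cM_s$, I expect this to translate, after accounting for the shift between $s$ and $\deg\Nr$ and for the factor $q-1$ coming from units, into exact agreement with the Eisenstein prediction for $s\ge d-2$. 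From this, $\dim\cM_s = 2(s+1)-(d-1)$ for $s\ge d-2$.

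The main obstacle is the bookkeeping. I must check that the Eisenstein contribution is exactly $q^{2(s+1)-(d-1)}$ with no residual $(\phi,\psi)$-dependence, which should follow from the mass formula $\sum 1/\#\Aut = (q^d-1)/(q^2-1)(q-1)$. I must also pin down the precise degree bound $d-3$ for the cuspidal $L$-polynomial, including the correct parity and normalization between $\deg_\tau$ and $\deg \Nr$. I would first verify everything on the case $d=2$, where there are no cusp forms, and on $d=3$ as a check on the indices.
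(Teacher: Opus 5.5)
Your route is the paper's: a Brandt theta series as a $\G_0(\fp)$-invariant harmonic cochain, its Eisenstein plus cuspidal decomposition, and polynomiality of the cuspidal $L$-function. But the bookkeeping is off by one, and the step you leave as ``I expect this to translate into exact agreement'' is exactly where an idea is missing. Weil's $L$-function sums over all effective divisors, including those supported at $\infty$, so $L(\Theta_c,s)=(1-q^{-1-s})^{-1}\sum_{\fm\in A_+}c(\fm)\abs{\fm}^{-s}$. The polynomial of degree $d-3$ is this complete $L$-function, whereas $\sum_{\fm\in A_+}c(\fm)x^{\deg\fm}=(1-q^{-1}x)L(\Theta_c,x)$ has degree $d-2$. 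Hence the cuspidal contribution in degree $n$ vanishes only for $n\geq d-1$, not for $n>d-3$. Indeed, at $n=d-2$ the Eisenstein part alone, $(q^2-1)q^{d-2}(q^{d-1}-1)/(q^d-1)$, is generally not an integer ($18/7$ for $q=2$, $d=3$). There is also no shift between $s$ and $\deg\fN$ to absorb this, since $\deg_\tau u=\deg_T\fN(u)$. With the correct range and the evaluation $\sum_{\deg\fm=m}\sigma_\fp(\fm)=q^{2m-d+1}(q^d-1)/(q-1)$ for $m\geq d-1$, you obtain only the increment identity $\#\cM_s-\#\cM_{s-1}=q^{2s-d+1}(q^2-1)$ for $s\geq d-1$. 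Summing it gives $\#\cM_s=\#\cM_{d-2}+q^{2s-d+3}-q^{d-1}$. The term $\#\cM_{d-2}$ absorbs all the low-degree cuspidal contributions, which do not vanish individually, so no value of $\#\cM_s$ is determined yet.

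The paper closes this gap with an arithmetic observation. Since $\#\cM_s=q^{m_s}$, the increment identity reads $q^{m_{s-1}}(q^{m_s-m_{s-1}}-1)=q^{2s-d+1}(q^2-1)$. The second factor on each side is prime to $q$, so comparing $q$-power parts forces $m_{s-1}=2s-d+1$ and $m_s-m_{s-1}=2$ for every $s\geq d-1$. This is the formula for all $s\geq d-2$, obtained without knowing any low-degree Brandt entries. Alternatively, you could anchor at some $s\geq 2(d-1)$, where Theorem~\ref{thmA} gives $\#\cM_s=q^{2s-d+3}$, and run the exact recursion downward to $s=d-2$. Separately, the reduction in your first paragraph is wrong and should be dropped. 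For $r=2$ the formula is $\sum_k\max(0,\lfloor(s-e_k)/2\rfloor+1)$, which is stable once $s\geq e_{\max}-2$, so the relevant target is $e_{\max}\leq d$. The bound $e_{\max}\leq d-1$ you aim for is false: $\End(\phi)$ for $\phi_T=t+\tau^2$ with $d$ odd has minima $\{0,0,d,d\}$.
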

	
	\subsection*{Successive minima of \texorpdfstring{$\Hom(\phi, \psi)$}{Hom(phi,psi)}}
	
	The proof of Theorem~\ref{thmA} passes through a detailed analysis of $\Hom(\phi, \psi)$ as a normed $A$-lattice, and the structural results that emerge are, we believe, of independent interest. The norm $\norm{u}_D \colonequals q^{\deg_\tau(u)/r}$ extends to the unique ultrametric absolute value on the central division algebra $D_\infty$ of dimension $r^2$ and invariant $-1/r$ over the completion $\Fi$ of $F$ at $1/T$, and by a theorem of Taguchi~\cite[\S 4]{Taguchi}, $\Hom(\phi, \psi)$ admits an $A$-basis $u_1, \ldots, u_{r^2}$ realizing the successive minima of this lattice. Writing $e_k = \deg_\tau(u_k)$ with $e_1 \leq \cdots \leq e_{r^2}$, the multiset $\{e_k\}$ is an invariant of $\Hom(\phi, \psi)$ independent of the basis, and an SMB is characterized by the property of \emph{norm-orthogonality}: the ultrametric inequality $\norm{c_1 u_1 + \cdots + c_{r^2} u_{r^2}}_D = \max_k \abs{c_k}\norm{u_k}_D$ holds for all $c_k\in \Fi$, without cancellation at the maximum.
	
	Norm-orthogonality converts the count of $\cM_s$ into a sum of independent one-dimensional counts and yields a closed-form dimension formula valid for every $s\geq 0$:
	\begin{equation}\label{eq:intro-dimMs}
		\dim_{\F_q}\cM_s(\phi, \psi) = \sum_{k=1}^{r^2} \max\!\left(0,\ \left\lfloor\frac{s - e_k}{r}\right\rfloor + 1\right). 
	\end{equation}
	The stable formula of Theorem~\ref{thmA} is the case $s\geq e_{\max}(\La) - r + 1$ of \eqref{eq:intro-dimMs}.
	
	We also obtain three structural constraints on the multiset $\{e_k\}$, which together determine its behavior almost completely:
	\begin{enumerate}
		\item[(1)] $\sum_{k=1}^{r^2} e_k = r^2(r-1)d/2$ (Proposition~\ref{prop:sum-ek}), proved from the discriminant identity for the maximal $A$-order $\End(\phi)$ via the trace pairing;
		\item[(2)] $\{e_k \bmod r\}$ contains each residue class $0, 1, \ldots, r-1$ exactly $r$ times (Proposition~\ref{prop:uniform-residues}), reflecting the unramified filtration of the local maximal order in $D_\infty$;
		\item[(3)] when $\La = \End(\phi)$ with $\phi_T = t + g_1\tau + \cdots + g_r\tau^r$, the number of zero minima equals $\gcd\{i : g_i \neq 0\}$ (Lemma~\ref{lem:e1-zero}); in particular $e_1 = 0$, and for $r$ prime the first $r$ minima all vanish precisely when $\phi_T = t + g\tau^r$.
	\end{enumerate}
	Conjecture~\ref{conj:emax-bound} adds a fourth constraint, $e_{\max}(\La) \leq (r-1)d$. We prove it unconditionally for $r = 2$ (Corollary~\ref{cor:emax-r2}) by combining \eqref{eq:intro-dimMs} with Theorem~\ref{thmB}, and we verify it with equality for the family $\phi_T = t + \tau^r$ with $\gcd(r,d) = 1$ (Proposition~\ref{prop:phi-vanishing}). Constraints (1)--(3) alone (without the conjecture) already force the unconditional crude bound $e_{\max}(\La) \leq r^2(r-1)(d-1)/2 + (r-1)$ (Proposition~\ref{prop:emax-crude}), and this is the bound that yields Theorem~\ref{thmA}.
	
	These constraints are tight. In Section~\ref{sExample} we tabulate, for the supersingular rank-$2$ classes at two small primes ($q = 3$ with $d = 4$ and $d = 5$), the multisets $\{e_k\}$ that arise as $\phi$ varies among the isomorphism classes; in both cases, every multiset consistent with (1)--(3) and the bound $e_{\max}\leq (r-1)d$ is realized, and each multiset corresponds to a structurally distinguished class of supersingular Drinfeld modules characterized by an arithmetic condition on the parameter.
	
	\medskip
	
	Theorem~\ref{thmB} is proved by an entirely different route. We encode the dimensions in a Brandt-type theta series on the Bruhat--Tits tree of $\PGL_2(\Fi)$ whose decomposition into Eisenstein and cuspidal parts produces a recursion that determines $\dim_{\F_q}\cM_s$ exactly; the crucial input is the polynomiality of the cuspidal $L$-function, a consequence of deep results of Drinfeld, Deligne, and Grothendieck. A third, geometric perspective due to Mornev~\cite{Mornev} expresses $\dim_{\F_q}\cM_s$ in terms of the volume of $\Hom(\phi, \psi)$ as an $A$-lattice and the cohomology of an associated vector bundle on $\PP^1_{\F_q}$. Under this dictionary, Conjecture~\ref{conj:emax-bound} becomes a cohomology-vanishing statement (Corollary~\ref{cor:cohom-vanishing}), and the threshold of Theorem~\ref{thmB} is the range in which the relevant $H^1$ vanishes.
	
	Theorems~\ref{thmA} and \ref{thmB} have direct applications to the construction of rank-metric codes from supersingular Drinfeld modules. The shortfall of $\dim_{\F_q}\cM_s$ from the semifield value $r(s+1)$ is the constant $r(r-1)(d-1)/2$, independent of $s$, so the codes obtained from the embedding $\cM_s\hookrightarrow \Mat_r(\F_\fq)$ at $\deg(\fq) = s+1$ are asymptotically MRD: the ratio of the code dimension to the Singleton-bound dimension tends to $1$ as $s\to\infty$. More elaborate constructions exploit the precise dimension formula to produce rank-metric codes whose rate \emph{exceeds} the MRD ceiling, at the cost of an asymptotically vanishing fraction of rank-deficient codewords. These constructions will be developed in forthcoming joint work.
	
	\subsection*{Organization}
	
	Section~\ref{sSupersingular} sets up notation and collects the facts on supersingular Drinfeld modules used throughout. Section~\ref{sec:smb-stabilization} develops the successive-minima analysis and proves Theorem~\ref{thmA}. Section~\ref{sec:mornev-volume} gives the geometric reinterpretation via Mornev's volume and recasts the conjecture as cohomology vanishing on $\PP^1$. Section~\ref{secBrandt} carries out the automorphic argument and proves Theorem~\ref{thmB}. Section~\ref{sExample} presents explicit examples: a closed-form successive-minima basis for the case $\phi_T = t + \tau^r$ (recovering Theorem~\ref{thmA} for this family by elementary means via the Sylvester--Frobenius theorem), and a tabulation of pre-stable dimension tuples for the rank-$2$ supersingular classes at two small primes, in which the SMB multisets permitted by constraints (1)--(3) and the bound $e_{\max}\leq d$ are all realized.
	
	%==============================================
	
	\section{Supersingular Drinfeld modules}\label{sSupersingular}
	
	In this section we set up the notation used throughout the paper and collect the facts about supersingular Drinfeld modules that we will need. 
	
	\subsection{Notation and basic definitions}\label{ssNotation}
	We denote by $A=\F_q[T]$ the polynomial ring in indeterminate $T$ with coefficients in $\F_q$.  Let $A_+$
	denote the set of monic nonzero polynomials in $A$.
	Let $F=\F_q(T)$ be the fraction field of $A=\F_q[T]$.
	Given a nonzero ideal $\fn\lhd A$, we denote by the same symbol
	the unique monic generator of $\fn$. The \textit{primes} of $A$ are the maximal ideals of $A$.
	Given a prime $\fp\lhd A$, we write $\F_\fp=A/\fp$.
	
	Let $k$ be a finite field equipped with an $\F_q$-algebra homomorphism $\gamma\colon A\to k$. The \textit{$A$-characteristic of $k$} is
	$\chr_A(k)\colonequals \ker(\gamma)$; it is a prime of $A$. We denote $\fp=\chr_A(k)$ and $t=\gamma(T)$.
	
	Let $\twist{k}$ be the \textit{twisted polynomial ring}, i.e., the noncommutative
	polynomial ring in $\tau$ with coefficients in $k$, subject to the commutation rule $\tau a=a^q\tau$ for all $a\in k$. 
	For $u=a_h\tau^h+\cdots +a_n\tau^n\in \twist{k}$ with $0\leq h\leq n$, $a_h\neq 0$ and $a_n\neq 0$, the \textit{height} of $u$ is $\Ht(u)=h$ and the \textit{degree} of $u$ is $\deg_\tau(u)=n$.  
	The map $f=\sum_{i=0}^n a_i \tau^i \longmapsto f(x)=\sum_{i=0}^n a_i x^{q^i}$ gives a ring isomorphism between $\twist{k}$
	and the ring $\atwist{k}{x}$ of $\F_q$-linear polynomials, where multiplication on $\atwist{k}{x}$ is defined by composition.
	
	A \textit{Drinfeld module} of rank $r\geq 1$ over $k$ is an $\F_q$-algebra homomorphism
	\begin{align*}
		\phi\colon A &\To \twist{k}, \\
		a &\longmapsto \phi_a=\gamma(a)+g_1(a)\tau+\cdots +g_n(a)\tau^n,
	\end{align*}
	such that for $a\neq 0$ we have $n=\deg(a) r$ and $g_n(a)\neq 0$. Note that
	$\phi$ is uniquely determined by $\phi_T$, so to define a Drinfeld module over $k$
	one simply chooses $g_1, \dots, g_r\in k$ with $g_r\neq 0$ and sets $\phi_T=t+g_1\tau+\cdots+g_r\tau^r$.
	The \textit{height} of $\phi$ is defined as $H(\phi)=\Ht(\phi_\fp)/\deg(\fp)$; this is a positive
	integer satisfying $1\leq H(\phi)\leq r$ (see \cite[Lem.\ 3.2.11]{PapikianGTM}).
	
	Through $\phi$, $\bar{k}$ acquires an $A$-module structure, where $a\in A$ acts on $\beta\in \bar{k}$ by $a\ast \beta=\phi_a(\beta)$. Denote this module by ${^\phi}{\bar{k}}$. Given $0\neq a\in A$,
	the \textit{$a$-torsion submodule} $\phi[a]$ is the set of roots of $\phi_a(x)$; it is
	an $A$-submodule of ${^\phi}{\bar{k}}$.
	If $\fp$ does not divide $a$, then $\phi[a]\cong (A/aA)^r$; see \cite[Cor.\ 3.5.3]{PapikianGTM}.
	
	The group of \textit{morphisms $\phi\to \psi$} between two Drinfeld modules defined over $k$ is
	\begin{align*}
		\Hom_k(\phi, \psi) &=\{u\in \twist{k}\mid u\phi_a=\psi_au\text{ for all }a\in A\} \\
		& = \{u\in \twist{k}\mid u\phi_T=\psi_Tu\}.
	\end{align*}
	The \textit{endomorphism ring} of $\phi$ is
	$
	\End_k(\phi) = \Hom_k(\phi, \phi).
	$
	For simplicity, we denote $\Hom(\phi, \psi)=\Hom_{\bar{k}}(\phi, \psi)$ -- the group of all possible morphisms $\phi\to \psi$
	over the algebraic closure $\bar{k}$ of $k$.
	Given $u\in \Hom(\phi, \psi)$ and $a\in A$, we define $a\circ u=u\phi_a=\psi_au$.
	It is easy to check that $a\circ u \in \Hom(\phi, \psi)$, so $\Hom(\phi, \psi)$ is an $A$-module. It is known
	that $\Hom(\phi, \psi)$ is a free $A$-module of rank $\leq r^2$; see \cite[Thm. 3.4.1]{PapikianGTM}.
	A nonzero morphism $u\in \Hom(\phi, \psi)$ is called an \textit{isogeny}.
	An isogeny $u$ is an \textit{isomorphism} if it is invertible in $\twist{k}$, i.e., $u\in k^\times$.

	Given two Drinfeld modules $\phi$ and $\psi$ over $k$ and an isogeny $u\in \Hom(\phi, \psi)$, let
	\[
	\ker(u)\colonequals \{\alpha\in \bar{k}\mid u(\alpha)=0\},
	\]
	which is the set of distinct roots of the polynomial $u(x)\in \atwist{k}{x}$. This is a finite $A$-module, with the action of $A$ defined by $a\circ \beta  =\phi_a(\beta)$.
	For a finite $A$-module $M$ we define $\chi(M)$ as the product of the invariant factors of $M$, which are assumed to be monic.
	The \textit{norm} of $u$ is (cf. \cite[p. 194]{Gekeler91})
	\begin{equation}\label{eqNormu}
		\fN(u)=
		\fp^{\Ht(u)/\deg(\fp)}\cdot \chi(\ker u).
	\end{equation}
	By \cite[Prop. 3.3.4]{PapikianGTM},  $\deg(\fp)$ divides $\Ht(u)$. Thus, $\fN(u)$ is a monic
	polynomial in $A$. It is immediate from the definitions that
	\begin{equation}\label{eqtaudeg}
		\deg_\tau(u)=\deg_T\fN(u).
	\end{equation}

	\subsection{Supersingular Drinfeld modules}\label{ssSSDM}
	We now recall the definition of supersingularity and its consequences for the endomorphism ring.
	
	\begin{defn}
		A Drinfeld module $\phi$ over $k$ of rank $r$ is called
		\textit{supersingular} if it satisfies one of the following equivalent conditions
		(see \cite[Sec. 4]{Gekeler91}):
		\begin{enumerate}
			\item $H(\phi)=r$.
			\item $\phi[\fp]=0$.
			\item $\dim_F(\End(\phi)\otimes_A F)=r^2$.
		\end{enumerate}
	\end{defn}
	
	\begin{lem}\label{lemEndomSS}
		A supersingular Drinfeld module $\psi$ of rank $r$ over $\bar{k}$ is isomorphic to a supersingular
		Drinfeld module $\phi$ defined over the degree $r$ extension $\F_{\fp^r}$ of $\F_\fp$ and such that all endomorphisms
		of $\phi$ over $\bar{k}$ are already defined over $\F_{\fp^r}$.
	\end{lem}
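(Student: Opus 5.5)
We prove Lemma~\ref{lemEndomSS} by reducing to a Drinfeld module of the form $\phi_\fp=c\tau^{rd}$ with $c\in\F_q^\times$ (equivalently, after a further scaling, $c=1$), and then checking which endomorphisms commute with the Frobenius $\tau^{rd}$. Throughout, $d=\deg\fp$.

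\emph{Step 1: a model over $\F_{\fp^r}$.} Since $\psi$ is supersingular, $H(\psi)=r$. Hence $\Ht(\psi_\fp)=rd$, and $\deg_\tau\psi_\fp=rd$ as well. So $\psi_\fp=b\tau^{rd}$ for some $b\in\bar k^\times$.

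Choose $c\in\bar k^\times$ with $c^{q^{rd}-1}=b^{-1}$ and set $\phi=c\psi c^{-1}$. This $\phi$ is isomorphic to $\psi$, and
\[
\phi_\fp=c\,b\,\tau^{rd}c^{-1}=b\,c^{1-q^{rd}}\tau^{rd}=\tau^{rd}.
\]
Now $\phi_T$ commutes with $\phi_\fp=\tau^{rd}$, since $\phi_T\phi_\fp=\phi_{T\fp}=\phi_\fp\phi_T$. Write $\phi_T=\sum g_i\tau^i$. Comparing coefficients in $\tau^{rd}\phi_T=\phi_T\tau^{rd}$ gives $g_i^{q^{rd}}=g_i$ for every $i$. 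The field $\F_{q^{rd}}$ is the degree $r$ extension $\F_{\fp^r}$ of $\F_\fp\cong\F_{q^d}$, so all $g_i$ lie in $\F_{\fp^r}$, and $\phi$ is defined over $\F_{\fp^r}$.

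\emph{Step 2: endomorphisms are defined over $\F_{\fp^r}$.} Let $u=\sum a_i\tau^i\in\End(\phi)$ over $\bar k$. Then $u$ commutes with $\phi_\fp=\tau^{rd}$, and the same coefficient comparison gives $a_i^{q^{rd}}=a_i$. Hence $u\in\twist{\F_{\fp^r}}$.

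\emph{Main obstacle.} The only genuine input is in Step 1: deducing from supersingularity that $\psi_\fp$ is a single monomial $b\tau^{rd}$. This holds because height and degree coincide at $rd$, which is exactly the definition $H(\psi)=r$ combined with the rank condition $\deg_\tau\psi_a=r\deg a$. Finding $c$ is unproblematic because $\bar k$ is algebraically closed.
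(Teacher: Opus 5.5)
Your argument is essentially the paper's: conjugate $\psi$ by a scalar so that $\phi_\fp=\tau^{rd}$, then compare coefficients in $\tau^{rd}u=u\tau^{rd}$ for $u=\phi_T$ and for an arbitrary endomorphism $u$. The only difference is in the normalization. The paper makes $\phi_T$ monic via a $(q^r-1)$-th root of $g_r$, which forces $\phi_\fp$ to be monic and hence equal to $\tau^{rd}$ by supersingularity; you normalize $\phi_\fp$ directly, which makes the use of supersingularity more explicit.

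One slip to fix: since $c\,b\tau^{rd}c^{-1}=b\,c^{1-q^{rd}}\tau^{rd}$, you need $c^{q^{rd}-1}=b$, not $b^{-1}$. Your stated choice gives $b^2\tau^{rd}$.
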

	\begin{proof} The following argument is very similar to the argument of the proof of \cite[Prop. 4.2]{Gekeler91}.
		
		Suppose $\psi_T=t+g_1\tau+\cdots+g_r\tau^r$. Let $\alpha$ be a fixed $(q^r-1)$-th root of $g_r$ and
		let $\phi=\alpha \psi\alpha^{-1}$. Then $\phi_T=t+f_1\tau+\cdots+\tau^r$, and $\phi_\fp=\tau^{rd}$. Let $u=u_0+u_1\tau+\cdots+u_s\tau^s$
		be an endomorphism of $\phi$ over $\bar{k}$. The relation $\phi_\fp u=u \phi_\fp$
		implies $u_i^{q^{rd}}=u_i$ for all $0\leq i\leq s$. Thus, $u\in \twist{\F_{\fp^r}}$. Since every $\phi_a$ lies in $\End(\phi)$, we conclude that
		$\phi$ itself is defined over $\F_{\fp^r}$.
	\end{proof}
	
	The previous lemma implies that the number of supersingular Drinfeld modules of rank $r$ over $\bar{k}$, up to isomorphism, is finite.
	Moreover, it is known that any two supersingular Drinfeld modules of rank $r$ over $\bar{k}$ are isogenous; see \cite[Lem. 4.4.3]{PapikianGTM}.
	
	The endomorphism ring of a supersingular Drinfeld module enjoys the following
	maximality property, due to Gekeler \cite[Thm.\ 4.3]{Gekeler91}:
	
	\begin{thm}\label{thmGekelerMaxOrder}
		Let $\phi$ be a supersingular rank $r$ Drinfeld module over $\bar{k}$.
		\begin{enumerate}
			\item $D=\End(\phi)\otimes_A F$ is a central division algebra over $F$ of dimension $r^2$
			with invariants $1/r$ and $-1/r$ at $\fp$ and $\infty$, respectively, and $0$ at all other places.
			\item $\End(\phi)$ is a maximal $A$-order in $D$.
			\item The left ideal classes of $\End(\phi)$
			are in bijection with the isomorphism classes of supersingular rank $r$
			Drinfeld modules in characteristic $\fp$.
		\end{enumerate}
	\end{thm}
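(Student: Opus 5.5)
The statement has three parts, and I would prove them in order, reducing first to the normalized model of Lemma~\ref{lemEndomSS}. By that lemma we may assume $\phi_T=t+f_1\tau+\cdots+\tau^r$ with $\phi_\fp=\tau^{rd}$, and that every endomorphism is defined over $\F_{\fp^r}$. Then $\tau^{rd}=\phi_\fp$ is the Frobenius of $\F_{\fp^r}$, and it commutes with all of $\twist{\F_{\fp^r}}$. So the Frobenius lies in $A$ and is central. By the Tate-type description of endomorphisms over a finite field, $D=\End(\phi)\otimes_A F$ is then the centralizer of $F(\pi)=F$ in an algebra of dimension $r^2$ over its center. Condition (3) of supersingularity already gives $\dim_F D=r^2$, and the centralizer statement shows the center is $F$.

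\textbf{Part (1): $D$ is a division algebra with the stated invariants.} Since $\twist{\bar k}$ has no zero divisors, $D$ is a division algebra. Its invariants I would compute place by place.
\begin{itemize}
\item At a place $v\neq\fp,\infty$, the $v$-adic Tate module gives an injection $\End(\phi)\otimes A_v\hookrightarrow \End_{A_v}(T_v\phi)\cong \Mat_r(A_v)$. Both sides have rank $r^2$, so after tensoring with $F_v$ we get $D_v\cong\Mat_r(F_v)$, and the invariant is $0$.
\item At $\infty$, I would use $\deg_\tau$. It defines a valuation on $D$, since $\deg_\tau(uv)=\deg_\tau u+\deg_\tau v$, and it extends to $D_\infty$, embedded in a skew Laurent series field in $\tau^{-1}$. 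On $F_\infty$ it takes values in $r\Z$, because $\deg_\tau\phi_a=r\deg a$. On $D$ it takes the value $1$, for instance on the divisor $\tau$ of $\phi_T-t$ in the completion. So $D_\infty$ has ramification index $r$ and is a division algebra of index $r$. The commutation rule $\tau a\tau^{-1}=a^q$ on the residue field $\F_{q^r}$ pins its invariant down as $-1/r$.
\item At $\fp$, the sum of invariants must be $0$, which forces the invariant $1/r$.
\end{itemize}

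\textbf{Part (2): $\End(\phi)$ is a maximal $A$-order.} I would check maximality locally.
\begin{itemize}
\item At $v\ne\fp$, the Tate conjecture for Drinfeld modules over finite fields (Taguchi, Tamagawa) makes $\End(\phi)\otimes A_v\to\End_{A_v}(T_v\phi)$ an isomorphism, so the image is the maximal order $\Mat_r(A_v)$.
\item At $v=\fp$, the completion $D_\fp$ is a division algebra, so its unique maximal order is $\{x: \Nr(x)\in A_\fp\}$. I must show that $u\in D$ with integral reduced norm at $\fp$, and integral at all other places, already lies in $\End(\phi)$. Writing $u=w/a$ with $w\in\End(\phi)$ and $a\in A$, the task is to show $w$ factors through $\phi_a$ on the right. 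At primes away from $\fp$ this follows from $T_v$-integrality. At $\fp$ itself I would argue with heights: $\phi_\fp=\tau^{rd}$, and $\Ht(w)$ is controlled by $\ord_\fp\Nr(w)$ via \eqref{eqNormu}. The integrality of $\Nr(u)$ at $\fp$ forces $\Ht(w)\ge rd\cdot\ord_\fp(a)$, so $\tau^{rd\,\ord_\fp a}$ divides $w$ on the right.
\end{itemize}
This $\fp$-adic step is the main obstacle: I need to match $\deg_\tau$, heights and the reduced norm precisely. I would first try $\fN(w)^r=\Nr(w)$ up to a unit, using \eqref{eqtaudeg}.

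\textbf{Part (3): the bijection with left ideal classes.} Using that all supersingular modules are isogenous, I would send $\psi\mapsto I_\psi=\Hom(\psi,\phi)$. Composition makes this a left $\End(\phi)$-module, and it sits inside $D$ as a lattice by fixing one isogeny; isomorphic $\psi$ give ideals differing by right multiplication by a unit of $D$. For the inverse, given an integral left ideal $I$ (after scaling), I would set $\psi=\phi/\phi[I]$, where $\phi[I]=\bigcap_{u\in I}\ker u$ together with the appropriate infinitesimal part. Then I would verify $\Hom(\psi,\phi)=I$ by a local computation. This uses that every left ideal of a maximal order is locally principal: at $v\ne\fp$ via Tate modules, and at $\fp$ because left ideals there are powers of the unique maximal ideal, matching heights $\tau^{d}$-powers. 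The two constructions are then inverse up to isomorphism.
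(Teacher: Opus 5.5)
The paper does not prove this theorem; it quotes it from Gekeler \cite[Thm.~4.3]{Gekeler91}. Your outline is the standard Deuring-style route: Tate modules away from $\fp$, the $\tau^{-1}$-adic completion at $\infty$, heights at $\fp$, and kernel ideals for (3). As written, though, it has a genuine gap in (3).

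The kernel-ideal computation shows that $\Hom(\psi,\phi)\circ f=I$ for $f\colon\phi\to\psi=\phi/\phi[I]$ (not $\Hom(\psi,\phi)=I$). That gives surjectivity onto ideal classes. Injectivity is hidden in your sentence ``the two constructions are then inverse''. It requires $\psi\cong\phi/\phi[I_\psi]$, i.e.\ that the isogenies $\psi\to\phi$ have trivial common scheme-theoretic kernel, and this is not formal. If every isogeny $\psi\to\phi$ had height $\geq d$, all of them would factor through $\tau^d\colon\psi\to\psi^{(q^d)}$. Then $\psi$ and its Frobenius twist $\psi^{(q^d)}$, which are typically non-isomorphic, would have literally the same ideal.

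Away from $\fp$, Tate's theorem for $\Hom(\psi,\phi)$ handles the étale part. At $\fp$ you must produce a separable isogeny $\psi\to\phi$. One way uses (2):
\begin{enumerate}
\item $\End(\psi)\otimes A_\fp$ is maximal, so by approximation $\End(\psi)$ contains some $\pi_0$ with $\ord_\fp\Nr(\pi_0)=1$, i.e.\ $\Ht(\pi_0)=d$.
\item Writing $\pi_0=s\tau^d$ gives a separable isogeny $s\colon\psi^{(q^d)}\to\psi$. Its dual (via $\psi^{(q^d)}_a$ with $\fp\nmid a$) is also separable.
\item Iterating, each twist $\psi^{(q^{dm})}$ is separably isogenous to $\psi$. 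Stripping the factor $\tau^{dm}$ from any isogeny $\psi\to\phi$ then yields a separable one.
\end{enumerate}

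Two other steps also need repair.

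\textbf{At $\infty$.} The element $\tau$ is not in the image of $D_\infty$ in $\bar k((\tau^{-1}))$. Everything in that image commutes with $\phi_T$, and $\tau$ generally does not; nor is $\phi_T-t$ an endomorphism. So you have not shown that the value $1$ occurs. Argue instead as follows. $D_\infty=D\otimes_F\Fi$ is central simple, so the ring map $D_\infty\to\bar k((\tau^{-1}))$ is injective. The target is a skew field, so $D_\infty$ is a division algebra of index $r$, and $e=f=r$ follows. A uniformizer then has leading term $c\tau^{-1}$, hence acts on $\F_{q^r}$ by $x\mapsto x^{q^{-1}}$, giving invariant $-1/r$.

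\textbf{At $\fp$.} Your tentative normalization $\fN(w)^r=\Nr(w)$ is wrong. It contradicts $\deg\fN(w)=\deg_\tau(w)=\deg\Nr(w)$, which follows from \eqref{eqtaudeg} and the valuation at $\infty$. It would also give only $\Ht(w)\geq d\,\ord_\fp(a)$, which is too weak to split off $\phi_{\fp^{\ord_\fp a}}=\tau^{rd\,\ord_\fp a}$.

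The correct statement is $(\fN(w))=(\Nr(w))$, which is Gekeler's Lemma 3.10 as cited in the paper. For $v\neq\fp$, $\ord_v\Nr(w)=\ord_v\det(T_vw)$ is the $v$-part of $\chi(\ker w)$, and the degree equality then forces the $\fp$-parts to agree. Since $\phi[\fp]=0$, $\chi(\ker w)$ is prime to $\fp$, so $\ord_\fp\Nr(w)=\Ht(w)/d$. This is exactly what your inequality $\Ht(w)\geq rd\,\ord_\fp(a)$ needs, and with it your argument for (2) goes through.
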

	
	\subsection{Supersingular Drinfeld modules of rank \texorpdfstring{$2$}{2}}\label{ssSSrank2}
	In this subsection we record several results that are special to the rank-$2$ case. These are not used in the general development of Sections \ref{sec:smb-stabilization}--\ref{sec:mornev-volume}, but they will be needed for the explicit computations carried out in Section \ref{sExample}.
	
	We first recall that given a Drinfeld module of rank $2$ defined by
	\[
	\phi_T=t+g\tau+\Delta\tau^2,
	\]
	its \textit{$j$-invariant} is $j(\phi)=g^{q+1}/\Delta$. It is easy to check that two Drinfeld modules $\phi, \psi$ of rank $2$ are isomorphic over $\bar{k}$ if and only if $j(\phi)=j(\psi)$.
	
	\begin{lem}\label{lemAut}
		Let $\phi$ be a Drinfeld module of rank $2$ over $\bar{k}$. Then
		\[
		\Aut(\phi) =
		\begin{cases}
			\F_q^\times & \text{if } j(\phi) \neq 0, \\
			\F_{q^2}^\times & \text{if } j(\phi) = 0.
		\end{cases}
		\]
	\end{lem}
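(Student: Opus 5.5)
Since $\phi$ is defined over $\bar{k}$, an automorphism of $\phi$ is an isogeny $u\in\twist{\bar{k}}$ that is invertible, so $u=c\in\bar{k}^\times$ is a constant. The plan is to turn the condition $c\,\phi_T=\phi_T\,c$ into equations on the coefficients and then solve them. Write $\phi_T=t+g\tau+\Delta\tau^2$ with $\Delta\neq 0$. By the commutation rule $\tau c=c^q\tau$ we have
\[
\phi_T\, c = tc + g c^q\tau + \Delta c^{q^2}\tau^2,\qquad c\,\phi_T = ct + cg\tau + c\Delta\tau^2 .
\]
Comparing coefficients, $c\in\Aut(\phi)$ if and only if $g(c^q-c)=0$ and $\Delta(c^{q^2}-c)=0$. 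The second condition, with $\Delta\neq0$ and $c\neq0$, says $c^{q^2-1}=1$, i.e.\ $c\in\F_{q^2}^\times$.

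We now split according to the $j$-invariant. If $j(\phi)\neq 0$, then $g\neq 0$, so the first condition gives $c^q=c$, i.e.\ $c\in\F_q^\times$. Conversely, every $c\in\F_q^\times$ satisfies both conditions, so $\Aut(\phi)=\F_q^\times$. If $j(\phi)=0$, then $g=0$ and the first condition is vacuous, so $\Aut(\phi)=\F_{q^2}^\times$. Note that $\F_{q^2}\subset\bar{k}$ because $\bar{k}$ is algebraically closed, so these constants really are available.

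Finally, $\Aut(\phi)$ is an isomorphism invariant up to conjugation, and $j$ is an isomorphism invariant. So nothing depends on the chosen model of $\phi$, and no normalization is needed. No step here is a real obstacle; the only point needing care is the twisted commutation $\tau c=c^q\tau$ in the coefficient comparison.
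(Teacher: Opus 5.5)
Your proof is correct and follows essentially the same route as the paper's. Both reduce $c\,\phi_T=\phi_T\,c$ to $cg=gc^q$ and $c\Delta=\Delta c^{q^2}$, use $\Delta\neq 0$ to force $c\in\F_{q^2}^\times$, and then use $g\neq 0 \iff j(\phi)\neq 0$ to cut down to $\F_q^\times$.
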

	
	\begin{proof} %See \cite[Cor. 3.8.6]{PapikianGTM}.
		Write $\phi_T = t + g\tau + \Delta\tau^2$. An element $c \in \bar{k}^\times$ is an automorphism of $\phi$ if and only if $c\phi_T = \phi_T c$, i.e., $cg = gc^q$ and $c\Delta = \Delta c^{q^2}$. The second condition forces $c \in \F_{q^2}^\times$; when $g \neq 0$ (equivalently $j(\phi) \neq 0$), the first refines this to $c \in \F_q^\times$.
	\end{proof}
	
	\begin{defn}\label{defHp}
		Consider the Drinfeld module
		\begin{equation}\label{eqLegendre}
			\phi_T=t+\tau+x \tau^2
		\end{equation}
		over $\F_\fp[x]$, where $x\neq 0$ is an indeterminate. (The structure map $\gamma\colon A\to \F_\fp[x]$ is the composition $A\to \F_\fp\hookrightarrow \F_\fp[x]$.) Let $H_\fp(x)\in \F_\fp[x]$ be the coefficient of $\tau^d$ in $\phi_\fp$ computed in $\twist{\F_\fp[x]}$.
		These polynomials are Gekeler's Deuring polynomials \cite{Gekeler83}, differently normalized.
	\end{defn}
	
	\begin{thm}\hfill
		\begin{enumerate}
			\item	The polynomial $H_\fp(x)$ has degree
			\[
			\begin{cases}
				\frac{q^d-1}{q^2-1}, & \text{if $d$ is even}, \\
				\frac{q^d-q}{q^2-1} , & \text{if $d$ is odd}.
			\end{cases}
			\]
			\item The polynomial $H_\fp(x)$ splits over $\F_\fp$ into a product of distinct linear factors and irreducible
			quadratics.
		\end{enumerate}
	\end{thm}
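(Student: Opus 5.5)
We study the coefficients $g_i(x)\in\F_\fp[x]$ of $\phi_\fp=\sum_{i=0}^{2d} g_i(x)\tau^i$ through the recursion coming from $\phi_{T a}=\phi_T\phi_a$. For (1), the plan is to write $\fp=T^d+\cdots$ and expand $\phi_\fp$ as a combination of the powers $\phi_T^n$. In the free twisted polynomial ring, the coefficient of $\tau^m$ in $\phi_T^n=(t+\tau+x\tau^2)^n$ is a sum over words in the letters $t,\tau,x\tau^2$. Each letter $x\tau^2$ placed with $j$ letters of $\tau$-weight before it contributes $x^{q^j}$. So $x$ enters with exponents that are sums of distinct-ish powers $q^{j}$.

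To pin down $\deg H_\fp=\deg g_d$, I track the maximal $x$-degree term. A word of total $\tau$-weight $d$ that uses $a$ factors $x\tau^2$ and $d-2a$ factors $\tau$ gives $x$-degree $\sum q^{j_i}$, where the $j_i$ are the weights preceding each $x\tau^2$. This sum is maximized by putting all the $\tau$'s first and stacking the $x\tau^2$'s last, i.e.\ $j_i\in\{d-2,d-4,\dots\}$. The maximum is then $q^{d-2}+q^{d-4}+\cdots$, which equals $(q^d-1)/(q^2-1)$ for $d$ even and $(q^d-q)/(q^2-1)$ for $d$ odd.

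It remains to check that the coefficient of this top monomial is nonzero. Only words with no letter $t$ contribute to $\tau^d$ at full length. Since the $\tau$-weight is $d=\deg\fp$, only the leading term $T^d$ of $\fp$ (with $n=d$) can reach weight $d$ with maximal $x$-power. The lower $T^n$ also give weight-$d$ terms, but these use $t$-letters or fewer $x$'s, and I would verify they have strictly smaller $x$-degree. I also need to show the maximizing word is unique up to the constant factors. This is a combinatorial uniqueness claim: the exponent $\sum q^{j_i}$ determines the multiset of the $j_i$, by base-$q$ expansion when $a<q$. The general case needs care, since carries could make distinct configurations collide, and this is the point I expect to be the main obstacle. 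A cleaner alternative is to invoke Gekeler's result \cite{Gekeler83} for his normalization of the Deuring polynomial, and transport it via $j=1/x$, since $j(\phi)=g^{q+1}/\Delta=1/x$. This gives $\deg H_\fp$ as the number of supersingular $j\neq0$ counted appropriately.

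For (2), I use the moduli interpretation. The module $\phi$ with parameter $x_0\in\bar\F_\fp^\times$ is supersingular iff $\phi[\fp]=0$, iff $\Ht(\phi_\fp)=2d$. Since the height is always a multiple of $d$, this happens iff the coefficient of $\tau^d$ vanishes, i.e.\ $H_\fp(x_0)=0$.

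By Lemma~\ref{lemEndomSS}, every supersingular module is defined over $\F_{\fp^2}$ up to isomorphism. Hence its $j$-invariant, and therefore $x_0=1/j$, lies in $\F_{\fp^2}$. So every root of $H_\fp$ lies in $\F_{\fp^2}$, and the irreducible factors over $\F_\fp$ are linear or quadratic.

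For distinctness of the roots, I would show $H_\fp$ is separable. One route is a Hasse-invariant or deformation argument: the supersingular locus is reduced, because the Hasse invariant vanishes to order one. The alternative is to compare degrees. Count the isomorphism classes with $j\neq 0$ via the mass formula from Theorem~\ref{thmGekelerMaxOrder}(3) and Lemma~\ref{lemAut}, and check that this count equals the degree from (1). Then $H_\fp$ has that many distinct roots, so it is squarefree. I would take the counting route, with the mass formula $\sum 1/\#\Aut=(q^d-1)/(q^2-1)$ up to standard normalization.
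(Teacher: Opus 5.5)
Your argument for the shape of the factorization in (2) is the paper's argument. The roots of $H_\fp$ are the values $1/j$ for supersingular $j\neq 0$, by the Legendre model and the height criterion, and these lie in $\F_{\fp^2}$ by Lemma~\ref{lemEndomSS}.

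\textbf{The gap is in your direct computation for (1).} A word in $\phi_T^n$ with $a$ letters $x\tau^2$, $b$ letters $\tau$ and $c$ letters $t$ contributes to $\tau^d$ only if $2a+b=d$ and $a+b+c=n$. For $n=d$ this forces $c=a$, so the claim that only $t$-free words contribute is wrong. More seriously, a letter $t$ has $\tau$-weight $0$. Inserting $t$'s does not shift the positions $j_i$ of the $x\tau^2$ letters, so it does not lower the $x$-exponent. Hence every $T^n$ with $\lceil d/2\rceil\le n\le d$ has words reaching the top monomial $x^N$, where $N=q^{d-2}+q^{d-4}+\cdots$. The verification you defer (``strictly smaller $x$-degree'') would therefore fail.

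The top coefficient is $\sum_n p_n\,h_{n-m}(y_0,\dots,y_m)$, where:
\begin{itemize}
\item $\fp=\sum_n p_nT^n$ and $m=\lceil d/2\rceil$;
\item the $y_k$ are the conjugates $t^{q^j}$ sitting in the gaps of the unique maximizing skeleton;
\item $h$ denotes the complete homogeneous symmetric polynomials.
\end{itemize}
This is a divided difference of $\fp$ at Frobenius conjugates of $t$, with $t$ repeated, and showing it is nonzero is a genuine computation. For $d$ even it equals $\prod_{i\text{ odd},\,i<d}(t-t^{q^i})$; in Example~\ref{Example1} this is $(t-t^3)(t-t^{27})=t^{64}$. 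By contrast, the ``carries'' issue you single out is not an obstacle. Since $j_{i+1}\ge j_i+2$, the $j_i$ are distinct, so $\sum_i q^{j_i}$ is a $0/1$ base-$q$ expansion and determines $\{j_i\}$.

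\textbf{A repair using your own ingredients.} You do not need the leading coefficient at all. The word analysis does prove $\deg H_\fp\le N$. Now use the mass formula $\sum_i (q-1)/\#\Aut(\phi_i)=(q^d-1)/(q^2-1)$, together with Lemma~\ref{lemAut} and the fact that $j=0$ is supersingular if and only if $d$ is odd (Lemma~\ref{lemDSpecial}). You need that parity fact to split off the $j=0$ term, but you do not mention it. These show there are exactly $N$ supersingular $j\neq 0$, each giving a distinct root $1/j$ of $H_\fp$. Hence $N\le\#\{\text{roots}\}\le\deg H_\fp\le N$, which gives the degree formula and squarefreeness at the same time.

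This is a legitimate alternative to the paper. The paper imports both the degree and the distinctness of roots from \cite{Gekeler83}, adding only $j\in\F_{\fp^2}$ and the parity criterion for $j=0$. If you take your stated fallback of citing Gekeler for (1), you are following the paper, and the separate counting for separability becomes redundant. Note also that the mass formula is an external input (Gekeler's mass formula). It does not follow from Theorem~\ref{thmGekelerMaxOrder}(3) alone.
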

	\begin{proof}
		The roots of $H_\fp(x)$ are the reciprocals of non-zero supersingular $j$-invariants in $\oF_\fp$.
		Since all such $j$'s are in $\F_{\fp^2}$, the claim follows from the fact that $j=0$ is supersingular
		if and only if $d$ is odd (see Lemma \ref{lemDSpecial}) and the results in \cite{Gekeler83} (reproduced in English in \cite[pp. 266-267]{PapikianGTM}).
	\end{proof}
	
	\begin{example}\label{Example1}
		Let $q=3$ and $\frak p(T)=T^4+T+2$. Then
		\[
		\begin{aligned}
			H_\fp(x)=\;&t^{64}x^{10}+t^{6}x^{9}+t^{15}x^{3}+t^{58}x+1\\
			=\;&t^{64}(x+t^{50})(x+t^{62})(x+t^{66})(x+t^{78})\\
			&\cdot (x^2+t^{3}x+t^{8})(x^2+t^{12}x+t^{12})(x^2+t^{33}x+t^{60}).
		\end{aligned}
		\]
	\end{example}
	
	\begin{prop}\label{lemLegendre}
		Let $x_0, y_0\in \F_{\fp^2}$ be two roots of $H_\fp(x)$, and let
		$\phi_T=t+\tau+x_0\tau^2$ and $\psi_T=t+\tau+y_0\tau^2$
		be the corresponding supersingular Drinfeld modules defined over $\F_{\fp^2}$.
		Denote $\alpha = \Nr_{\F_{\fp^2}/\F_{q^2}}(x_0)$ and $\beta=\Nr_{\F_{\fp^2}/\F_{q^2}}(y_0)$.
		\begin{enumerate}
			\item The elements $\alpha$ and $\beta$ lie in $\F_q$.
			\item All morphisms $\phi\to \psi$ are defined over $\F_{\fp^2}$ if and only if $\alpha=\beta$. In particular, all
			endomorphisms of $\phi$ are defined over $\F_{\fp^2}$.
			\item In general, all morphisms $\phi\to \psi$ are defined over the $(q-1)$-th degree extension of $\F_{\fp^2}$, i.e., over $\F_{q^{2d(q-1)}}$.
		\end{enumerate}
	\end{prop}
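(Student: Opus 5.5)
The whole argument rests on one formula: $\phi_\fp=\alpha\,\tau^{2d}$ and $\psi_\fp=\beta\,\tau^{2d}$. Parts (1)--(3) then follow by comparing coefficients in the relations $\phi_\fp\phi_T=\phi_T\phi_\fp$ and $\psi_\fp u=u\phi_\fp$.

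\emph{The key formula.} Since $\phi$ is supersingular of rank $2$, we have $H(\phi)=2$. So $\Ht(\phi_\fp)=2d$, while $\deg_\tau(\phi_\fp)=2d$ as well. Hence $\phi_\fp$ is a single monomial $N\tau^{2d}$. I will compute $N$ as the leading coefficient of $\phi_\fp$. Write $\fp=T^d+\cdots$, which is monic. The leading coefficient of $\phi_T^d$ is
\[
x_0\cdot x_0^{q^2}\cdots x_0^{q^{2(d-1)}}=x_0^{(q^{2d}-1)/(q^2-1)},
\]
and lower powers of $\phi_T$ do not reach $\tau^{2d}$. This exponent is exactly the norm from $\F_{\fp^2}=\F_{q^{2d}}$ to $\F_{q^2}$. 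Thus $\phi_\fp=\alpha\tau^{2d}$, and likewise $\psi_\fp=\beta\tau^{2d}$. The only point needing care here is the height statement, which is condition (1) in the definition of supersingularity.

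\emph{Part (1).} The element $\phi_\fp$ commutes with $\phi_T=t+\tau+x_0\tau^2$. Compare the coefficients of $\tau^{2d+1}$ in $\alpha\tau^{2d}\phi_T$ and $\phi_T\alpha\tau^{2d}$. Because $\tau$ has coefficient $1$, this gives $\alpha=\alpha^q$. So $\alpha\in\F_q$, and similarly $\beta\in\F_q$.

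\emph{Part (2).} Let $u=\sum u_i\tau^i\in\Hom(\phi,\psi)$. Then $u$ commutes with the $A$-action, so $\psi_\fp u=u\phi_\fp$, that is, $\beta\tau^{2d}u=u\,\alpha\tau^{2d}$. Comparing coefficients gives
\[
\beta\,u_i^{q^{2d}}=\alpha\,u_i\qquad\text{for all } i.
\]
If $\alpha=\beta$, then $u_i^{q^{2d}}=u_i$, so every $u_i$ lies in $\F_{q^{2d}}=\F_{\fp^2}$.

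For the converse, recall that any two supersingular Drinfeld modules are isogenous \cite[Lem.\ 4.4.3]{PapikianGTM}, so there is a nonzero $u$. If this $u$ is defined over $\F_{\fp^2}$, pick any $u_i\neq 0$. The displayed relation becomes $\beta u_i=\alpha u_i$, hence $\alpha=\beta$. The special case $\psi=\phi$ gives the statement about endomorphisms.

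\emph{Part (3).} In general, set $\gamma=\alpha/\beta\in\F_q^\times$. For each nonzero coefficient we get $u_i^{q^{2d}-1}=\gamma$, hence $u_i^{(q^{2d}-1)(q-1)}=1$. It remains to check that $(q^{2d}-1)(q-1)$ divides $q^{2d(q-1)}-1$. Put $Q=q^{2d}$. Then
\[
q^{2d(q-1)}-1=(Q-1)\bigl(1+Q+\cdots+Q^{q-2}\bigr).
\]
Since $Q\equiv 1\pmod{q-1}$, the second factor is congruent to $q-1\equiv 0\pmod{q-1}$. Therefore each $u_i$ lies in $\F_{q^{2d(q-1)}}$, which is the degree-$(q-1)$ extension of $\F_{\fp^2}$.
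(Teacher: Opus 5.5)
Your proof is correct and follows the paper's argument: you use the monomial form $\phi_\fp=\alpha\tau^{2d}$, compare $\tau^{2d+1}$-coefficients for (1), and compare coefficients in $\psi_\fp u=u\phi_\fp$ for (2) and (3). You are more explicit than the paper in two places: you justify the monomial form via $H(\phi)=2$, and you invoke the existence of a nonzero isogeny for the converse in (2). In (3) you prove only the divisibility the statement needs, whereas the paper also shows that $\F_{q^{2d(q-1)}}$ is minimal. One small point: your relation $\beta u_i^{q^{2d}}=\alpha u_i$ tacitly uses $\alpha^{q^i}=\alpha$, which comes from part (1).
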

	\begin{proof}
		Comparing
		the coefficient of $\tau^{2d+1}$ on both sides of $\phi_T\phi_\fp=\phi_\fp\phi_T$, we get $\alpha^q=\alpha$.
		Thus, $\alpha\in \F_q$. Similarly, $\beta\in \F_q$. This proves (1).
		
		Let $u=u_0+u_1\tau+\cdots+u_s\tau^s$ be a morphism $\phi\to \psi$.
		From $u \phi_\fp = \psi_\fp u$ we get $\alpha u_i = \beta u_i^{q^{2d}}$ for all $0\leq i\leq s$, because $\alpha, \beta\in \F_q$.
		Thus,  $u_i = u_i^{q^{2d}}$ for all $0\leq i\leq s$ if and only if $\alpha=\beta$. This proves (2).
		
		In general, when $\alpha$ is not necessarily equal to $\beta$, for $u_i\neq 0$ we get $u_i^{(q^{2d}-1)(q-1)}=1$.
		We claim that if $b\in \oF_q$ satisfies $b^{(q^{2d}-1)(q-1)}=1$,
		then $b \in \F_{q^{2d(q-1)}}$, and moreover $\F_{q^{2d(q-1)}}$ is the smallest extension of
		$\F_q$ containing all such~$b$.
		
		Set $N = (q^{2d}-1)(q-1)$.  We need to find the smallest $n$ such that
		$N \mid q^n - 1$. A necessary condition is
		$(q^{2d}-1) \mid (q^n-1)$, which holds if and only if $2d \mid n$.
		Write $n = 2dm$ for some positive integer~$m$.  Then $N \mid (q^{2dm}-1)$ if and only if $(q-1)$ divides
		\[
		\frac{q^{2dm}-1}{q^{2d}-1} = 1 + q^{2d} + q^{4d} + \cdots + q^{2d(m-1)}.
		\]
		Since each term $q^{2dj}$ is congruent to $1$ modulo $q-1$, we have
		\[
		\frac{q^{2dm}-1}{q^{2d}-1} \equiv m \pmod{q-1}.
		\]
		The smallest positive $m$ divisible by $q-1$ is $q-1$, giving $n = 2d(q-1)$.
	\end{proof}
	
	\begin{rem}
		In Example \ref{Example1}, all
		Drinfeld modules defined over $\F_{\fp}$ are isogenous over $\F_{\fp^2}$, but those Drinfeld modules which are defined over $\F_{\fp^2}$
		properly are not necessarily isogenous over $\F_{\fp^2}$.
	\end{rem}
	
	%===============================================================
	
	%==================================================================
	\section{Stabilization via successive minima}\label{sec:smb-stabilization}
	
	In this section we prove that the dimension $\dim_{\F_q}\cM_s(\phi,\psi)$ stabilizes to an 
	explicit linear function of $s$ for all $s$ beyond an effective, provable threshold. For supersingular Drinfeld modules $\phi, \psi$ of rank $r \geq 2$ in $A$-characteristic $\fp$ of degree $d$, we obtain a closed formula for $\dim_{\F_q}\cM_s$ valid for every $s \geq 0$, and we show that
	\begin{equation}\label{eq-conj}
		\dim_{\F_q} \cM_s(\phi,\psi) = r(s+1) - \tfrac{r(r-1)(d-1)}{2} \qquad \text{for all } s \geq \tfrac{r^2(r-1)(d-1)}{2}.
	\end{equation}
	This range is unconditional and effective in all ranks (Proposition~\ref{prop:emax-crude}). We conjecture that it can be sharpened to the optimal range $s \geq (r-1)(d-1) - 1$ (Conjecture~\ref{conj:emax-bound}); this sharper range is established for $r = 2$ by the independent automorphic methods of Section~\ref{secBrandt}.
	
	The argument is structural. We realize $\La \colonequals \End(\phi)$ (or more generally $\Hom(\phi,\psi)$) as a normed $A$-lattice, the norm coming from the reduced-norm valuation of the local division algebra $D_\infty$. A successive-minima basis (SMB) of $\La$ is automatically norm-orthogonal, which converts the count of $\cM_s$ into a finite-dimensional polynomial counting problem and yields a closed formula for $\dim_{\F_q}\cM_s$ in terms of the SMB $\tau$-degrees alone. Combining this with two structural facts about SMBs of maximal orders---an equidistribution-modulo-$r$ statement and the evaluation of $\sum_k \deg_\tau(u_k)$ from the discriminant of $\La$---we obtain \eqref{eq-conj}. The geometric meaning of these invariants is taken up in Section~\ref{sec:mornev-volume}.
	
	Throughout this section, $\Fi$ denotes the completion of $F$ at the place $\infty$ corresponding to $1/T$, $\cO_\infty$ its ring of integers, and $\varpi_\infty$ a fixed uniformizer of $\Fi$. The normalized valuation $\ord_\infty\colon \Fi^\times \to \Z$ satisfies $\ord_\infty|_{A\setminus\{0\}} = -\deg$. The corresponding absolute value is $\abs{\cdot} \colonequals q^{-\ord_\infty(\cdot)}$; throughout this section $\abs{\cdot}$ always denotes the absolute value at $\infty$, and the subscript will be omitted.
	
	%%%%%%%%%%%%%%%%%%%%%%%%%%%%%%%%%%%%%%%%%%%%%%%
	\subsection{The lattice structure on \texorpdfstring{$\End(\phi)$}{End(phi)}}\label{sub:lattice-structure}
	%%%%%%%%%%%%%%%%%%%%%%%%%%%%%%%%%%%%%%%%%%%%%%%
	
	Let $\phi$ be a supersingular Drinfeld module of rank $r$ over $\oF_\fp$ and let $\La \colonequals \End(\phi)$. By Theorem~\ref{thmGekelerMaxOrder}, $D = \La \otimes_A F$ is the central division algebra over $F$ of dimension $r^2$ with the prescribed local invariants, and $\La$ is a maximal $A$-order in $D$. Let
	\[
	V := \La \otimes_A \Fi = D \otimes_F \Fi = D_\infty,
	\]
	which is the unique central division algebra over $\Fi$ of dimension $r^2$ with invariant $-1/r$.
	
	The valuation $\ord_\infty$ extends uniquely to a $\Q$-valued valuation $w$ on $D_\infty$ via
	\[
	w(x) := \tfrac{1}{r} \ord_\infty(\Nr(x)), \qquad w(0) := +\infty,
	\]
	with value group $\tfrac{1}{r}\Z$ on $D_\infty^\times$, where $\Nr\colon D_\infty^\times\to \Fi^\times$ is the reduced norm. The maximal order in $D_\infty$ is
	\[
	\fO_\infty := \{x \in D_\infty : w(x) \geq 0\},
	\]
	its unique maximal two-sided ideal is $\fP_\infty := \{x : w(x) \geq 1/r\}$, and $\fP_\infty^r = \varpi_\infty \fO_\infty$. The residue ring $\fO_\infty/\fP_\infty$ is the field $\F_{q^r}$. 
	
	For $u\in \La$, by \cite[Lem.~3.10]{Gekeler91} we have
	\[
	\deg_\tau(u) = \deg_T (\Nr(u)) = - \ord_\infty(\Nr(u)) = -r \cdot w(u).
	\]
	
	\begin{defn}\label{def:norm}
		Define $\norm{\cdot}_D \colon V \to \R_{\geq 0}$ by
		\[
		\norm{x}_D := \abs{\Nr(x)}^{1/r} = q^{-w(x)}.
		\]
		On $\La$, this restricts to $\|u\|_D = q^{\deg_\tau(u)/r}$.
	\end{defn}
	
	The function $\norm{\cdot}_D$ is the unique ultrametric absolute value on $D_\infty$ extending $\abs{\cdot}$ on $\Fi$; it is multiplicative, $\norm{xy}_D=\norm{x}_D\norm{y}_D$.
	
	The pair $(\La, \norm{\cdot}_D)$ is an $A$-lattice in $V$ in the sense of \cite[Def.~1.3.2]{Mornev}:
	\begin{itemize}
		\item $\La$ is a free $A$-module of rank $r^2$, which generates $V$ over $\Fi$.
		\item The set $\{u \in \La : \norm{u}_D \leq R\}$ equals $\{u \in \La : \deg_\tau u \leq r \log_q R\}$, a finite-dimensional $\F_q$-subspace of $\La$. Therefore $\norm{\cdot}_D$ induces the discrete topology on $\La$.
	\end{itemize}
	
	The same definitions extend to $\Hom(\phi,\psi)$ for any pair of supersingular $\phi, \psi$ of rank $r$ in $A$-characteristic $\fp$. Indeed, $\Hom(\phi,\psi)$ is an invertible $(\End(\psi), \End(\phi))$-bimodule, free of rank $r^2$ as an $A$-module on either side. After $\otimes_A F$ it becomes free of rank $1$ as a left (resp.\ right) $D$-module: any nonzero isogeny $f \in \Hom(\phi,\psi)$ is a unit in $D$, and the map $u \mapsto f^{-1} u$ identifies $\Hom(\phi,\psi) \otimes_A F$ with $D$ as right $D$-modules. The norm $\norm{\cdot}_D = \abs{\Nr(\cdot)}^{1/r}$ extends to $\Hom(\phi,\psi) \otimes_A \Fi = D_\infty$.
	
	In what follows, by \emph{lattice} we mean either $\La = \End(\phi)$ or $\La = \Hom(\phi,\psi)$ for supersingular $\phi, \psi$ of rank $r$ over $\oF_\fp$.
	
	%%%%%%%%%%%%%%%%%%%%%%%%%%%%%%%%%%%%%%%%%%%%%%%
	\subsection{Successive-minima bases and the unit ball}\label{sub:smb-unit-ball}
	%%%%%%%%%%%%%%%%%%%%%%%%%%%%%%%%%%%%%%%%%%%%%%%
	
	A \emph{successive-minima basis} (SMB) of $\La$ is an $A$-basis $u_1, \ldots, u_{r^2}$ realizing the successive minima of $(\La, \norm{\cdot}_D)$, in the sense of \cite[\S 4]{Taguchi}. We write
	\[
	e_k \colonequals \deg_\tau(u_k), \qquad \norm{u_k}_D = q^{e_k/r},
	\]
	and order the basis so that $e_1 \leq e_2 \leq \cdots \leq e_{r^2}$, setting $e_{\max}(\La) := \max_k e_k = e_{r^2}$. The multiset $\{e_k\}_{k=1}^{r^2}$ is an invariant of $\La$, independent of the choice of SMB; see \cite[Lem.~4.2]{Taguchi}.
	
	The defining property of an SMB is \emph{norm-orthogonality}: by \cite[Lem.~4.2]{Taguchi}, for all $c_1, \ldots, c_{r^2} \in \Fi$,
	\begin{equation}\label{eq:norm-orthog}
		\norm{\sum_k c_k u_k}_D = \max_k \abs{c_k}\cdot \norm{u_k}_D.
	\end{equation}
	In particular, the ultrametric inequality is an equality on coordinate expansions in an SMB: no cancellation can occur at the maximum.
	
	\begin{lem}\label{lem:scaled-SMB-basis}
		Let $u_1, \ldots, u_{r^2}$ be an SMB of $\La$ with $\norm{u_k}_D = q^{e_k/r}$. For each $k$, choose $a_k \in \Fi^\times$ with $\ord_\infty(a_k) = \lceil e_k/r \rceil$. Then $\{a_k u_k\}_{k=1}^{r^2}$ is an $\cO_\infty$-basis of the maximal order $\fO_\infty$.
	\end{lem}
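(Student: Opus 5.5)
The plan is to compute the norms of the rescaled vectors $a_k u_k$ and then use norm-orthogonality \eqref{eq:norm-orthog} to identify $\fO_\infty$ inside the $\Fi$-span of the $u_k$. Two preliminary facts come first.

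\emph{The $u_k$ form an $\Fi$-basis of $V$.} They are an $A$-basis of $\La$, and $\La\otimes_A\Fi = V$, so they span $V$ over $\Fi$. There are exactly $r^2=\dim_{\Fi}V$ of them, so they form an $\Fi$-basis. Independence can also be read off from \eqref{eq:norm-orthog}: if $\sum c_k u_k=0$, then $\max_k \abs{c_k}\,q^{e_k/r}=0$, which forces every $c_k=0$.

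\emph{The norms of the rescaled vectors.} Since $\abs{a_k}=q^{-\ord_\infty(a_k)}=q^{-\lceil e_k/r\rceil}$ and $\norm{\cdot}_D$ is multiplicative,
\[
\norm{a_k u_k}_D = q^{\,e_k/r-\lceil e_k/r\rceil}.
\]
The exponent $e_k/r-\lceil e_k/r\rceil$ lies in $(-1,0]$. Hence $q^{-1}<\norm{a_k u_k}_D\leq 1$, or equivalently $0\leq w(a_k u_k)<1$. In particular each $a_k u_k$ lies in $\fO_\infty$.

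\emph{The $\cO_\infty$-span is contained in $\fO_\infty$.} Take $x=\sum_k c_k a_k u_k$ with all $c_k\in\cO_\infty$. By the ultrametric inequality, $\norm{x}_D\leq \max_k\abs{c_k}\,\norm{a_ku_k}_D\leq 1$, so $x\in\fO_\infty$.

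\emph{$\fO_\infty$ is contained in the $\cO_\infty$-span.} Let $x\in\fO_\infty$. Since $\{a_k u_k\}$ is an $\Fi$-basis of $V$, we can write $x=\sum_k c_k a_k u_k$ with $c_k\in\Fi$. Applying \eqref{eq:norm-orthog} to the coefficients $c_k a_k$ gives
\[
1\geq \norm{x}_D=\max_k \abs{c_k}\,\norm{a_k u_k}_D.
\]
So for each $k$ we get $\abs{c_k}\leq \norm{a_ku_k}_D^{-1}<q$. The absolute value $\abs{c_k}$ takes values in $q^{\Z}\cup\{0\}$, so this forces $\abs{c_k}\leq 1$, i.e.\ $c_k\in\cO_\infty$. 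Therefore $\fO_\infty=\bigoplus_k \cO_\infty\, a_k u_k$, and the $a_k u_k$ form an $\cO_\infty$-basis of $\fO_\infty$.

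The only nontrivial input is norm-orthogonality with arbitrary coefficients in $\Fi$, not merely in $A$, which is exactly what \eqref{eq:norm-orthog} (Taguchi) provides. Everything else is bookkeeping. The key point is the strict inequality $\norm{a_ku_k}_D>q^{-1}$: it is what makes the integrality bound on the $c_k$ sharp, and it is why the exponent $\lceil e_k/r\rceil$ is chosen.

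For $\La=\Hom(\phi,\psi)$ the same argument applies verbatim. Here $\fO_\infty$ is read via the chosen identification $\Hom(\phi,\psi)\otimes_A\Fi\cong D_\infty$, under which $\fO_\infty$ is the closed unit ball for $\norm{\cdot}_D$.
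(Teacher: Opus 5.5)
Your proposal is correct and is essentially the paper's proof. Both compute $\norm{a_ku_k}_D = q^{e_k/r-\lceil e_k/r\rceil}\le 1$ and then use norm-orthogonality with $\Fi$-coefficients to force the coordinates of any $x\in\fO_\infty$ to be integral. The only difference is bookkeeping: you expand $x$ in the rescaled basis and combine $\norm{a_ku_k}_D>q^{-1}$ with $\abs{c_k}\in q^{\Z}\cup\{0\}$, while the paper expands $x$ in the $u_k$ and rounds $\ord_\infty(c_k)\ge e_k/r$ up to $\lceil e_k/r\rceil$.
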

	
	\begin{proof}
		We have
		\[
		\norm{a_k u_k}_D = \abs{a_k}\cdot q^{e_k/r} = q^{e_k/r - \lceil e_k/r \rceil} \leq 1,
		\]
		so each $a_k u_k \in \fO_\infty$.
		
		Let $x \in \fO_\infty$ and write $x = \sum_k c_k u_k$ with $c_k \in \Fi$. By \eqref{eq:norm-orthog},
		\[
		1 \geq \norm{x}_D = \max_k \abs{c_k}\cdot q^{e_k/r},
		\]
		so $\abs{c_k} \leq q^{-e_k/r}$, i.e., $\ord_\infty(c_k) \geq e_k/r$. Since $\ord_\infty(c_k) \in \Z$, this forces $\ord_\infty(c_k) \geq \lceil e_k/r \rceil = \ord_\infty(a_k)$, hence $c_k/a_k \in \cO_\infty$. Therefore $x = \sum_k (c_k/a_k)(a_k u_k)$ is an $\cO_\infty$-linear combination of the $a_k u_k$.
		
		Finally, if $\sum_k b_k (a_k u_k) = 0$ with $b_k \in \cO_\infty$, then $\Fi$-linear independence of $\{u_k\}$ gives $b_k a_k = 0$, hence $b_k = 0$.
	\end{proof}
	
	By construction, $\ord_\infty(a_k) = \lceil e_k/r \rceil$ gives $w(a_k u_k) = \lceil e_k/r \rceil - e_k/r \in [0,1)\cap \tfrac{1}{r}\Z$, so
	\[
	a_k u_k \in \fP_\infty^{j_k} \setminus \fP_\infty^{j_k+1}, \qquad j_k \colonequals (-e_k) \bmod r \in \{0, 1, \ldots, r-1\}.
	\]
	
	\begin{prop}\label{prop:uniform-residues}
		For every lattice $\La$, the multiset $\{e_k \bmod r\}_{k=1}^{r^2}$ contains each residue class $0, 1, \ldots, r-1$ exactly $r$ times.
	\end{prop}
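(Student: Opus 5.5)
We will prove the statement by reducing it to the residue field of the filtration of $\fO_\infty$. By Lemma~\ref{lem:scaled-SMB-basis}, the elements $b_k \colonequals a_k u_k$ form an $\cO_\infty$-basis of $\fO_\infty$. By the remark following that lemma, each $b_k$ lies in $\fP_\infty^{j_k}\setminus\fP_\infty^{j_k+1}$ with $j_k = (-e_k)\bmod r$. Since $e\mapsto (-e)\bmod r$ is a bijection on residues, it suffices to show that for each $j\in\{0,\dots,r-1\}$ exactly $r$ of the $j_k$ equal $j$.

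We count using the filtration $\fO_\infty=\fP_\infty^0\supset\fP_\infty^1\supset\cdots\supset\fP_\infty^r=\varpi_\infty\fO_\infty$. Each graded piece $\fP_\infty^j/\fP_\infty^{j+1}$ is a one-dimensional vector space over $\fO_\infty/\fP_\infty=\F_{q^r}$: it is generated by $\Pi^j$ for a uniformizer $\Pi$ of $D_\infty$. Hence each piece has $\F_q$-dimension $r$. The claim is then that the number of $k$ with $j_k=j$ equals $\dim_{\F_q}\fP_\infty^j/\fP_\infty^{j+1}=r$.

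To prove this we use norm-orthogonality \eqref{eq:norm-orthog} once more. For $x=\sum_k c_k b_k$ with $c_k\in\cO_\infty$, we have $w(x)=\min_k(\ord_\infty(c_k)+w(b_k))$, and $w(b_k)=j_k/r\in[0,1)$. So $x\in\fP_\infty^j$ if and only if $c_k\in\varpi_\infty\cO_\infty$ for all $k$ with $j_k<j$.

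This gives the explicit description
\[
\fP_\infty^j=\bigoplus_{j_k\geq j}\cO_\infty b_k\ \oplus\ \bigoplus_{j_k<j}\varpi_\infty\cO_\infty b_k .
\]
Taking the quotient of consecutive terms, $\fP_\infty^j/\fP_\infty^{j+1}\cong\bigoplus_{j_k=j}\F_q\,\bar b_k$ as $\F_q$-vector spaces, where $\F_q=\cO_\infty/\varpi_\infty$. Its $\F_q$-dimension is therefore $\#\{k: j_k=j\}$, and comparing with the value $r$ gives the result.

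The one step needing care is the justification that $\fP_\infty^j/\fP_\infty^{j+1}$ has $\F_q$-dimension exactly $r$. This uses the standard structure of the maximal order in a local division algebra: the residue field is $\F_{q^r}$ (stated in \S\ref{sub:lattice-structure}), and multiplication by $\Pi^j$ gives an isomorphism $\fO_\infty/\fP_\infty\to\fP_\infty^j/\fP_\infty^{j+1}$ because $\fP_\infty=\Pi\fO_\infty$. As a consistency check, summing over $j$ gives $r\cdot r=r^2$ basis vectors, matching $\dim_{\Fi}D_\infty$. The argument applies verbatim to $\La=\Hom(\phi,\psi)$, since only norm-orthogonality of the SMB in $D_\infty$ was used.
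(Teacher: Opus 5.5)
Your proof is correct and follows essentially the paper's route: the scaled SMB basis $\{a_ku_k\}$ of Lemma~\ref{lem:scaled-SMB-basis}, the filtration $\fO_\infty\supset\fP_\infty\supset\cdots\supset\varpi_\infty\fO_\infty$ whose graded pieces have $\F_q$-dimension $r$, and a count of basis vectors in each graded piece. Your explicit computation of $\fP_\infty^j$ via norm-orthogonality in fact supplies the justification for the ``adapted to the filtration'' step that the paper attributes to Nakayama, which Nakayama's lemma alone does not give.
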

	
	\begin{proof}
		The $\cO_\infty$-module $\fO_\infty$ admits the filtration
		\[
		\fO_\infty \supset \fP_\infty \supset \fP_\infty^2 \supset \cdots \supset \fP_\infty^{r-1} \supset \fP_\infty^r = \varpi_\infty \fO_\infty.
		\]
		Each graded piece $\fP_\infty^j/\fP_\infty^{j+1}$ is one-dimensional over $\fO_\infty/\fP_\infty = \F_{q^r}$, hence has $\F_q$-dimension $r$. The reduction $\fO_\infty/\varpi_\infty \fO_\infty$ is therefore an $\F_q$-vector space of dimension $r^2$, carrying the induced filtration whose $r$ graded pieces each contribute dimension $r$.
		
		By Lemma~\ref{lem:scaled-SMB-basis}, $\{a_k u_k\}_{k=1}^{r^2}$ is an $\cO_\infty$-basis of $\fO_\infty$, and each $a_k u_k$ lies in $\fP_\infty^{j_k} \setminus \fP_\infty^{j_k+1}$. By Nakayama, the images $\overline{a_k u_k}$ form an $\F_q$-basis of $\fO_\infty/\varpi_\infty \fO_\infty$ adapted to the filtration. Hence each value $j \in \{0, 1, \ldots, r-1\}$ occurs exactly $r$ times in the multiset $\{j_k\}$; otherwise some graded piece would be over- or under-spanned.
		
		Since $j_k = (-e_k) \bmod r$, the same uniform distribution holds for $\{e_k \bmod r\}$.
	\end{proof}
	
	%%%%%%%%%%%%%%%%%%%%%%%%%%%%%%%%%%%%%%%%%%%%%%%
	\subsection{The sum of the SMB \texorpdfstring{$\tau$}{tau}-degrees}\label{sub:sum-ek}
	%%%%%%%%%%%%%%%%%%%%%%%%%%%%%%%%%%%%%%%%%%%%%%%
	
	The next proposition evaluates $\sum_k e_k$ directly from the discriminant of $\La$, using the trace pairing and Lemma~\ref{lem:scaled-SMB-basis}.
	
	\begin{prop}\label{prop:sum-ek}
		For every lattice $\La$,
		\[
		\sum_{k=1}^{r^2} e_k = \frac{r^2(r-1)d}{2}.
		\]
	\end{prop}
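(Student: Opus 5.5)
Let $u_1,\dots,u_{r^2}$ be an SMB of $\La$. I would compute the discriminant of the reduced trace pairing $\langle x,y\rangle=\Tr(xy)$ twice: globally over $A$, and locally at $\infty$ in the basis $\{a_ku_k\}$ of Lemma~\ref{lem:scaled-SMB-basis}. Comparing the two results yields $\sum_k e_k$.

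\textbf{Global computation.} Write $\disc_A(\La)=\det(\Tr(u_iu_j))\in A$. For $\La=\End(\phi)$, a maximal $A$-order in $D$ (Theorem~\ref{thmGekelerMaxOrder}), the standard formula gives $\disc(\La)A=\prod_v \fp_v^{r(r-1)}$, the product running over the finite ramified places. Here the only finite ramified place is $\fp$, with local invariant $1/r$ of exact denominator $r$. So $\disc(\La)=c\,\fp^{r(r-1)}$ with $c\in\F_q^\times$, and
\[
\deg\disc(\La)=r(r-1)d .
\]
For $\La=\Hom(\phi,\psi)$ I would use the reduced-norm form on the bimodule instead. Fix a nonzero isogeny $f$ and identify $\Hom(\phi,\psi)$ with the left ideal $I=f^{-1}\Hom(\phi,\psi)$ of $\End(\phi)$, up to scaling. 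The discriminant of an invertible ideal picks up a factor $\Nr(I)^{r}$, and this factor is exactly cancelled once the norm is renormalized by $\deg_\tau f$. The upshot is that $\sum_k e_k$ is unchanged. Alternatively, and more directly, one can apply the pairing $(u,v)\mapsto\Tr(\hat u v)$ after passing through $f$ and note that everything is homogeneous.

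\textbf{Local computation.} Change basis from $\{u_k\}$ to $\{a_ku_k\}$. The determinant then scales by $\prod_k a_k^2$, so
\[
\ord_\infty\det\bigl(\Tr(a_iu_ia_ju_j)\bigr)=-\deg\disc(\La)+2\sum_k\lceil e_k/r\rceil .
\]
By Lemma~\ref{lem:scaled-SMB-basis}, the left side is the $\infty$-valuation of the discriminant of the maximal order $\fO_\infty$. Since $D_\infty$ has invariant $-1/r$, that valuation equals $r(r-1)$, which gives
\[
2\sum_k\lceil e_k/r\rceil=r(r-1)+r(r-1)d .
\]

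\textbf{Converting ceilings to $e_k$.} By Proposition~\ref{prop:uniform-residues}, each residue class modulo $r$ occurs exactly $r$ times among the $e_k$. Hence
\[
\sum_k\bigl(\lceil e_k/r\rceil-e_k/r\bigr)=r\sum_{j=0}^{r-1}\frac{j}{r}=\frac{r(r-1)}{2}.
\]
Substituting this, $\sum_k e_k/r=r(r-1)(d+1)/2-r(r-1)/2=r(r-1)d/2$, so $\sum_k e_k=r^2(r-1)d/2$, as claimed.

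\textbf{Main obstacle.} The hard step is making the two discriminant inputs precise. For the local one, I would use the explicit $\cO_\infty$-basis $\Pi^i\omega_j$, where $\Pi$ is a uniformizer of $D_\infty$ and $\omega_j$ runs over a basis of the unramified $\F_{q^r}$-part; in this basis the trace form is block anti-diagonal up to $\varpi_\infty$, which gives valuation $r(r-1)$. The second delicate point is extending the argument from $\End(\phi)$ to $\Hom(\phi,\psi)$. For that, I would use that $\Hom(\phi,\psi)\otimes A_v$ is locally principal over $\End(\phi)\otimes A_v$ for every place $v$, generated by some $g_v$. This reduces both discriminants to the $\End$ case, shifted by $\ord\Nr(g_v)$, and these shifts cancel between the global and local computations by the product formula.
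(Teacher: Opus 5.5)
For $\La=\End(\phi)$ your argument is the paper's proof. It uses the same relation $\det G_\infty=(\prod_k a_k)^2\det G_\La$ between trace-form Gram matrices, and the same inputs $\disc(\La/A)=\fp^{r(r-1)}$ and $\disc(\fO_\infty/\cO_\infty)=\varpi_\infty^{r(r-1)}$; you verify the latter directly via the basis $\omega_j\Pi^i$ instead of citing Reiner. This gives $\sum_k\lceil e_k/r\rceil=r(r-1)(d+1)/2$, followed by the same conversion through Proposition~\ref{prop:uniform-residues}. Your form $\sum_k(\lceil e_k/r\rceil-e_k/r)=r(r-1)/2$ is correct, since $(-e_k)\bmod r$ is also equidistributed.

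The paper applies the maximal-order discriminant to every $\La$ without comment, so your paragraph on $\Hom(\phi,\psi)$ goes beyond it. However, the cancellation you assert there does not follow from the reason you give. Put $I=f^{-1}\Hom(\phi,\psi)$ and $e_k'=e_k-\deg_\tau f$. The trace-form discriminant of $I$ is $\fp^{r(r-1)}\Nr(I)^{2r}$: the module index $\Nr(I)^r$ enters squared. Running your argument on $I$ then yields
\[
\sum_k e_k=\frac{r^2(r-1)d}{2}+r^2\deg\gcd\{\fN(u):u\in\Hom(\phi,\psi)\}.
\]
The product formula only disposes of the contribution of the single global element $f$, i.e.\ the $\deg_\tau f$ terms. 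Since $I$ need not be principal, the local generators $g_v$ are not images of one global element, and nothing in your argument kills the last term.

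What is needed is that the isogeny norm has trivial content on $\Hom(\phi,\psi)$, i.e.\ every completion $\Hom(\phi,\psi)\otimes A_v$ contains elements of unit norm. This is true: it follows from $\fN(u)=\Nr(u)/\fn_{ij}$ on the locally principal bimodule $I_j^{-1}I_i$ (Remark~\ref{rem3.3}). But it is a genuine arithmetic input about isogenies and must be invoked explicitly.

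Your alternative pairing $\Tr(\hat u v)$ only works for $r=2$. For $r\geq 3$, an $A$-linear dual with $\hat u u=\fN(u)$ would force $\fN(a\circ u)=a^2\fN(u)$, whereas in fact $\fN(a\circ u)=a^r\fN(u)$. So that pairing is not bilinear.
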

	
	\begin{proof}
		Let $u_1, \ldots, u_{r^2}$ be an SMB of $\La$ and let $a_k \in \Fi^\times$ be as in Lemma~\ref{lem:scaled-SMB-basis}, so $\{a_k u_k\}$ is an $\cO_\infty$-basis of $\fO_\infty$. Consider the symmetric bilinear pairing on $D_\infty$ given by the reduced trace, $\langle x, y\rangle \colonequals \Tr(xy)$, where $\Tr\colon D_\infty \to \Fi$ is the reduced trace.
		
		Form the Gram matrices
		\[
		G_\La \colonequals \bigl(\Tr(u_i u_j)\bigr)_{1\leq i,j\leq r^2}, \qquad G_\infty \colonequals \bigl(\Tr(a_i u_i \cdot a_j u_j)\bigr)_{1\leq i,j\leq r^2}.
		\]
		Let $\delta = \diag(a_1, \ldots, a_{r^2})$. By bilinearity, $G_\infty = \delta\cdot G_\La \cdot \delta$, hence
		\begin{equation}\label{eq:gram-relation}
			\det(G_\infty) = \Bigl(\prod_{k=1}^{r^2} a_k\Bigr)^2 \cdot \det(G_\La).
		\end{equation}
		
		We evaluate each side via the discriminant identity \cite[Thm.~32.1]{Reiner}.
		\begin{itemize}
			\item Since $u_1, \ldots, u_{r^2}$ is an $A$-basis of the maximal order $\La$, $\det(G_\La)$ generates 
			\[\disc(\La/A) = \fp^{r(r-1)}\] as a fractional $A$-ideal, so
			\[
			\abs{\det(G_\La)} = \abs{\fp}^{r(r-1)} = q^{r(r-1)d}.
			\]
			\item Since $a_1 u_1, \ldots, a_{r^2} u_{r^2}$ is an $\cO_\infty$-basis of the maximal $\cO_\infty$-order $\fO_\infty$, $\det(G_\infty)$ generates $\disc(\fO_\infty/\cO_\infty) = \varpi_\infty^{r(r-1)}$, so
			\[
			\abs{\det(G_\infty)} = \abs{\varpi_\infty}^{r(r-1)} = q^{-r(r-1)}.
			\]
		\end{itemize}
		Taking absolute values in \eqref{eq:gram-relation},
		\[
		q^{-r(r-1)} = \Bigl(\prod_k \abs{a_k}\Bigr)^2 \cdot q^{r(r-1)d},
		\]
		hence
		\[
		\prod_k \abs{a_k}^{-1} = q^{r(r-1)(d+1)/2}.
		\]
		Since $\abs{a_k} = q^{-\lceil e_k/r\rceil}$,
		\begin{equation}\label{eq:sum-ceilings}
			\sum_{k=1}^{r^2} \lceil e_k/r\rceil = \frac{r(r-1)(d+1)}{2}.
		\end{equation}
		Finally, write $e_k = r\lfloor e_k/r\rfloor + (e_k \bmod r)$, so
		\[
		\lceil e_k/r\rceil = \frac{e_k}{r} + \frac{r - (e_k \bmod r)}{r}\cdot \mathbf{1}_{e_k \bmod r \neq 0}.
		\]
		By Proposition~\ref{prop:uniform-residues}, each residue class $j \in \{1, \ldots, r-1\}$ appears exactly $r$ times among $\{e_k \bmod r\}$, so
		\begin{equation}\label{eq:sum-ceiling-vs-sum}
			\sum_k \lceil e_k/r\rceil = \frac{1}{r}\sum_k e_k + r\sum_{j=1}^{r-1}\frac{r-j}{r} = \frac{1}{r}\sum_k e_k + \frac{r(r-1)}{2}.
		\end{equation}
		Combining \eqref{eq:sum-ceilings} and \eqref{eq:sum-ceiling-vs-sum},
		\[
		\sum_k e_k = r\Bigl(\frac{r(r-1)(d+1)}{2} - \frac{r(r-1)}{2}\Bigr) = \frac{r^2(r-1)d}{2}. \qedhere
		\]
	\end{proof}
	
	\begin{rem}\label{rmk:sum-ek-invariant}
		Although the SMB multiset $\{e_k\}$ generally depends on $\La$, Proposition~\ref{prop:sum-ek} shows that the sum $\sum_k e_k$ depends only on the algebra $D$, through its discriminant $\fp^{r(r-1)}$. The individual $e_k$ and $e_{\max}(\La)$ can vary; see Remark~\ref{rmk:emax-varies} below.
	\end{rem}
	
	%%%%%%%%%%%%%%%%%%%%%%%%%%%%%%%%%%%%%%%%%%%%%%%
	\subsection{The dimension formula and stabilization}\label{sub:dim-formula}
	%%%%%%%%%%%%%%%%%%%%%%%%%%%%%%%%%%%%%%%%%%%%%%%
	
	We now turn to the count of $\cM_s$. 
	%The key point is that norm-orthogonality \eqref{eq:norm-orthog} converts the condition $\deg_\tau(u)\leq s$ into independent conditions on the coordinates of $u$ in an SMB.
	
	\begin{thm}\label{thm:dimMs-elementary}
		Let $\La$ be a lattice with SMB $\tau$-degrees $\{e_k\}_{k=1}^{r^2}$. For every integer $s\geq 0$,
		\begin{equation}\label{eq:dimMs-elementary}
			\dim_{\F_q}\cM_s = \sum_{k=1}^{r^2} \max\!\left(0,\; \left\lfloor \frac{s - e_k}{r}\right\rfloor + 1\right).
		\end{equation}
	\end{thm}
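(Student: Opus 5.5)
The plan is to use norm-orthogonality \eqref{eq:norm-orthog} of an SMB $u_1,\dots,u_{r^2}$ to show that $\cM_s$ is a direct sum of one-dimensional pieces. First, since $\{u_k\}$ is an $A$-basis of $\La$, every $u\in\La$ can be written uniquely as $u=\sum_k c_k\circ u_k$ with $c_k\in A$. Because the $A$-action is $a\circ u=\psi_a u$ and $\deg_\tau$ is additive on products in $\twist{\oF_\fp}$, we have $\norm{c\circ u_k}_D=\abs{c}\norm{u_k}_D$, which is consistent with the $\Fi$-linear extension used in \eqref{eq:norm-orthog}. Applying \eqref{eq:norm-orthog} with the $c_k\in A\subset\Fi$ then gives
\[
\deg_\tau(u)=r\log_q\norm{u}_D=\max_{k:\,c_k\neq 0}\bigl(r\deg c_k+e_k\bigr).
\]

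Second, $\deg_\tau(u)\le s$ holds if and only if, for every $k$, either $c_k=0$ or $r\deg c_k+e_k\le s$. Equivalently, $c_k$ lies in the $\F_q$-space $A_{\le m_k}$ of polynomials of degree at most $m_k$, where $m_k=\lfloor (s-e_k)/r\rfloor$. Here $A_{\le m}=0$ when $m<0$. The condition on $c_k$ therefore does not involve the other coefficients, and the map $(c_k)_k\mapsto \sum_k c_k\circ u_k$ is an $\F_q$-linear isomorphism $\bigoplus_k A_{\le m_k}\xrightarrow{\sim}\cM_s$. It is injective because the $u_k$ are $A$-independent, and surjective by the previous step.

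Third, we use $\dim_{\F_q}A_{\le m}=\max(0,m+1)$ and sum over $k$, which gives \eqref{eq:dimMs-elementary}.

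The only delicate point is the first step: making sure that the $A$-module structure on $\La$ matches scalar multiplication in $V=\La\otimes_A\Fi$, so that \eqref{eq:norm-orthog} applies to $A$-coefficients. This holds because $\La\otimes_A\Fi$ is formed using exactly this $A$-action. Multiplicativity of $\norm{\cdot}_D$, together with $\deg_\tau(\psi_a)=r\deg a$, confirms that $\norm{a\circ u}_D=\abs{a}\norm{u}_D$. The floor must also be handled correctly when $s-e_k$ is negative, and the $\max(0,\cdot)$ in the formula takes care of this.
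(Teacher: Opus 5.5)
Your proposal is correct and follows essentially the same route as the paper: expand $u\in\La$ in an SMB with $A$-coefficients, apply norm-orthogonality to turn $\deg_\tau(u)\leq s$ into independent bounds $\deg c_k\leq\lfloor (s-e_k)/r\rfloor$, and count the resulting direct sum of spaces of polynomials of bounded degree. Your explicit check that the action $a\circ u=\psi_a u$ matches scalar multiplication in $V$, so that $\norm{a\circ u}_D=\abs{a}\norm{u}_D$, is a point the paper asserts without elaboration.
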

	
	\begin{proof}
		Let $u_1, \ldots, u_{r^2}$ be an SMB of $\La$. Every $u\in \La$ has a unique expression
		\[
		u = \sum_{k=1}^{r^2} \phi_{a_k}(u_k), \qquad a_k \in A,
		\]
		where $\phi_{a_k}(u_k) = a_k \circ u_k$ denotes the $A$-action on $\La$. After $\otimes_A \Fi$, this $A$-action corresponds to left multiplication by $a_k \in A \subset F \subset \Fi$ inside $D_\infty$. Norm-orthogonality \eqref{eq:norm-orthog} with $c_k = a_k$ gives
		\[
		\norm{u}_D = \max_k \abs{a_k} \cdot \norm{u_k}_D = \max_k q^{\deg(a_k) + e_k/r},
		\]
		with the convention $\deg(0) = -\infty$. Hence $u \in \cM_s$, equivalently $\norm{u}_D \leq q^{s/r}$, holds if and only if
		\[
		\deg(a_k) \leq \frac{s - e_k}{r} \quad \text{for all } k.
		\]
		Since $\deg(a_k) \in \Z \cup \{-\infty\}$, this is equivalent to $\deg(a_k) \leq \lfloor(s-e_k)/r\rfloor$.
		
		The conditions on the $a_k$ are independent, so we obtain an $\F_q$-linear direct sum decomposition
		\[
		\cM_s = \bigoplus_{k=1}^{r^2} \bigl\{\phi_{a_k}(u_k) : a_k \in A,\ \deg(a_k) \leq N_k\bigr\}, \qquad N_k \colonequals \lfloor (s-e_k)/r\rfloor.
		\]
		The $k$-th summand is the space of polynomials in $A = \F_q[T]$ of degree at most $N_k$ (acting on $u_k$), which has $\F_q$-dimension $\max(0, N_k + 1)$. Summing over $k$ yields \eqref{eq:dimMs-elementary}.
	\end{proof}
	
	\begin{rem}
		The content of Theorem~\ref{thm:dimMs-elementary} is that norm-orthogonality turns the count of $\cM_s$ into a sum of independent one-dimensional counts. For an arbitrary $A$-basis of $\La$ one obtains only the lower bound $\dim_{\F_q}\cM_s \geq \sum_k \max(0, \lfloor(s-e_k)/r\rfloor + 1)$, since leading-degree cancellation among basis vectors can occur; with an SMB, no such cancellation is possible and the bound becomes an equality.
	\end{rem}
	
	To exploit \eqref{eq:dimMs-elementary} we record an algebraic identity satisfied by the multiset $\{e_k\}$. It depends only on the two structural facts established in Sections~\ref{sub:smb-unit-ball} and \ref{sub:sum-ek}: residue equidistribution (Proposition~\ref{prop:uniform-residues}) and the value $\sum_k e_k = r^2(r-1)d/2$ (Proposition~\ref{prop:sum-ek}).
	
	\begin{lem}\label{lem:floor-identity}
		For every integer $s \geq 0$,
		\[
		\sum_{k=1}^{r^2}\left(\left\lfloor\frac{s-e_k}{r}\right\rfloor + 1\right) = r(s+1) - \frac{r(r-1)(d-1)}{2}.
		\]
	\end{lem}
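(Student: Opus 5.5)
The plan is to evaluate the sum term by term using the division algorithm, and to use only Propositions~\ref{prop:uniform-residues} and \ref{prop:sum-ek}. Note that the identity involves no $\max(0,\cdot)$, so each term $\lfloor (s-e_k)/r\rfloor$ may be negative; that causes no difficulty, because the floor function is defined on all of $\Z$.

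First, fix $s\geq 0$ and write $s-e_k = r\lfloor (s-e_k)/r\rfloor + \rho_k$ with $\rho_k\in\{0,\dots,r-1\}$. Then
\[
\left\lfloor\frac{s-e_k}{r}\right\rfloor = \frac{s-e_k-\rho_k}{r}.
\]
Summing over $k$ gives
\[
\sum_{k=1}^{r^2}\left(\left\lfloor\frac{s-e_k}{r}\right\rfloor+1\right) = r^2 + \frac{r^2 s - \sum_k e_k - \sum_k \rho_k}{r}.
\]

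Next, compute $\sum_k\rho_k$. Since $\rho_k\equiv s-e_k \pmod r$, the map $e_k \bmod r\mapsto \rho_k$ is the bijection $j\mapsto (s-j)\bmod r$ of $\Z/r\Z$. By Proposition~\ref{prop:uniform-residues}, each residue $j$ occurs exactly $r$ times among the $e_k \bmod r$. Hence each value $0,\dots,r-1$ also occurs exactly $r$ times among the $\rho_k$, and
\[
\sum_k\rho_k = r\cdot\frac{r(r-1)}{2},
\]
independently of $s$. This reindexing is the only step with real content; everything else is direct substitution.

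Finally, substitute $\sum_k e_k = r^2(r-1)d/2$ from Proposition~\ref{prop:sum-ek}. The sum becomes
\[
r^2 + rs - \frac{r(r-1)d}{2} - \frac{r(r-1)}{2} = r(s+1) + r(r-1) - \frac{r(r-1)(d+1)}{2}.
\]
Since $r(r-1) - \tfrac{r(r-1)(d+1)}{2} = -\tfrac{r(r-1)(d-1)}{2}$, this equals $r(s+1) - \tfrac{r(r-1)(d-1)}{2}$, which is the claimed identity.
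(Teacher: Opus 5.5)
Your proof is correct and follows essentially the same route as the paper: the division algorithm, residue equidistribution (Proposition~\ref{prop:uniform-residues}), and the value of $\sum_k e_k$ (Proposition~\ref{prop:sum-ek}). The only difference is bookkeeping. You reduce $s-e_k$ modulo $r$ directly, whereas the paper reduces $e_k$ and then evaluates $\sum_{\rho=0}^{r-1}\lfloor (s-\rho)/r\rfloor = s-r+1$ by an explicit count. Your reindexing of the residues $\rho_k=(s-e_k)\bmod r$ avoids that count and is slightly tidier.
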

	
	\begin{proof}
		Write $e_k = r\lfloor e_k/r\rfloor + (e_k \bmod r)$. Then
		\[
		\lfloor(s-e_k)/r\rfloor = -\lfloor e_k/r\rfloor + \lfloor(s - (e_k \bmod r))/r\rfloor,
		\]
		where $\lfloor e_k/r\rfloor = (e_k - (e_k \bmod r))/r$ since $0 \leq e_k \bmod r < r$. Summing over $k$ and using Proposition~\ref{prop:uniform-residues} (each residue $\rho \in \{0, \ldots, r-1\}$ occurs exactly $r$ times among $\{e_k \bmod r\}$),
		\[
		\sum_k \lfloor(s-e_k)/r\rfloor = -\frac{1}{r}\sum_k\bigl(e_k - (e_k \bmod r)\bigr) + r\sum_{\rho=0}^{r-1}\lfloor(s-\rho)/r\rfloor.
		\]
		Writing $s = rm + \sigma$ with $\sigma \in \{0, \ldots, r-1\}$, a direct count gives \[\sum_{\rho=0}^{r-1}\lfloor(s-\rho)/r\rfloor = (\sigma+1)m + (r-\sigma-1)(m-1) = s - r + 1.\] Hence
		\[
		\sum_k \lfloor(s-e_k)/r\rfloor = -\frac{1}{r}\Bigl(\sum_k e_k - \frac{r^2(r-1)}{2}\Bigr) + r(s-r+1),
		\]
		where we used $\sum_k(e_k \bmod r) = r\cdot\frac{r(r-1)}{2} = \frac{r^2(r-1)}{2}$. Substituting $\sum_k e_k = r^2(r-1)d/2$ from Proposition~\ref{prop:sum-ek},
		\begin{align*}
			\sum_k \lfloor(s-e_k)/r\rfloor &= -\frac{r(r-1)d}{2} + \frac{r(r-1)}{2} + r(s-r+1) \\
			&= rs - r^2 + r - \frac{r(r-1)(d-1)}{2}.
		\end{align*}
		Adding $r^2$ to both sides yields the claim.
	\end{proof}
	
	The right-hand side of \eqref{eq:dimMs-elementary} stabilizes as soon as $s$ exceeds the largest SMB minimum:
	
	\begin{cor}\label{cor:stable-range-per-Lambda}
		For $s \geq e_{\max}(\La) - r + 1$,
		\[
		\dim_{\F_q}\cM_s = r(s+1) - \frac{r(r-1)(d-1)}{2}.
		\]
	\end{cor}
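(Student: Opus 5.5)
The plan is to combine the closed formula of Theorem~\ref{thm:dimMs-elementary} with the algebraic identity of Lemma~\ref{lem:floor-identity}. The point is that once every summand in \eqref{eq:dimMs-elementary} is nonnegative before truncation, the $\max(0,\cdot)$ operations become vacuous. At that stage the sum in \eqref{eq:dimMs-elementary} coincides with the untruncated sum evaluated in Lemma~\ref{lem:floor-identity}.

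Concretely, fix $s \geq e_{\max}(\La) - r + 1$. For each $k$ we have $e_k \leq e_{\max}(\La)$, so $s - e_k \geq s - e_{\max}(\La) \geq -(r-1)$. Hence $(s-e_k)/r > -1$, which gives $\lfloor (s-e_k)/r\rfloor \geq -1$. Therefore $\lfloor (s-e_k)/r\rfloor + 1 \geq 0$, and so
\[
\max\!\left(0,\ \left\lfloor \frac{s-e_k}{r}\right\rfloor + 1\right) = \left\lfloor \frac{s-e_k}{r}\right\rfloor + 1
\]
for every $k=1,\dots,r^2$.

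Substituting this into \eqref{eq:dimMs-elementary} gives
\[
\dim_{\F_q}\cM_s = \sum_{k=1}^{r^2}\left(\left\lfloor\frac{s-e_k}{r}\right\rfloor+1\right).
\]
Lemma~\ref{lem:floor-identity} evaluates the right-hand side as $r(s+1) - r(r-1)(d-1)/2$, which is the claimed formula. The hypothesis $s\geq 0$ required by both results holds automatically whenever $e_{\max}(\La)\geq r-1$. Proposition~\ref{prop:uniform-residues} guarantees this, since some $e_k \equiv r-1 \pmod r$ forces $e_{\max}(\La)\geq r-1$. Thus the range in the corollary lies inside $s\geq 0$.

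There is no substantial obstacle here. The only point needing care is the boundary case, namely checking that the threshold $e_{\max}(\La)-r+1$ (rather than $e_{\max}(\La)$) already suffices. This holds because a negative numerator strictly greater than $-r$ still has floor exactly $-1$ after division by $r$, so the corresponding summand contributes $0$ in both the truncated and untruncated sums.
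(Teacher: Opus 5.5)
Your proposal is correct and follows the paper's proof: you show each $\lfloor (s-e_k)/r\rfloor + 1$ is nonnegative, so the truncation in \eqref{eq:dimMs-elementary} is vacuous, and then you invoke Lemma~\ref{lem:floor-identity}. Your bound $\lfloor (s-e_k)/r\rfloor + 1 \geq 0$ is the accurate one. The paper's written ``$\geq 1$'' fails at the boundary $s = e_{\max}(\La)-r+1$, though only nonnegativity is needed. Your check via Proposition~\ref{prop:uniform-residues} that the range lies inside $s\geq 0$ is a welcome extra detail.
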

	
	\begin{proof}
		If $s \geq e_{\max}(\La) - r + 1$, then $\lfloor(s-e_k)/r\rfloor + 1 \geq 1 > 0$ for every $k$, so the maximum in \eqref{eq:dimMs-elementary} is non-binding and $\dim_{\F_q}\cM_s = \sum_k\bigl(\lfloor(s-e_k)/r\rfloor + 1\bigr)$. Apply Lemma~\ref{lem:floor-identity}.
	\end{proof}
	
	The threshold in Corollary~\ref{cor:stable-range-per-Lambda} depends on $\La$ through $e_{\max}(\La)$. We expect a uniform bound:
	
	\begin{conj}\label{conj:emax-bound}
		For every lattice $\La$ (i.e., every maximal $A$-order $\La$ in $D$, and every bimodule $\Hom(\phi,\psi)$), $e_{\max}(\La) \leq (r-1)d$.
	\end{conj}
	
	To proceed we rewrite the identity \eqref{eq:dimMs-elementary} as the stable-range value plus a non-negative correction.
	
	\begin{prop}\label{prop:dimMs-additive}
		For every lattice $\La$ and every integer $s \geq 0$,
		\begin{equation}\label{eq:dimMs-additive}
			\dim_{\F_q}\cM_s = r(s+1) - \frac{r(r-1)(d-1)}{2} + \sum_{k=1}^{r^2}\max\!\left(0,\, \left\lceil\frac{e_k - s}{r}\right\rceil - 1\right).
		\end{equation}
	\end{prop}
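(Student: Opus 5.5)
The plan is to start from the closed formula of Theorem~\ref{thm:dimMs-elementary} and split each summand into the unconstrained term plus a correction. For each $k$ put $N_k \colonequals \lfloor (s-e_k)/r\rfloor$. I will use the elementary identity $\max(0, x) = x + \max(0, -x)$, valid for every integer $x$, with $x = N_k + 1$. This gives
\[
\max\!\left(0,\, N_k + 1\right) = (N_k + 1) + \max\!\left(0,\, -N_k - 1\right).
\]

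Next I rewrite the correction term. Since $-\lfloor y\rfloor = \lceil -y\rceil$, we have
\[
-N_k - 1 = \left\lceil \frac{e_k - s}{r}\right\rceil - 1.
\]
So the $k$-th summand of \eqref{eq:dimMs-elementary} equals
\[
\left(\left\lfloor \frac{s-e_k}{r}\right\rfloor + 1\right) + \max\!\left(0,\, \left\lceil \frac{e_k - s}{r}\right\rceil - 1\right).
\]

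Finally I sum over $k = 1, \ldots, r^2$. The unconstrained parts add up to $r(s+1) - r(r-1)(d-1)/2$ by Lemma~\ref{lem:floor-identity}. The correction parts give exactly the sum appearing in \eqref{eq:dimMs-additive}. Each correction is visibly non-negative, and it vanishes precisely when $N_k \ge -1$. In particular all corrections vanish when $s \ge e_{\max}(\La) - r + 1$, which matches Corollary~\ref{cor:stable-range-per-Lambda} as a consistency check.

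No step here is a real obstacle. The one place needing care is the floor/ceiling sign conversion, together with checking that $\max(0,x) = x + \max(0,-x)$ holds on both sides of zero. Both are immediate from the definitions.
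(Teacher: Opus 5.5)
Your proposal is correct and is essentially the paper's proof: apply $\max(0,x)=x+\max(0,-x)$ to $x=\lfloor (s-e_k)/r\rfloor+1$ in Theorem~\ref{thm:dimMs-elementary}, rewrite $-x=\lceil (e_k-s)/r\rceil-1$, and sum the main terms using Lemma~\ref{lem:floor-identity}. Your consistency remark also holds; indeed the corrections already vanish for $s\geq e_{\max}(\La)-r$, as the paper's subsequent remark notes.
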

	
	\begin{proof}
		By Lemma~\ref{lem:floor-identity},
		\[
		\sum_{k=1}^{r^2}\left(\lfloor(s-e_k)/r\rfloor + 1\right) = r(s+1) - \frac{r(r-1)(d-1)}{2}.
		\]
		For any integer $x$, $\max(0, x) = x + \max(0, -x)$. Apply this with $x = \lfloor(s-e_k)/r\rfloor + 1$, noting that
		\[
		-x = -\lfloor(s-e_k)/r\rfloor - 1 = \lceil(e_k - s)/r\rceil - 1.
		\]
		Summing over $k$ in \eqref{eq:dimMs-elementary},
		\begin{align*}
			\dim_{\F_q}\cM_s &= \sum_k\max\!\left(0,\, \lfloor(s-e_k)/r\rfloor + 1\right) \\
			&= \sum_k\left(\lfloor(s-e_k)/r\rfloor + 1\right) + \sum_k\max\!\left(0,\, \lceil(e_k - s)/r\rceil - 1\right) \\
			&= r(s+1) - \frac{r(r-1)(d-1)}{2} + \sum_k\max\!\left(0,\, \lceil(e_k - s)/r\rceil - 1\right). \qedhere
		\end{align*}
	\end{proof}
	
	\begin{rem}
		The correction term $\sum_k\max(0,\lceil(e_k-s)/r\rceil - 1)$ in \eqref{eq:dimMs-additive} is non-negative, and the $k$-th summand vanishes if and only if $e_k \leq s + r$. Hence the entire correction vanishes if and only if $s \geq e_{\max}(\La) - r$, which recovers Corollary~\ref{cor:stable-range-per-Lambda} and slightly sharpens it: the stable formula already holds at $s = e_{\max}(\La) - r$, when this is non-negative.
	\end{rem}
	
	The boundary value $s = 0$ of \eqref{eq:dimMs-elementary} extracts the number of vanishing SMB minima.
	
	\begin{prop}\label{prop:m0-count}
		For every lattice $\La$,
		\[
		\dim_{\F_q}\cM_0 = \#\{k : e_k = 0\}.
		\]
	\end{prop}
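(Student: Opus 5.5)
The plan is to specialize the closed formula of Theorem~\ref{thm:dimMs-elementary} to $s=0$ and evaluate each summand. With $s=0$, the $k$-th summand in \eqref{eq:dimMs-elementary} is
\[
\max\!\left(0,\ \left\lfloor \frac{-e_k}{r}\right\rfloor + 1\right).
\]
Since each $e_k=\deg_\tau(u_k)$ is a non-negative integer, we split into two cases.

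If $e_k=0$, then $\lfloor 0/r\rfloor+1=1$, so the summand equals $1$. If $e_k\geq 1$, then $-e_k/r<0$, hence $\lfloor -e_k/r\rfloor\leq -1$. The inner expression is therefore at most $0$, and the summand vanishes. Summing over $k$ gives exactly $\#\{k: e_k=0\}$.

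As a sanity check on the meaning of the statement, norm-orthogonality \eqref{eq:norm-orthog} shows that $\cM_0$ is the $\F_q$-span of those $u_k$ with $e_k=0$. Any $u=\sum_k a_k\circ u_k$ with $\deg_\tau u=0$ must have $a_k=0$ whenever $e_k>0$, and $\deg a_k\leq 0$ otherwise.

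There is no real obstacle here. The only point to watch is the floor of a negative non-integer, or of $-1$ when $e_k=r$. The bound $\lfloor -e_k/r\rfloor\leq -1$ for every $e_k\geq1$ covers all such cases uniformly.
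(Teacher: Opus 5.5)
Your proposal is correct and follows the paper's proof: both set $s=0$ in \eqref{eq:dimMs-elementary} and note that the $k$-th summand is $1$ when $e_k=0$ and $0$ when $e_k\geq 1$, because then $\lfloor -e_k/r\rfloor\leq -1$. Your extra remark, that norm-orthogonality makes $\cM_0$ the $\F_q$-span of the $u_k$ with $e_k=0$, is also correct, though the proof does not need it.
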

	
	\begin{proof}
		By \eqref{eq:dimMs-elementary} with $s = 0$, the $k$-th summand is $\max(0, \lfloor -e_k/r\rfloor + 1)$. Since $e_k \geq 0$, this equals $1$ when $e_k = 0$ and $0$ when $e_k \geq 1$ (as $\lfloor -e_k/r\rfloor \leq -1$ then). Summing gives the claim.
	\end{proof}
	
	When $\La = \End(\phi)$, the space $\cM_0$ identifies with the constant subring $\End(\phi)\cap \oF_\fp$, whose size is governed by the symmetry of $\phi$ in the following sense.
	
	\begin{lem}\label{lem:e1-zero}
		Let $\phi$ be a Drinfeld module of rank $r$ over $\oF_\fp$ with $\phi_T = t + g_1\tau + \cdots + g_r\tau^r$, and let $\La = \End(\phi)$. Set
		\[
		\ell(\phi) \colonequals \gcd\bigl\{i : 1 \leq i \leq r,\ g_i \neq 0\bigr\},
		\]
		a positive divisor of $r$ (the gcd is well-defined since $g_r \neq 0$). Then:
		\begin{enumerate}
			\item[\textup{(1)}] $\End(\phi) \cap \oF_\fp = \F_{q^{\ell(\phi)}}$ and $\dim_{\F_q}\cM_0 = \ell(\phi)$;
			\item[\textup{(2)}] $\#\{k : e_k = 0\} = \ell(\phi)$, i.e., $e_1 = \cdots = e_{\ell(\phi)} = 0$ and $e_{\ell(\phi)+1} \geq 1$;
			\item[\textup{(3)}] if $r$ is prime, then $\ell(\phi) \in \{1, r\}$: either $\ell(\phi) = 1$ and $e_1 = 0,\, e_2 \geq 1$, or $\phi_T = t + g_r\tau^r$ and $e_1 = \cdots = e_r = 0$.
		\end{enumerate}
	\end{lem}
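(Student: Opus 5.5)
The plan is to compute the constant endomorphisms directly, then read off (2) from Proposition~\ref{prop:m0-count}, and finally do the arithmetic for (3).

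\textbf{Part (1).} An element $c\in\oF_\fp$, viewed as a $\tau$-degree-$0$ twisted polynomial, lies in $\End(\phi)$ iff $c\phi_T=\phi_T c$. Comparing coefficients of $\tau^i$ gives $c g_i = g_i c^{q^i}$ for all $i$. For $c\neq 0$ this is equivalent to $c^{q^i}=c$ for every $i$ with $g_i\neq 0$. Hence $c$ lies in the intersection of the fields $\F_{q^i}$ over those $i$, i.e.\ in $\F_{q^{\ell(\phi)}}$, since $\F_{q^a}\cap\F_{q^b}=\F_{q^{\gcd(a,b)}}$ inside $\oF_q$. Conversely, every $c\in\F_{q^{\ell(\phi)}}$ satisfies all these relations. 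Since $\ell(\phi)\mid r$ (as $g_r\ne0$), $\ell(\phi)$ is a positive divisor of $r$.

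Moreover $\cM_0=\{u\in\La:\deg_\tau u\le 0\}$ is exactly $\End(\phi)\cap\oF_\fp$ (including $0$). This is an $\F_q$-space of dimension $[\F_{q^{\ell}}:\F_q]=\ell(\phi)$, which proves (1).

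\textbf{Part (2).} This follows immediately from Proposition~\ref{prop:m0-count}: $\#\{k:e_k=0\}=\dim_{\F_q}\cM_0=\ell(\phi)$. Since the $e_k$ are nonnegative integers ordered increasingly, this says $e_1=\cdots=e_\ell=0$ and $e_{\ell+1}\geq 1$. Note $\ell\le r<r^2$, so $e_{\ell+1}$ exists.

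\textbf{Part (3).} If $r$ is prime, its only positive divisors are $1$ and $r$, so $\ell(\phi)\in\{1,r\}$. If $\ell=1$, part (2) gives $e_1=0$ and $e_2\ge1$. If $\ell=r$, every $i$ with $g_i\neq 0$ is divisible by $r$. Since $1\le i\le r$, the only such $i$ is $r$, so $g_1=\cdots=g_{r-1}=0$, i.e.\ $\phi_T=t+g_r\tau^r$, and part (2) gives $e_1=\cdots=e_r=0$. Conversely, $\phi_T=t+g_r\tau^r$ gives $\ell=r$.

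\textbf{Main obstacle.} There is little difficulty here. The only point needing care is the identification of $\cM_0$ with the constants in $\End(\phi)$. This identification uses that the $\tau$-degree defining $\cM_s$ is the literal degree in $\twist{\oF_\fp}$, which holds by the definitions.
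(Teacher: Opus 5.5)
Your proposal is correct and follows essentially the same route as the paper: you compare coefficients in $c\phi_T=\phi_T c$ to get $\bigcap_{g_i\neq 0}\F_{q^i}=\F_{q^{\ell(\phi)}}$ and identify $\cM_0$ with these constants, then deduce (2) from Proposition~\ref{prop:m0-count} and (3) from the divisor arithmetic. As in the paper, part (2) tacitly assumes that $\phi$ is supersingular, so that $\La$ is a rank-$r^2$ lattice admitting an SMB; part (1) holds for any $\phi$.
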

	
	\begin{proof}
		An element $\alpha \in \oF_\fp$ lies in $\End(\phi)$ if and only if $\alpha \phi_T = \phi_T \alpha$ in $\oF_\fp\{\tau\}$. Equating coefficients of $\tau^i$, this is the system $\alpha g_i = g_i \alpha^{q^i}$ for $0 \leq i \leq r$ (with $g_0 = t$). For $i = 0$ it is vacuous; for $i \geq 1$ with $g_i \neq 0$ it forces $\alpha = \alpha^{q^i}$, i.e., $\alpha \in \F_{q^i}$; for $i \geq 1$ with $g_i = 0$ there is no constraint. The set of such $\alpha$ is therefore $\bigcap_{g_i \neq 0}\F_{q^i} = \F_{q^{\ell(\phi)}}$, proving (1), since $\cM_0 = \End(\phi) \cap \oF_\fp$ (elements of $\tau$-degree $0$ are precisely the constants). Claim (2) is immediate from (1) and Proposition~\ref{prop:m0-count}. For (3), $\ell(\phi)$ divides $r$ by definition; when $r$ is prime its only divisors are $1$ and $r$, and $\ell(\phi) = r$ means $g_i = 0$ for $1 \leq i \leq r-1$, i.e., $\phi_T = t + g_r\tau^r$.
	\end{proof}
	
	\begin{rem}\label{rmk:hom-e1}
		For $\La = \Hom(\phi, \psi)$ with $\phi \not\cong \psi$, there is no canonical element analogous to $1 \in \End(\phi)$, so the constraint $e_1 = 0$ need not hold. In this case $e_1$ equals the minimum $\tau$-degree of a nonzero isogeny $\phi \to \psi$, a nontrivial invariant of the pair $(\phi,\psi)$. It would be interesting to determine which multisets $\{e_k\}$ actually arise for $\Hom(\phi,\psi)$ beyond the constraints of this section.
	\end{rem}
	
	Proposition~\ref{prop:dimMs-additive} converts the conjectured stabilization range into a purely numerical condition on $\{e_k\}$.
	
	\begin{cor}\label{cor:conj-equivalence}
		For every lattice $\La$, the following are equivalent:
		\begin{enumerate}
			\item[\textup{(1)}] $e_{\max}(\La) \leq (r-1)d$;
			\item[\textup{(2)}] $\dim_{\F_q}\cM_s = r(s+1) - \dfrac{r(r-1)(d-1)}{2}$ for all $s \geq (r-1)(d-1) - 1$;
			\item[\textup{(3)}] $\dim_{\F_q}\cM_{(r-1)(d-1)-1} = \dfrac{r(r-1)(d-1)}{2}$.
		\end{enumerate}
	\end{cor}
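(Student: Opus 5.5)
The plan is to prove the cycle (1)$\Rightarrow$(2)$\Rightarrow$(3)$\Rightarrow$(1), using Proposition~\ref{prop:dimMs-additive} throughout. That proposition writes $\dim_{\F_q}\cM_s$ as the stable value $r(s+1)-r(r-1)(d-1)/2$ plus the correction
\[
C(s)=\sum_{k}\max\bigl(0,\lceil (e_k-s)/r\rceil-1\bigr)\geq 0 .
\]
The $k$-th summand of $C(s)$ vanishes if and only if $\lceil (e_k-s)/r\rceil\leq 1$, i.e.\ $e_k-s\leq r$. Hence
\[
C(s)=0\iff s\geq e_{\max}(\La)-r .
\]
Every implication then reduces to comparing $e_{\max}(\La)$ with a threshold.

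\textbf{(1)$\Rightarrow$(2).} Assume $e_{\max}(\La)\leq (r-1)d$. For $s\geq (r-1)(d-1)-1$ we get
\[
e_{\max}(\La)-r\leq (r-1)d-r=(r-1)(d-1)-1\leq s ,
\]
so $C(s)=0$ and the stable formula holds.

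\textbf{(2)$\Rightarrow$(3).} Specialize (2) to $s=(r-1)(d-1)-1$. Then $r(s+1)=r(r-1)(d-1)$, and subtracting $r(r-1)(d-1)/2$ leaves $r(r-1)(d-1)/2$, which is (3).

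\textbf{(3)$\Rightarrow$(1).} By the same arithmetic, (3) says exactly that $C(s_0)=0$ at $s_0=(r-1)(d-1)-1$. By the criterion above this gives
\[
e_{\max}(\La)\leq s_0+r=(r-1)d .
\]

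There is one edge case: when $d=1$ we have $s_0=-1<0$, while Proposition~\ref{prop:dimMs-additive} is stated only for $s\geq 0$. In that case (3) should be read as $\dim\cM_{-1}=0$, which is trivially true. So I need to check separately that (1) holds when $d=1$. Proposition~\ref{prop:sum-ek} gives $\sum_k e_k=r^2(r-1)/2$, and Proposition~\ref{prop:uniform-residues} forces $\sum_k e_k\geq r\sum_{j=0}^{r-1} j=r^2(r-1)/2$. Equality therefore forces $e_k=e_k\bmod r$ for every $k$, so $e_{\max}(\La)\leq r-1=(r-1)d$. Alternatively one could simply note that the corollary is meant for $d\geq 2$. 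This edge case is the only real obstacle; the rest is bookkeeping with Proposition~\ref{prop:dimMs-additive}.
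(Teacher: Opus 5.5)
Your proof is correct and is essentially the paper's argument: both reduce everything to the criterion $C(s)=0\iff s\geq e_{\max}(\La)-r$ from Proposition~\ref{prop:dimMs-additive}, and your (3)$\Rightarrow$(1) step is the paper's (2)$\Rightarrow$(1) step, which only uses the single value $s=(r-1)(d-1)-1$. Your separate treatment of $d=1$ is correct (it is the case $d=1$ of Proposition~\ref{prop:emax-crude}) but not strictly needed, since the proofs of Theorem~\ref{thm:dimMs-elementary}, Lemma~\ref{lem:floor-identity} and Proposition~\ref{prop:dimMs-additive} go through unchanged for $s=-1$, where $\cM_{-1}=0$.
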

	
	\begin{proof}
		By Proposition~\ref{prop:dimMs-additive}, the difference between $\dim_{\F_q}\cM_s$ and the stable-range value is the non-negative correction $C(s) \colonequals \sum_k\max(0, \lceil(e_k-s)/r\rceil - 1)$, which vanishes if and only if $e_k \leq s + r$ for all $k$, i.e., $s \geq e_{\max}(\La) - r$.
		
		(1) $\Leftrightarrow$ (2): if (1) holds, then for every $s \geq (r-1)(d-1) - 1$,
		\[
		s + r \geq (r-1)(d-1) - 1 + r = (r-1)d \geq e_{\max}(\La),
		\]
		so $C(s) = 0$ and (2) follows. Conversely, if (2) holds, then $C((r-1)(d-1) - 1) = 0$, so $e_{\max}(\La) - r \leq (r-1)(d-1) - 1$, equivalently $e_{\max}(\La) \leq (r-1)d$.
		
		(2) $\Rightarrow$ (3) is trivial; the converse is the case $s = (r-1)(d-1) - 1$ of the equivalence above.
	\end{proof}
	
	\begin{thm}\label{thm:main-stabilization}
		If Conjecture~\ref{conj:emax-bound} holds for $\La$, then
		\[
		\dim_{\F_q}\cM_s = r(s+1) - \frac{r(r-1)(d-1)}{2} \qquad \text{for all } s \geq (r-1)(d-1) - 1.
		\]
		Conversely, the validity of this stabilization formula for the single value $s = (r-1)(d-1) - 1$ implies Conjecture~\ref{conj:emax-bound} for $\La$.
	\end{thm}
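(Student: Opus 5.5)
Theorem~\ref{thm:main-stabilization} is a repackaging of Corollary~\ref{cor:conj-equivalence}, so the plan is to read both directions off the additive formula of Proposition~\ref{prop:dimMs-additive}. That proposition writes $\dim_{\F_q}\cM_s$ as the stable value $r(s+1)-r(r-1)(d-1)/2$ plus the correction
\[
C(s)=\sum_{k=1}^{r^2}\max\!\left(0,\left\lceil\tfrac{e_k-s}{r}\right\rceil-1\right),
\]
which is non-negative. The whole argument therefore reduces to deciding when $C(s)=0$.

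First I will record the elementary fact that the $k$-th summand of $C(s)$ vanishes exactly when $\lceil (e_k-s)/r\rceil\le 1$. For integers this is the same as $e_k-s\le r$. Hence $C(s)=0$ if and only if $s\ge e_{\max}(\La)-r$.

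For the forward direction, assume Conjecture~\ref{conj:emax-bound} holds for $\La$, so $e_{\max}(\La)\le (r-1)d$. Take any $s\ge (r-1)(d-1)-1$. Then $s+r\ge (r-1)(d-1)-1+r=(r-1)d\ge e_{\max}(\La)$, so $C(s)=0$ and the stated formula holds.

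For the converse, assume the formula holds at $s_0=(r-1)(d-1)-1$, so $C(s_0)=0$. By the criterion above, $e_{\max}(\La)\le s_0+r=(r-1)d$, which is the conjecture for $\La$.

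The only point needing care is that $s_0\ge 0$, since the dimension formula of Theorem~\ref{thm:dimMs-elementary} is stated for $s\ge 0$. This holds whenever $(r-1)(d-1)\ge 1$, that is, for $d\ge 2$ because $r\ge2$. The case $d=1$ must be handled separately, since there $s_0=-1$. In that case I would note that the converse hypothesis refers to $\cM_{-1}=0$, whose value $0=r(r-1)(d-1)/2$ is consistent with the formula. Proving the conjecture when $d=1$ then needs an independent argument: Proposition~\ref{prop:sum-ek} gives $\sum_k e_k=r^2(r-1)/2$, and Proposition~\ref{prop:uniform-residues} forces each residue class mod $r$ to appear $r$ times. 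Together these force each $e_k$ to equal its own residue, i.e.\ $e_k\in\{0,\dots,r-1\}$, so $e_{\max}(\La)\le r-1=(r-1)d$.

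Apart from this edge case, everything is already contained in Corollary~\ref{cor:conj-equivalence}, which I would simply cite.
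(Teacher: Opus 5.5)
Your proposal is correct and is essentially the paper's proof. The paper simply invokes Corollary~\ref{cor:conj-equivalence}, i.e.\ the criterion from Proposition~\ref{prop:dimMs-additive} that the correction $C(s)$ vanishes if and only if $s\ge e_{\max}(\La)-r$. Your separate treatment of $d=1$, where $s_0=-1$, is a correct refinement that the paper passes over; your argument there is just the $d=1$ case of Proposition~\ref{prop:emax-crude}.
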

	
	\begin{proof}
		Combine (1)$\Leftrightarrow$(2)$\Leftrightarrow$(3) of Corollary~\ref{cor:conj-equivalence}.
	\end{proof}
	
	\begin{cor}\label{cor:emax-r2}
		Conjecture~\ref{conj:emax-bound} holds for every lattice $\La$ of rank $r = 2$. Equivalently, for every supersingular pair $\phi, \psi$ of rank $2$ over $\oF_\fp$ and every SMB of $\Hom(\phi,\psi)$, one has $e_{\max} \leq d$.
	\end{cor}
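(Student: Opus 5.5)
The plan is to derive the corollary from Theorem~\ref{thm:main-stabilization} (equivalently, Corollary~\ref{cor:conj-equivalence}) together with Theorem~\ref{thmB}, which the paper proves independently by automorphic methods in Section~\ref{secBrandt}. For $r=2$ the critical value is $(r-1)(d-1)-1 = d-2$, and condition (3) of Corollary~\ref{cor:conj-equivalence} reads $\dim_{\F_q}\cM_{d-2}(\phi,\psi) = d-1$. Theorem~\ref{thmB} at $s = d-2$ gives $\dim_{\F_q}\cM_{d-2} = 2(d-1) - (d-1) = d-1$. By (3)$\Rightarrow$(1), $e_{\max}(\La) \le d$ for $\La = \Hom(\phi,\psi)$. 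The case $\La = \End(\phi)$ is the special case $\psi = \phi$.

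Two points need care. First, the case $d = 1$, where $s = d-2 = -1$ lies outside the range $s\ge 0$ in which the dimension formulas were stated. For $d=1$, Proposition~\ref{prop:sum-ek} gives $\sum e_k = 2$. Proposition~\ref{prop:uniform-residues} forces two even and two odd values, so the multiset is $\{0,0,1,1\}$ and $e_{\max} = 1 = d$. Alternatively, one can use Theorem~\ref{thmB} at $s=0$ together with the Remark after Proposition~\ref{prop:dimMs-additive}: the correction vanishes iff $s \ge e_{\max} - 2$. Second, the conjecture's phrase ``every maximal $A$-order'' must be covered. By Theorem~\ref{thmGekelerMaxOrder}(3) and the fact that the left orders of ideals of $\End(\phi)$ exhaust the maximal orders up to conjugacy, every maximal order is isomorphic, as a normed lattice, to some $\End(\psi)$ for a supersingular $\psi$. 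Conjugation by a unit of $D$ preserves the reduced norm and hence the norm, so the multiset $\{e_k\}$ is unchanged.

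The main obstacle is not in this deduction, which is formal, but in Theorem~\ref{thmB} itself. I would need it to hold for every supersingular pair $(\phi,\psi)$, including non-isomorphic pairs, and in particular at the single boundary value $s = d-2$. I would check that the Brandt-matrix recursion in Section~\ref{secBrandt} really yields the exact value of each individual entry $\dim\cM_s(\phi,\psi)$, not merely averages over classes. Here I expect cusp forms to vanish at the relevant level and degree, by polynomiality of the $L$-function. Provided that this holds, the corollary follows as above.
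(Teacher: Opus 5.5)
Your proposal is correct and is essentially the paper's argument. The paper simply notes that Theorem~\ref{thmMain} is condition (2) of Corollary~\ref{cor:conj-equivalence} for $r=2$, and your use of (3)$\Rightarrow$(1) at $s=d-2$ is the same deduction; your separate treatment of $d=1$ (the sum and residue constraints force $\{0,0,1,1\}$) and of arbitrary maximal orders (conjugation preserves the reduced norm) fills in points the paper leaves implicit.

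One small slip: your ``alternative'' for $d=1$ via Theorem~\ref{thmB} at $s=0$ only gives $e_{\max}\le s+r=2$, not $e_{\max}\le 1$, so the sum/residue argument is the one you actually need. Your worry about Theorem~\ref{thmB} is settled by its statement for all pairs $(i,j)$; there, the mechanism is that the degree-$m$ sums of cuspidal Fourier coefficients vanish for $m\ge d-1$, not that the cusp forms themselves vanish.
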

	
	\begin{proof}
		Theorem~\ref{thmMain}, proved independently in Section~\ref{secBrandt}, states that for $r = 2$,
		\[
		\dim_{\F_q}\cM_s(\phi,\psi) = 2(s+1) - (d-1) \qquad \text{for all } s \geq d - 2 = (r-1)(d-1) - 1.
		\]
		This is condition (2) of Corollary~\ref{cor:conj-equivalence} for $r = 2$.
	\end{proof}
	
	For $r \geq 3$, Conjecture~\ref{conj:emax-bound} remains open. Nonetheless the constraints already established (residue equidistribution and the value of $\sum_k e_k$) suffice to give an unconditional, if non-optimal, bound on $e_{\max}(\La)$, and hence an unconditional stabilization range.
	
	\begin{prop}\label{prop:emax-crude}
		For every lattice $\La$,
		\[
		e_{\max}(\La) \leq \frac{r^2(r-1)(d-1)}{2} + (r-1).
		\]
		Consequently,
		\[
		\dim_{\F_q}\cM_s = r(s+1) - \frac{r(r-1)(d-1)}{2} \qquad \text{for all }s \geq \frac{r^2(r-1)(d-1)}{2}.
		\]
	\end{prop}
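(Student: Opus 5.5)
The plan is to bound $e_{\max}(\La)$ by the pigeonhole principle, using only the two structural facts already proved: the residue equidistribution of Proposition~\ref{prop:uniform-residues} and the sum formula of Proposition~\ref{prop:sum-ek}. We also use that every $e_k\geq 0$. We do not need $e_1=0$, which can fail for $\Hom(\phi,\psi)$ by Remark~\ref{rmk:hom-e1}.

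\emph{Step 1: pass to quotients.} For each $k$ write $e_k = r f_k + \rho_k$ with $f_k = \lfloor e_k/r\rfloor \geq 0$ and $\rho_k = e_k \bmod r \in \{0,\ldots,r-1\}$. By Proposition~\ref{prop:uniform-residues}, each residue occurs exactly $r$ times, so
\[
\sum_k \rho_k = r\cdot\tfrac{r(r-1)}{2} = \tfrac{r^2(r-1)}{2}.
\]
Subtracting this from $\sum_k e_k = r^2(r-1)d/2$ (Proposition~\ref{prop:sum-ek}) gives
\[
r\sum_k f_k = \tfrac{r^2(r-1)(d-1)}{2}, \qquad\text{so}\qquad \sum_k f_k = \tfrac{r(r-1)(d-1)}{2}.
\]

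\emph{Step 2: bound the largest term.} Since all $f_k\geq 0$, any single $f_k$ is at most the total, so $f_k \leq r(r-1)(d-1)/2$ for every $k$. Hence
\[
e_k = r f_k + \rho_k \leq \tfrac{r^2(r-1)(d-1)}{2} + (r-1),
\]
which is the first claim.

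\emph{Step 3: deduce the stable range.} Feed this bound into Corollary~\ref{cor:stable-range-per-Lambda}. That corollary requires $s \geq e_{\max}(\La) - r + 1$, and by Step 2 the right-hand side is at most $r^2(r-1)(d-1)/2$. So every $s \geq r^2(r-1)(d-1)/2$ lies in the stable range, and the formula follows.

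There is no genuine obstacle here: the argument is bookkeeping. The only points needing care are to track the residue contribution $\sum_k\rho_k$ exactly, and to bound the maximum by the whole sum $\sum_k f_k$ rather than by something sharper. The crudeness of that last step is exactly why the resulting threshold is far from the conjectured one.
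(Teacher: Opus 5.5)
Your proposal is correct and is essentially the paper's argument. The paper bounds each of the other $r^2-1$ entries below by its residue, so that $\sum_{k\neq\max}e_k\geq r^2(r-1)/2-\rho^\ast$, and subtracts this from $\sum_k e_k$; that is exactly your inequality $f_{\max}\leq\sum_k f_k$ rewritten, and it yields the same bound $e_{\max}\leq r^2(r-1)(d-1)/2+\rho^\ast$, after which both arguments conclude identically via Corollary~\ref{cor:stable-range-per-Lambda}.
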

	
	\begin{proof}
		Write $e_{\max} = e_{\max}(\La)$ and let $\rho^\ast \in \{0, 1, \ldots, r-1\}$ be its residue mod $r$. By Proposition~\ref{prop:uniform-residues}, each residue class $\rho \in \{0, 1, \ldots, r-1\}$ is represented by exactly $r$ entries among $\{e_k\}$. Among the $r^2 - 1$ entries other than $e_{\max}$, the $r$ entries in each class $\rho \neq \rho^\ast$ are $\geq \rho$ (the smallest non-negative representative), and the $r-1$ remaining entries in class $\rho^\ast$ are $\geq \rho^\ast$. Therefore
		\[
		\sum_{k=1}^{r^2-1} e_k \geq (r-1)\rho^\ast + \sum_{\substack{\rho = 0 \\ \rho \neq \rho^\ast}}^{r-1} r\rho = -\rho^\ast + r\sum_{\rho=0}^{r-1}\rho = \frac{r^2(r-1)}{2} - \rho^\ast.
		\]
		Combining with $\sum_k e_k = r^2(r-1)d/2$ from Proposition~\ref{prop:sum-ek},
		\[
		e_{\max} = \sum_k e_k - \sum_{k=1}^{r^2-1}e_k \leq \frac{r^2(r-1)d}{2} - \frac{r^2(r-1)}{2} + \rho^\ast = \frac{r^2(r-1)(d-1)}{2} + \rho^\ast.
		\]
		Since $\rho^\ast \leq r-1$, this gives the stated bound. The stabilization range now follows from Corollary~\ref{cor:stable-range-per-Lambda}, since $s \geq e_{\max} - r + 1$ holds whenever $s \geq \frac{r^2(r-1)(d-1)}{2}$.
	\end{proof}
	
	\begin{rem}\label{rmk:emax-sharp}
		The bound of Conjecture~\ref{conj:emax-bound}, when it holds, is sharp: for $\La = \End(\phi)$ with $\phi_T = t + \tau^r$ (supersingular precisely when $\gcd(r,d) = 1$), one has equality $e_{\max}(\La) = (r-1)d$. This is established by an explicit successive-minima basis in Proposition~\ref{prop:phi-vanishing}.
	\end{rem}
	
	\begin{rem}\label{rmk:emax-varies}
		The full multiset $\{e_k\}$, and in particular $e_{\max}(\La)$, genuinely depends on $\La$. We illustrate the typical landscape in rank $2$ in Section~\ref{ssCompBM}: the constraints of this section---the value of $\sum_k e_k$, residue equidistribution, the bound $e_{\max}\leq d$ from Corollary~\ref{cor:emax-r2}, and $e_1 = 0$ from Lemma~\ref{lem:e1-zero}---enumerate all SMB multisets that can possibly arise for $\End(\phi)$, and we will see that each of them does.
	\end{rem}
	%==================================================================
	
	%==================================================================
	\section{Mornev's volume and the geometric perspective}\label{sec:mornev-volume}
	
	In this section we reinterpret the results of Section~\ref{sec:smb-stabilization} through Mornev's lattice volume \cite{Mornev}. Nothing here is needed for the stabilization theorem; the purpose is rather to place that theorem in a geometric framework, in which the dimension formula \eqref{eq:dimMs-elementary} acquires the form
	\[
	\dim_{\F_q}\cM_s = rs - \log_q\Vol(\La) + h^1(\cE_{i_s}(j_s\infty)),
	\]
	a universal main term $rs - \log_q\Vol(\La)$ governed by the volume of $\La$, plus a cohomological correction $h^1(\cE_{i_s}(j_s\infty))$ that vanishes in the stable range. 
	%From this vantage point the three numerical invariants of $(\La, \norm{\cdot}_D)$ that have appeared---Mornev's volume $\Vol(\La)$, the Taguchi discriminant $\Delta(\La)$, and the SMB multiset $\{e_k\}$---are three repackagings of the same data, tied together by the residue equidistribution of Proposition~\ref{prop:uniform-residues}. 
	The geometric picture also recasts Conjecture~\ref{conj:emax-bound} as a cohomology-vanishing statement (Corollary~\ref{cor:cohom-vanishing}).
	
	Throughout, $\cO_{F,\infty} \subset F$ denotes the local ring at $\infty \in \PP^1_{\F_q}$, i.e., the rational functions regular at $\infty$, whose completion is the ring of integers $\cO_\infty$ of $\Fi$ introduced in Section~\ref{sub:lattice-structure}.
	
	%%%%%%%%%%%%%%%%%%%%%%%%%%%%%%%%%%%%%%%%%%%%%%%
	\subsection{Mornev's volume and the count formula}\label{sub:mornev-formula}
	%%%%%%%%%%%%%%%%%%%%%%%%%%%%%%%%%%%%%%%%%%%%%%%
	
	We recall the volume of an $A$-lattice from \cite[\S 1.5]{Mornev}. For each real $\alpha > 0$, let $\mu_\alpha$ be the Haar measure on $V = D_\infty$ normalized so that $\mu_\alpha(B(V,\alpha)) = 1$, where $B(V,\alpha) = \{x \in V : \norm{x}_D \leq \alpha\}$. The $\alpha$-normalized volume of $(\La, \norm{\cdot}_D)$ is
	\[
	\Vol(\La, \alpha) \colonequals \mu_\alpha(V/\La), \qquad \Vol(\La) \colonequals \Vol(\La, 1).
	\]
	With this normalization, $\mu_1(\fO_\infty)=1$.
	
	The norm $\norm{\cdot}_D$ takes values in $q^{(1/r)\Z}$, finer than the value group $q^\Z$ of $\abs{\cdot}_\infty$. Accordingly, by \cite[Cor.~1.4.3]{Mornev}, the lattice $(\La, \norm{\cdot}_D)$ corresponds to a strictly increasing chain of locally free sheaves on $X = \PP^1_{\F_q}$ together with a sequence of reals in $(q^{-1}, 1]$. In our setting the $r$ residues are
	\[
	\alpha_i \colonequals q^{(i-r)/r}, \qquad i = 1, 2, \ldots, r,
	\]
	and the chain consists of $r$ rank-$r^2$ vector bundles
	\[
	\cE_1 \subset \cE_2 \subset \cdots \subset \cE_r \subset \cE_1(\infty)
	\]
	on $X$, all agreeing on the open set $\A^1_{\F_q} = \Spec(A)$. Concretely, $\cE_i$ is obtained by gluing the locally free sheaf on $\A^1_{\F_q}$ induced by $\La$ to the rational ball
	\[
	B(D, \alpha_i) \colonequals \{x \in D : \norm{x}_D \leq \alpha_i\},
	\]
	a finitely generated $\cO_{F,\infty}$-submodule of $D$ by \cite[Cor.~1.3.4]{Mornev}, via the canonical isomorphism
	\[
	F \otimes_A \La \xrightarrow{\sim} D \xrightarrow{\sim} F \otimes_{\cO_{F,\infty}} B(D, \alpha_i).
	\]
	By \cite[Thm.~1.4.2(2)]{Mornev}, for each $i \in \{1, \ldots, r\}$ and integer $j$,
	\[
	H^0(X, \cE_i(j\infty)) = \{u \in \La : \norm{u}_D \leq \alpha_i q^j\}.
	\]
	We write $h^1(\cE_i(j\infty)) \colonequals \dim_{\F_q} H^1(X, \cE_i(j\infty))$.
	
	\begin{thm}[Mornev's count formula]\label{thm:mornev-count}
		Fix $i \in \{1, \ldots, r\}$ and an integer $j$. Then
		\[
		\#\{u \in \La : \norm{u}_D \leq \alpha_i q^j\} = \frac{q^{r^2 j}\cdot \#H^1(X, \cE_i(j\infty))}{\Vol(\La, \alpha_i)}.
		\]
	\end{thm}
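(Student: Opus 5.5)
The count formula is the Riemann--Roch theorem on $X=\PP^1_{\F_q}$, combined with Mornev's dictionary between lattice volume and the degree of the associated bundle. Since $\cE_i(j\infty)$ is a vector bundle of rank $r^2$ on a curve of genus $0$ over $\F_q$, Riemann--Roch gives
\[
h^0(\cE_i(j\infty)) - h^1(\cE_i(j\infty)) = \deg\cE_i(j\infty) + r^2 = \deg \cE_i + r^2 j + r^2 .
\]
By \cite[Thm.~1.4.2(2)]{Mornev}, $H^0(X,\cE_i(j\infty))$ is exactly the set $\{u\in\La : \norm{u}_D\leq \alpha_i q^j\}$. Hence
\[
\#\{u\in\La:\norm{u}_D\leq\alpha_iq^j\} = q^{r^2j}\cdot \#H^1(X,\cE_i(j\infty))\cdot q^{\deg\cE_i + r^2}.
\]
It then remains to prove the volume identity $\Vol(\La,\alpha_i)=q^{-\deg\cE_i - r^2}$, that is, $\log_q\Vol(\La,\alpha_i)=-\chi(\cE_i)$.

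For the volume identity I would apply Riemann--Roch with $j$ very negative in a different form: compute $\mu_{\alpha_i}(V/\La)$ directly through the adelic description of $\cE_i$. The bundle $\cE_i$ is determined by $\La$ on $\A^1$ and by the $\cO_\infty$-lattice $\widehat B_i\colonequals B(V,\alpha_i)$ at $\infty$, the completion of $B(D,\alpha_i)$. Its cohomology is computed by the two-term complex
\[
\La\oplus \widehat B_i\To V,\qquad (u,b)\mapsto u-b .
\]
Thus $H^0=\La\cap\widehat B_i$ and $H^1=V/(\La+\widehat B_i)$. With $\mu=\mu_{\alpha_i}$ normalized so that $\mu(\widehat B_i)=1$, the compact group $V/\La$ has the finite quotient $V/(\La+\widehat B_i)$, with kernel $(\La+\widehat B_i)/\La\cong \widehat B_i/(\La\cap\widehat B_i)$. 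This gives
\[
\Vol(\La,\alpha_i)=\mu(V/\La)=\#H^1(\cE_i)\cdot \frac{\mu(\widehat B_i)}{\#H^0(\cE_i)}=\frac{\#H^1(\cE_i)}{\#H^0(\cE_i)} .
\]

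Replacing $\widehat B_i$ by $\varpi_\infty^{-j}\widehat B_i=B(V,\alpha_iq^j)$ corresponds to twisting by $j\infty$ and multiplies the Haar measure of the ball by $q^{r^2j}$, since $\varpi_\infty^{-1}$ scales the $\cO_\infty$-module of rank $r^2$. The same exact-sequence argument, applied to $\cE_i(j\infty)$ with the fixed measure $\mu_{\alpha_i}$, gives
\[
\Vol(\La,\alpha_i)=q^{r^2j}\,\frac{\#H^1(\cE_i(j\infty))}{\#H^0(\cE_i(j\infty))} .
\]
Rearranging gives the claimed formula. This second route bypasses Riemann--Roch entirely and is the one I would actually write; the Riemann--Roch computation serves only as a consistency check. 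It also shows that Mornev's theorem \cite[Thm.~1.5]{Mornev} is exactly this statement.

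The main obstacle is justifying the identification of $H^1$ with the finite quotient $V/(\La+\varpi^{-j}\widehat B_i)$, i.e.\ the standard adelic computation of sheaf cohomology on $\PP^1$ from the gluing data. I would cite \cite[Cor.~1.4.3, Thm.~1.4.2]{Mornev} for it, after checking that the completion of $B(D,\alpha_i)$ is $B(V,\alpha_i)$. This check uses density of $D$ in $V$ and the fact that $\norm{\cdot}_D$ is locally constant away from $0$.
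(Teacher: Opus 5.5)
Your second route is correct and is essentially the paper's proof: the two-term (\v{C}ech/adelic) complex $\La\oplus B(V,\alpha_iq^j)\to V$ yields $\mu(V/\La)/\mu(B(V,\alpha_iq^j))=\#H^1/\#H^0$. The only difference is cosmetic: the paper takes $\mu=\mu_{\alpha_iq^j}$ and rescales via $\Vol(\La,\alpha_iq^j)=q^{-r^2j}\Vol(\La,\alpha_i)$, whereas you fix $\mu_{\alpha_i}$ and compute $\mu_{\alpha_i}(B(V,\alpha_iq^j))=q^{r^2j}$. The Riemann--Roch detour is unnecessary, since the identity $\log_q\Vol(\La,\alpha_i)=-\chi(\cE_i)$ it requires is just the $j=0$ case of that same computation.
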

	
	\begin{proof}
		The proof of \cite[Thm.~1.5.4]{Mornev} computes the cohomology of $\cE_i(j\infty)$ via a \v{C}ech complex concentrated in degrees $0$ and $1$, yielding for any translation-invariant Haar measure $\mu$ on $V$ the identity
		\[
		\frac{\mu(V/\La)}{\mu(B(V, \alpha_i q^j))} = \frac{\#H^1(X, \cE_i(j\infty))}{\#H^0(X, \cE_i(j\infty))}.
		\]
		Taking $\mu = \mu_{\alpha_i q^j}$, the left-hand side equals $\Vol(\La, \alpha_i q^j) = q^{-r^2 j}\Vol(\La, \alpha_i)$. Rearranging and using $H^0(X, \cE_i(j\infty)) = \{u \in \La : \norm{u}_D \leq \alpha_i q^j\}$ yields the claim.
	\end{proof}
	
	To convert Theorem~\ref{thm:mornev-count} into a count of $\cM_s$, we relate the slicing parameters $(i, j)$ to $s$.
	
	\begin{defn}\label{def:js-is}
		For each integer $s \geq 0$, write $s = rj_s + (i_s - r)$ with $j_s = \lceil s/r\rceil$ and $i_s \in \{1, \ldots, r\}$ determined by $i_s \equiv s \pmod r$ (with $i_s = r$ when $r \mid s$). Equivalently, $q^{s/r} = \alpha_{i_s} q^{j_s}$, so
		\[
		\cM_s = \{u \in \La : \deg_\tau(u) \leq s\} = \{u \in \La : \norm{u}_D \leq q^{s/r}\} = H^0(X, \cE_{i_s}(j_s \infty)).
		\]
	\end{defn}
	
	A short calculation, replacing $\mu_1(B(V, \alpha_i))$ by $q^{-r(r-i)}$ (since $B(V, \alpha_i) = \fP_\infty^{r-i}$ has index $q^{r(r-i)}$ in $\fO_\infty$), gives $\Vol(\La, \alpha_i) = q^{r(r-i)}\cdot \Vol(\La)$. Theorem~\ref{thm:mornev-count} then yields:
	
	\begin{cor}\label{cor:dimMs-mornev}
		For every integer $s \geq 0$,
		\begin{equation}\label{eq:dimMs-mornev}
			\dim_{\F_q}\cM_s = rs - \log_q\Vol(\La) + h^1(\cE_{i_s}(j_s\infty)).
		\end{equation}
		The correction term $h^1(\cE_{i_s}(j_s\infty))$ is a non-negative integer that vanishes for all $s \gg 0$.
	\end{cor}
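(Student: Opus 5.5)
Starting from Theorem~\ref{thm:mornev-count} with $i=i_s$, $j=j_s$, take $\log_q$ of both sides. By Definition~\ref{def:js-is}, the left-hand side is $\#\cM_s = q^{\dim_{\F_q}\cM_s}$. This gives
\[
\dim_{\F_q}\cM_s = r^2 j_s + h^1(\cE_{i_s}(j_s\infty)) - \log_q\Vol(\La,\alpha_{i_s}).
\]
The only input still needed is the scaling $\Vol(\La,\alpha_i) = q^{r(r-i)}\Vol(\La)$, which we check directly. Both $\mu_{\alpha_i}$ and $\mu_1$ are Haar measures on $V$, so they differ by a constant, namely $\mu_{\alpha_i} = \mu_1/\mu_1(B(V,\alpha_i))$. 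The ball $B(V,\alpha_i)=\{x: w(x)\ge (r-i)/r\}$ equals $\fP_\infty^{r-i}$. Each graded piece $\fP_\infty^j/\fP_\infty^{j+1}$ has $\F_q$-dimension $r$, as in the proof of Proposition~\ref{prop:uniform-residues}, so this ball has index $q^{r(r-i)}$ in $\fO_\infty$. Since $\mu_1(\fO_\infty)=1$, we get $\mu_1(B(V,\alpha_i))=q^{-r(r-i)}$, and the scaling follows.

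Substituting, we obtain $\dim_{\F_q}\cM_s = r^2j_s - r(r-i_s) - \log_q\Vol(\La) + h^1$. From $s = rj_s + i_s - r$ we have $r^2 j_s - r(r-i_s) = r(rj_s + i_s - r) = rs$, which is \eqref{eq:dimMs-mornev}. Non-negativity and integrality of $h^1$ are automatic, since it is the $\F_q$-dimension of a finite-dimensional cohomology group of a coherent sheaf on the projective curve $X$. Equivalently, Theorem~\ref{thm:mornev-count} shows $\#H^1$ is a finite power of $q$.

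For the vanishing when $s\gg0$, one route is to compare \eqref{eq:dimMs-mornev} with Corollary~\ref{cor:stable-range-per-Lambda}. For $s\ge e_{\max}(\La)-r+1$, both formulas are affine in $s$ with slope $r$. Their difference is therefore the constant $h^1 = \log_q\Vol(\La) - r + r(r-1)(d-1)/2$ for all large $s$. This alone does not show the constant is $0$, so I would instead use geometry. The chain $\cE_1\subset\cdots\subset\cE_r\subset\cE_1(\infty)$ means every $\cE_{i}(j\infty)$ sits between $\cE_1(j\infty)$ and $\cE_1((j+1)\infty)$. By Serre vanishing on $X=\PP^1$ (or directly by Grothendieck's splitting $\cE_1\cong\bigoplus\cO(n_k)$), $H^1(\cE_i(j\infty))=0$ once $j$ exceeds $-\min_k n_k - 1$ for every $i$. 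This holds because each $\cE_i$ is a vector bundle, hence a sum of line bundles, and $H^1(\cO(n))=0$ for $n\ge -1$.

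As $s\to\infty$, $j_s\to\infty$ with $i_s$ ranging over a finite set, so $h^1(\cE_{i_s}(j_s\infty))=0$ for $s\gg0$. As a byproduct, this forces $\log_q\Vol(\La)=r-r(r-1)(d-1)/2$. The main point needing care is the identification $B(V,\alpha_i)=\fP_\infty^{r-i}$ together with the normalization of Mornev's Haar measure. The rest is bookkeeping, and the vanishing is standard for vector bundles on $\PP^1$.
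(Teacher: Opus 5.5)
Your proof is correct and follows the paper's route: take $\log_q$ of Theorem~\ref{thm:mornev-count} at $(i_s,j_s)$, rescale via $\Vol(\La,\alpha_i)=q^{r(r-i)}\Vol(\La)$ using $B(V,\alpha_i)=\fP_\infty^{r-i}$, and get eventual vanishing from Serre vanishing, uniformly over the finitely many $i$. There is one sign slip in a side remark your proof does not rely on: in the stable range $h^1=\log_q\Vol(\La)+r-\tfrac{r(r-1)(d-1)}{2}$, so your byproduct should read $\log_q\Vol(\La)=\tfrac{r(r-1)(d-1)}{2}-r$, as in Corollary~\ref{cor:vol-explicit}, not its negative.
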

	
	\begin{proof}
		Theorem~\ref{thm:mornev-count} gives $\dim_{\F_q}\cM_s = r^2 j_s - \log_q\Vol(\La, \alpha_{i_s}) + h^1(\cE_{i_s}(j_s\infty))$. Substituting $\Vol(\La, \alpha_{i_s}) = q^{r(r-i_s)}\Vol(\La)$ and using $rj_s + i_s - r = s$,
		\begin{align*}
			\dim_{\F_q}\cM_s &= r^2 j_s - r(r - i_s) - \log_q\Vol(\La) + h^1(\cE_{i_s}(j_s\infty)) \\
			&= rs - \log_q\Vol(\La) + h^1(\cE_{i_s}(j_s\infty)).
		\end{align*}
		The eventual vanishing of $h^1$ follows from Serre vanishing, since $\cO_X(\infty)$ is ample.
	\end{proof}
	
	Corollary~\ref{cor:dimMs-mornev} exhibits $\dim_{\F_q}\cM_s$ as a universal main term $rs - \log_q\Vol(\La)$, depending only on the volume of $\La$, plus an eventually-vanishing cohomological correction. We treat each in turn: the volume in Section~\ref{sub:vol-via-smb}, the correction in Section~\ref{sub:vanishing}.
	
	%%%%%%%%%%%%%%%%%%%%%%%%%%%%%%%%%%%%%%%%%%%%%%%
	\subsection{The volume of \texorpdfstring{$\La$}{Lambda} via successive minima}\label{sub:vol-via-smb}
	%%%%%%%%%%%%%%%%%%%%%%%%%%%%%%%%%%%%%%%%%%%%%%%
	
	We compute $\Vol(\La)$ directly from the SMB data of Section~\ref{sub:smb-unit-ball}.
	
	\begin{prop}\label{prop:vol-via-ceilings}
		For every lattice $\La$,
		\[
		\log_q \Vol(\La) = -r^2 + \sum_{k=1}^{r^2} \lceil e_k/r \rceil.
		\]
	\end{prop}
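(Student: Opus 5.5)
The plan is to compute $\Vol(\La)=\mu_1(V/\La)$ by comparing a fundamental domain for $\La$ with the unit ball $\fO_\infty$, using the scaled SMB of Lemma~\ref{lem:scaled-SMB-basis}. Since $u_1,\ldots,u_{r^2}$ is an $A$-basis of $\La$ and an $\Fi$-basis of $V$, the map $c\mapsto\sum_k c_k u_k$ identifies $V$ with $\Fi^{r^2}$, and $\La$ with $A^{r^2}$. In these coordinates a fundamental domain for $\La$ is $\prod_k \varpi_\infty\cO_\infty$, because $\Fi = A\oplus \varpi_\infty\cO_\infty$ (with $\varpi_\infty=1/T$). So the image of $P\colonequals\{\sum_k c_ku_k : c_k\in\varpi_\infty\cO_\infty\}$ in $V/\La$ is a fundamental domain, and $\Vol(\La)=\mu_1(P)$.

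Next I compare $P$ with $\fO_\infty$. By Lemma~\ref{lem:scaled-SMB-basis}, in the same coordinates $\fO_\infty=\{\sum_k c_ku_k : \ord_\infty(c_k)\ge\lceil e_k/r\rceil\}$, i.e.\ $\prod_k \varpi_\infty^{\lceil e_k/r\rceil}\cO_\infty$. Both $P$ and $\fO_\infty$ are products of fractional ideals in the coordinate $\Fi^{r^2}$, so their Haar measures compare coordinatewise. In the $k$-th coordinate the ratio $\mu(\varpi_\infty\cO_\infty)/\mu(\varpi_\infty^{n_k}\cO_\infty)$ equals $q^{n_k-1}$, where $n_k=\lceil e_k/r\rceil$; note $n_k\ge 0$, so the ratio can be less than $1$, and one should treat it as a ratio of Haar measures rather than as an index. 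Multiplying over $k$ gives
\[
\frac{\mu_1(P)}{\mu_1(\fO_\infty)}=q^{\sum_k(\lceil e_k/r\rceil-1)}.
\]
Since $\mu_1(\fO_\infty)=1$, where $\fO_\infty=B(V,1)$, this yields $\log_q\Vol(\La)=-r^2+\sum_k\lceil e_k/r\rceil$.

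The only real point to check is that coordinatewise comparison is legitimate. The coordinate map is an $\Fi$-linear isomorphism $\Fi^{r^2}\to V$, so it transports Haar measure to a multiple of Haar measure, and ratios of measures of two sets are therefore preserved. Combined with the product description of $\fO_\infty$ from Lemma~\ref{lem:scaled-SMB-basis}, which is exactly where norm-orthogonality enters, this settles it. As a consistency check, \eqref{eq:sum-ceilings} then gives $\log_q\Vol(\La)=-r^2+r(r-1)(d+1)/2$. Plugging this into \eqref{eq:dimMs-mornev} gives $rs+r^2-r(r-1)(d+1)/2 = r(s+1)-r(r-1)(d-1)/2$, in agreement with Lemma~\ref{lem:floor-identity}.
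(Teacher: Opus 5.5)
Your proof is correct and is essentially the paper's argument: both compare the SMB $u_1,\ldots,u_{r^2}$ (an $A$-basis of $\La$) with the $\cO_\infty$-basis $\{a_ku_k\}$ of $\fO_\infty$ from Lemma~\ref{lem:scaled-SMB-basis}, and this comparison yields the factor $\prod_k\abs{a_k}^{-1}=q^{\sum_k\lceil e_k/r\rceil}$. The only difference is that the paper quotes Mornev's determinant formula $\Vol(\La)=q^{-r^2}\abs{\Omega_\La/\Omega_\infty}_\infty$, whereas you derive this special case directly from the fundamental domain $\prod_k\varpi_\infty\cO_\infty$; its measure $q^{-1}$ per coordinate relative to $\cO_\infty$ is exactly the source of the $q^{-r^2}$, which makes your version self-contained.
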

	
	\begin{proof}
		Mornev's determinant formula \cite[Lem.~1.5.3]{Mornev} specializes in our setting (curve $X = \PP^1_{\F_q}$, genus $0$, place $\infty$ of degree $1$, lattice rank $r^2$) to
		\begin{equation}\label{eq:vol-via-omega}
			\Vol(\La) = q^{-r^2}\cdot \abs{\frac{\Omega_\La}{\Omega_\infty}}_\infty,
		\end{equation}
		for any $A$-basis $\la_1, \ldots, \la_{r^2}$ of $\La$ and any $\cO_\infty$-basis $v_1, \ldots, v_{r^2}$ of $\fO_\infty$, where $\Omega_\La \colonequals \la_1 \wedge \cdots \wedge \la_{r^2}$ and $\Omega_\infty \colonequals v_1 \wedge \cdots \wedge v_{r^2}$, both viewed in the one-dimensional $\Fi$-vector space $\det_{\Fi} D_\infty$.
		
		Apply this with $\la_k = u_k$ an SMB of $\La$ and $v_k = a_k u_k$ from Lemma~\ref{lem:scaled-SMB-basis}. Then
		\[
		\frac{u_1 \wedge \cdots \wedge u_{r^2}}{(a_1 u_1) \wedge \cdots \wedge (a_{r^2}u_{r^2})} = \prod_{k=1}^{r^2} a_k^{-1},
		\]
		so $\abs{\Omega_\La/\Omega_\infty}_\infty = \prod_k \abs{a_k}_\infty^{-1} = q^{\sum_k \lceil e_k/r\rceil}$. Substituting into \eqref{eq:vol-via-omega} gives the claim.
	\end{proof}
	
	\begin{cor}\label{cor:vol-explicit}
		For every lattice $\La$,
		\[
		\log_q\Vol(\La) = \frac{r(r-1)(d-1)}{2} - r.
		\]
	\end{cor}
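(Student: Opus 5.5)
The plan is to combine Proposition~\ref{prop:vol-via-ceilings} with the evaluation of $\sum_k \lceil e_k/r\rceil$ already obtained inside the proof of Proposition~\ref{prop:sum-ek}. Proposition~\ref{prop:vol-via-ceilings} expresses $\log_q\Vol(\La)$ entirely through the ceilings $\lceil e_k/r\rceil$ of the SMB $\tau$-degrees. The intermediate identity \eqref{eq:sum-ceilings} gives their sum explicitly:
\[
\sum_{k=1}^{r^2}\lceil e_k/r\rceil = \frac{r(r-1)(d+1)}{2}.
\]
That identity came from comparing the discriminants of $\La$ over $A$ and of $\fO_\infty$ over $\cO_\infty$ through the scaled basis $\{a_ku_k\}$ of Lemma~\ref{lem:scaled-SMB-basis}.

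Substituting into Proposition~\ref{prop:vol-via-ceilings} gives
\[
\log_q\Vol(\La) = -r^2 + \frac{r(r-1)(d+1)}{2}.
\]
It remains to check that this equals $\frac{r(r-1)(d-1)}{2} - r$. The difference of the two expressions is $\frac{r(r-1)}{2}\bigl((d+1)-(d-1)\bigr) - r^2 + r = r(r-1) - r(r-1) = 0$.

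Alternatively, one could avoid \eqref{eq:sum-ceilings} and start from Proposition~\ref{prop:sum-ek} together with residue equidistribution (Proposition~\ref{prop:uniform-residues}). Via \eqref{eq:sum-ceiling-vs-sum} this gives $\sum_k\lceil e_k/r\rceil = \frac1r\sum_k e_k + \frac{r(r-1)}{2}$, which leads to the same value.

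As a consistency check, plug the result into Corollary~\ref{cor:dimMs-mornev}. The main term becomes $rs - \log_q\Vol(\La) = r(s+1) - \frac{r(r-1)(d-1)}{2}$. This is exactly the stable value from Lemma~\ref{lem:floor-identity}, as it must be once $h^1$ vanishes.

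There is no real obstacle; the only point needing care is bookkeeping of normalizations. The factor $q^{-r^2}$ in Mornev's determinant formula reflects genus $0$ and $\deg\infty = 1$, and the identity $\mu_1(\fO_\infty)=1$ must match Mornev's convention for the ball $B(V,1)=\fO_\infty$. Both are already built into Proposition~\ref{prop:vol-via-ceilings}, so the corollary follows immediately.
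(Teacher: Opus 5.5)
Your proposal is correct and is the paper's proof: substitute \eqref{eq:sum-ceilings} into Proposition~\ref{prop:vol-via-ceilings} and simplify $-r^2 + \frac{r(r-1)(d+1)}{2} = \frac{r(r-1)(d-1)}{2} - r$. Your ``alternative'' route through Proposition~\ref{prop:sum-ek} is not independent, since that proposition was itself derived from \eqref{eq:sum-ceilings}, but including it does no harm.
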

	
	\begin{proof}
		By Proposition~\ref{prop:vol-via-ceilings} and \eqref{eq:sum-ceilings} from the proof of Proposition~\ref{prop:sum-ek},
		\begin{align*}
		\log_q \Vol(\La) &= -r^2 + \sum_k \lceil e_k/r\rceil \\
		&= -r^2 + \frac{r(r-1)(d+1)}{2} \\
		&= \frac{r(r-1)(d-1)}{2} - r. \qedhere
		\end{align*}
	\end{proof}
	
	\begin{defn} The \textit{Taguchi discriminant} of $\La$ \cite[\S 4]{Taguchi} is 
	\[
	\Delta(\La) \colonequals \prod_{k=1}^{r^2}\norm{u_k}_D = q^{(1/r)\sum_k e_k},
	\]
	where $u_1, \ldots, u_{r^2}$ is any SMB of $\La$. By Proposition~\ref{prop:sum-ek}, $\log_q\Delta(\La) = r(r-1)d/2$. 
	\end{defn}
	%; by \cite[Lem.~4.2]{Taguchi} it is independent of the SMB. 
	The volume and the discriminant differ by a universal constant:
	
	\begin{thm}\label{thm:tag-vs-mor}
		For every lattice $\La$,
		\[
		\log_q\Delta(\La) - \log_q \Vol(\La) = \frac{r(r+1)}{2}.
		\]
	\end{thm}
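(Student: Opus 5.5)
The identity follows by combining two earlier computations, with no new input. Proposition~\ref{prop:sum-ek} gives $\sum_k e_k = r^2(r-1)d/2$. By the definition of the Taguchi discriminant, this yields
\[
\log_q\Delta(\La) = \tfrac{1}{r}\sum_k e_k = \tfrac{r(r-1)d}{2}.
\]
Corollary~\ref{cor:vol-explicit} gives $\log_q\Vol(\La) = r(r-1)(d-1)/2 - r$. Subtracting, the $d$-dependence cancels:
\[
\frac{r(r-1)d}{2} - \frac{r(r-1)(d-1)}{2} + r = \frac{r(r-1)}{2} + r = \frac{r(r+1)}{2}.
\]

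I would also record a more structural route, since it explains the constant and avoids relying on the exact value of $\sum_k e_k$. By Proposition~\ref{prop:vol-via-ceilings},
\[
\log_q\Delta(\La) - \log_q\Vol(\La) = r^2 - \sum_k\bigl(\lceil e_k/r\rceil - e_k/r\bigr).
\]
The summand $\lceil e_k/r\rceil - e_k/r$ depends only on $e_k \bmod r$: it is $0$ for residue $0$ and $(r-j)/r$ for residue $j \neq 0$. By Proposition~\ref{prop:uniform-residues}, each residue occurs exactly $r$ times, so the sum is $r\sum_{j=1}^{r-1}(r-j)/r = r(r-1)/2$. This is the same computation as \eqref{eq:sum-ceiling-vs-sum}. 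Hence the difference is $r^2 - r(r-1)/2 = r(r+1)/2$.

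The second route shows the result holds for any normed lattice in $D_\infty$ whose SMB degrees are equidistributed mod $r$. It uses only Lemma~\ref{lem:scaled-SMB-basis} and Mornev's determinant formula, not the discriminant of $\La$.

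There is no real obstacle; the only care needed is bookkeeping. The definition of $\Delta(\La)$ uses $\norm{u_k}_D = q^{e_k/r}$, so no factor of $r$ should be lost. The normalization $\mu_1(\fO_\infty) = 1$ has to match the $q^{-r^2}$ factor in \eqref{eq:vol-via-omega}, and that matching is already built into Proposition~\ref{prop:vol-via-ceilings}.
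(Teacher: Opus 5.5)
Your proposal is correct. Your second route is exactly the paper's proof: subtract Proposition~\ref{prop:vol-via-ceilings} from $\log_q\Delta(\La)=\frac{1}{r}\sum_k e_k$ and apply \eqref{eq:sum-ceiling-vs-sum}. Your first route, through the explicit values in Proposition~\ref{prop:sum-ek} and Corollary~\ref{cor:vol-explicit}, is an equally valid shortcut.
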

	
	\begin{proof}
		Subtract Proposition~\ref{prop:vol-via-ceilings} from $\log_q \Delta(\La) = \frac{1}{r}\sum_k e_k$ and apply \eqref{eq:sum-ceiling-vs-sum} from the proof of Proposition~\ref{prop:sum-ek}:
		\begin{align*}
			\log_q \Delta(\La) - \log_q \Vol(\La) &= \frac{1}{r}\sum_k e_k + r^2 - \sum_k \lceil e_k/r\rceil \\
			&= r^2 - \frac{r(r-1)}{2} = \frac{r(r+1)}{2}. \qedhere
		\end{align*}
	\end{proof}
	
	%%%%%%%%%%%%%%%%%%%%%%%%%%%%%%%%%%%%%%%%%%%%%%%
	\subsection{Cohomology vanishing and the geometric stability threshold}\label{sub:vanishing}
	%%%%%%%%%%%%%%%%%%%%%%%%%%%%%%%%%%%%%%%%%%%%%%%
	
	The correction term $h^1(\cE_{i_s}(j_s\infty))$ in Corollary~\ref{cor:dimMs-mornev} can be computed explicitly from the SMB $\tau$-degrees. This yields a geometric reading of the stabilization threshold of Section~\ref{sub:dim-formula}, and recasts Conjecture~\ref{conj:emax-bound} as the vanishing of $H^1$ in the conjectured stable range.
	
	By Grothendieck's theorem on $X = \PP^1_{\F_q}$, each bundle $\cE_i$ splits as $\cE_i = \bigoplus_{k=1}^{r^2}\cO(d_{i,k})$, where $\cO(d) \colonequals \cO_X(d\cdot \infty)$. The splitting is governed by the SMB.
	
	\begin{prop}\label{prop:bundle-splitting}
		For each $i \in \{1, \ldots, r\}$,
		\[
		\cE_i = \bigoplus_{k=1}^{r^2} \cO(d_{i,k}), \qquad d_{i,k} = -\lceil (e_k + r - i)/r\rceil.
		\]
	\end{prop}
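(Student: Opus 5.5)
The plan is to exhibit an explicit splitting of $\cE_i$ induced by an SMB $u_1,\ldots,u_{r^2}$ of $\La$. The bundle $\cE_i$ is glued from the $A$-module $\La$ on $\A^1_{\F_q}$ and the $\cO_{F,\infty}$-module $B(D,\alpha_i)$ near $\infty$. First I will show that the vectors $b_k\colonequals T^{-m_k}u_k$, where
\[
m_k\colonequals\lceil (e_k+r-i)/r\rceil ,
\]
form an $\cO_{F,\infty}$-basis of $B(D,\alpha_i)$. Granting this, $u_k$ is a basis vector on the affine chart and $T^{-m_k}u_k$ is a basis vector at $\infty$, so the line subbundle generated by $u_k$ is $\cO(-m_k)$, and $\cE_i$ is the direct sum of these line bundles. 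That is exactly the claim $d_{i,k}=-m_k$. (For sign conventions, I will check the case $\La=A$, with $u=1$ and $i=r$: then $B=\cO_{F,\infty}$, $m=0$, and the bundle is $\cO$, as it should be.)

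The key step is the local basis statement, which I will prove as in Lemma~\ref{lem:scaled-SMB-basis}, but with radius $\alpha_i=q^{(i-r)/r}$ in place of $1$.

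\emph{Membership.} We have $\norm{T^{-m}u_k}_D=q^{e_k/r-m}$. This is $\leq q^{(i-r)/r}$ exactly when $m\geq (e_k+r-i)/r$, and since $m$ is an integer, exactly when $m\geq m_k$. So each $b_k$ lies in the ball, and $m_k$ is the least exponent that works.

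\emph{Spanning.} Take $x=\sum c_k u_k\in D$ with $\norm{x}_D\leq\alpha_i$. Norm-orthogonality \eqref{eq:norm-orthog} gives $\abs{c_k}q^{e_k/r}\leq\alpha_i$ for every $k$. Because $\ord_\infty(c_k)$ is an integer, this forces $\ord_\infty(c_k)\geq m_k$.

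The subtle point is that $B(D,\alpha_i)$ is a rational object: its elements are in $D$, not in $D_\infty$. I would handle this in two ways.
\begin{itemize}
\item \emph{Coefficients.} For $x\in D$, the coordinates $c_k$ lie in $F$, because the $u_k$ form an $F$-basis of $D$. Then $\ord_\infty(c_k)\geq m_k$ says exactly that $c_kT^{m_k}\in\cO_{F,\infty}$, so $x$ is an $\cO_{F,\infty}$-combination of the $b_k$.
\item \emph{Independence.} Linear independence of the $b_k$ is clear.
\end{itemize}
With this, $B(D,\alpha_i)=\bigoplus_k\cO_{F,\infty}b_k$, compatibly with the decomposition $\La=\bigoplus_k Au_k$ under the gluing isomorphism.

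For the final step, I glue line by line. For each $k$, the rank-one sheaf obtained from $Au_k$ and $\cO_{F,\infty}T^{-m_k}u_k$ is isomorphic to $\cO_X(-m_k\infty)$, via $u_k\mapsto$ the section $1$ viewed in the twist. Concretely, its global sections of the twist by $j\infty$ are the $au_k$ with $\deg a\leq j-m_k$. This matches $\#\{u\in Au_k:\norm{u}_D\leq\alpha_iq^j\}$, which is a useful consistency check against Theorem~\ref{thm:dimMs-elementary}.

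I expect the main obstacle to be bookkeeping rather than a genuine difficulty. There are two points to watch:
\begin{enumerate}
\item getting the sign and normalization conventions of Mornev's gluing right, namely whether $\cO(d)$ is indexed by pole order at $\infty$;
\item justifying that norm-orthogonality, stated over $\Fi$, restricts correctly to $F$-coefficients.
\end{enumerate}
The second is immediate, since $F\subset\Fi$.
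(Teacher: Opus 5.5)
Your proposal is correct and follows essentially the paper's own argument: you rescale an SMB by $T^{-m_k}$ to get an $\cO_{F,\infty}$-basis of the rational ball $B(D,\alpha_i)$, then glue summand by summand to obtain $\cO(-m_k)$. The differences are minor: you treat general $i$ directly and spell out the passage from $\Fi$-coefficients to $F$-coefficients, whereas the paper cites Lemma~\ref{lem:scaled-SMB-basis} for $i=r$ and says ``repeat the argument'' for general $i$.
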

	
	\begin{proof}
		For $i = r$, the bundle $\cE_r$ glues $\La$ on $\A^1_{\F_q}$ to the rational unit ball $B(D, 1) = \{x \in D : \norm{x}_D \leq 1\}$ at $\infty$. By Lemma~\ref{lem:scaled-SMB-basis}, choosing $a_k \in F^\times$ with $\ord_\infty(a_k) = \lceil e_k/r\rceil$ (possible since $\ord_\infty\colon F^\times \twoheadrightarrow \Z$ is surjective; e.g., $a_k = T^{-\lceil e_k/r\rceil}$), the set $\{a_k u_k\}_{k=1}^{r^2}$ is an $\cO_{F,\infty}$-basis of $B(D, 1)$. Hence $\cE_r$ decomposes into rank-one summands: the $k$-th glues $A u_k$ on $\A^1$ to $\cO_{F,\infty}(a_k u_k)$ at $\infty$ via $u_k = a_k^{-1}(a_k u_k)$. This is precisely $\cO(-\lceil e_k/r\rceil)$, giving $d_{r,k} = -\lceil e_k/r\rceil$.
		
		For general $i$, repeat the argument with the bound $\norm{x}_D \leq \alpha_i = q^{(i-r)/r}$ in place of $\norm{x}_D \leq 1$: the rescaled vectors $\{a_k^{(i)} u_k\}$ with $\ord_\infty(a_k^{(i)}) = \lceil (e_k + r - i)/r\rceil$ form an $\cO_{F,\infty}$-basis of $B(D, \alpha_i)$. Therefore $d_{i,k} = -\lceil (e_k + r - i)/r\rceil$, recovering $d_{r,k} = -\lceil e_k/r\rceil$ when $i = r$.
	\end{proof}
	
	\begin{prop}\label{prop:h1-explicit}
		Let $\La$ be a lattice with SMB $\tau$-degrees $\{e_k\}_{k=1}^{r^2}$. For every integer $s \geq 0$,
		\begin{equation}\label{eq:h1-formula}
			h^1(\cE_{i_s}(j_s\infty)) = \sum_{k=1}^{r^2} \max\!\left(0,\ \left\lceil \frac{e_k - s}{r}\right\rceil - 1\right).
		\end{equation}
		The correction vanishes precisely when $s \geq e_{\max}(\La) - r$.
	\end{prop}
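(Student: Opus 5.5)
The quickest route compares two expressions for $\dim_{\F_q}\cM_s$ that are already available and avoids computing $H^1$ directly. Corollary~\ref{cor:dimMs-mornev} gives
\[
\dim_{\F_q}\cM_s = rs - \log_q\Vol(\La) + h^1(\cE_{i_s}(j_s\infty)),
\]
and Corollary~\ref{cor:vol-explicit} gives $\log_q\Vol(\La) = \tfrac{r(r-1)(d-1)}{2} - r$. Hence
\[
\dim_{\F_q}\cM_s = r(s+1) - \tfrac{r(r-1)(d-1)}{2} + h^1(\cE_{i_s}(j_s\infty)).
\]
Comparing with Proposition~\ref{prop:dimMs-additive}, which writes the same dimension as the same main term plus $\sum_k \max(0,\lceil (e_k-s)/r\rceil - 1)$, the main terms cancel and \eqref{eq:h1-formula} follows at once.

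As an independent check, and to make the geometric meaning transparent, I would also verify the formula directly from the splitting in Proposition~\ref{prop:bundle-splitting}. By that proposition, $\cE_{i_s}(j_s\infty) = \bigoplus_k \cO(j_s + d_{i_s,k})$, and on $\PP^1$ we have $h^1(\cO(n)) = \max(0,-n-1)$. Using $rj_s = s + r - i_s$, one computes
\[
j_s + d_{i_s,k} = j_s - \left\lceil \frac{e_k + r - i_s}{r}\right\rceil = -\left\lceil \frac{e_k - s}{r}\right\rceil,
\]
since subtracting the integer $j_s$ inside the ceiling turns $e_k + r - i_s$ into $e_k + r - i_s - rj_s = e_k - s$. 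Therefore $h^1 = \sum_k \max\bigl(0, \lceil (e_k-s)/r\rceil - 1\bigr)$, which agrees with \eqref{eq:h1-formula}.

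For the vanishing criterion, the $k$-th summand is zero if and only if $\lceil (e_k - s)/r\rceil \le 1$. This is equivalent to $(e_k - s)/r \le 1$, that is, $e_k \le s + r$. All summands vanish exactly when $e_{\max}(\La) \le s + r$, which is $s \ge e_{\max}(\La) - r$.

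The only step I expect to need care is the bookkeeping in the definition of $(i_s, j_s)$, namely the identity $rj_s + i_s - r = s$ together with the edge case $r \mid s$. Since the first argument reduces the statement to results already established, this bookkeeping serves only the direct check.
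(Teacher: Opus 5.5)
Your proposal is correct. Your ``independent check'' is exactly the paper's proof. The paper reads off $h^1$ from the splitting $\cE_{i_s}(j_s\infty)=\bigoplus_k\cO(d_{i_s,k}+j_s)$ of Proposition~\ref{prop:bundle-splitting} together with $h^1(\cO(n))=\max(0,-n-1)$, carrying out your one-line ceiling manipulation in two steps via $\lceil (n+r)/r\rceil=\lceil n/r\rceil+1$. Your vanishing criterion also matches the paper's.

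Your primary argument is a genuinely different route. You subtract two already-established dimension formulas: Corollaries~\ref{cor:dimMs-mornev} and \ref{cor:vol-explicit} on one side, Proposition~\ref{prop:dimMs-additive} on the other. This is legitimate. None of those results depends on the present proposition, and the main terms do agree, since $rs-\log_q\Vol(\La)=r(s+1)-r(r-1)(d-1)/2$.

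Note what the comparison costs, though. It runs the paper's subsequent remark in reverse. The paper computes $h^1$ directly so that, combined with Corollary~\ref{cor:dimMs-mornev}, the geometric picture gives an \emph{independent} reproof of the stabilization formula. If $h^1$ is instead obtained by comparison, that reproof becomes circular.

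The comparison route also relies on heavier inputs that all cancel in the end: Mornev's count formula and volume normalization, the discriminant computation of Proposition~\ref{prop:sum-ek}, residue equidistribution, and Theorem~\ref{thm:dimMs-elementary}. The splitting argument needs only Lemma~\ref{lem:scaled-SMB-basis} (through Proposition~\ref{prop:bundle-splitting}) and the cohomology of line bundles on $\PP^1$. It also gives more: each summand is identified individually as $h^1$ of the line bundle $\cO(-\lceil (e_k-s)/r\rceil)$, not just the total.
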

	
	\begin{proof}
		By Proposition~\ref{prop:bundle-splitting}, $\cE_{i_s}(j_s\infty) = \bigoplus_k \cO(d_{i_s,k} + j_s)$. On $\PP^1$,
		\[
		h^1(\cO(d)) = \max(0, -d-1),
		\]
		so
		\[
		h^1(\cE_{i_s}(j_s\infty)) = \sum_k \max\bigl(0,\, \lceil (e_k + r - i_s)/r\rceil - j_s - 1\bigr).
		\]
		For any integer $n$, $\lceil (n+r)/r\rceil = \lceil n/r\rceil + 1$, hence $\lceil (e_k + r - i_s)/r\rceil - 1 = \lceil (e_k - i_s)/r\rceil$. The relation $rj_s + i_s = s + r$ gives
		\[
		\lceil (e_k - i_s)/r\rceil - j_s = \lceil (e_k - i_s - rj_s)/r\rceil = \lceil (e_k - s - r)/r\rceil = \lceil (e_k - s)/r\rceil - 1,
		\]
		which proves \eqref{eq:h1-formula}. The final claim is immediate: $\lceil (e_k - s)/r\rceil \leq 1$ if and only if $e_k \leq s + r$.
	\end{proof}
	
	Comparing \eqref{eq:h1-formula} with the additive form \eqref{eq:dimMs-additive} of the dimension formula, we see that the cohomological correction $h^1(\cE_{i_s}(j_s\infty))$ is exactly the non-negative correction term of Proposition~\ref{prop:dimMs-additive}. Thus the two perspectives agree term by term, and the geometric route reproves the stabilization formula of Section~\ref{sub:dim-formula}: the stable range is precisely the range of cohomology vanishing.
	
	\begin{cor}\label{cor:cohom-vanishing}
		For every lattice $\La$, the following are equivalent:
		\begin{enumerate}
			\item[\textup{(1)}] $e_{\max}(\La) \leq (r-1)d$ \textup{(}Conjecture~\ref{conj:emax-bound}\textup{)};
			\item[\textup{(2)}] $H^1(X, \cE_{i_s}(j_s\infty)) = 0$ for all $s \geq (r-1)(d-1) - 1$.
		\end{enumerate}
		In particular, Conjecture~\ref{conj:emax-bound} is equivalent to the vanishing of $H^1$ throughout the conjectured stable range, and holds unconditionally for $r = 2$ by Corollary~\ref{cor:emax-r2}.
	\end{cor}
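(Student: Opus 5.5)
The plan is to reduce everything to the explicit formula \eqref{eq:h1-formula} of Proposition~\ref{prop:h1-explicit}. That formula expresses $h^1(\cE_{i_s}(j_s\infty))$ as the non-negative sum
\[
C(s)=\sum_{k=1}^{r^2}\max\!\left(0,\left\lceil\frac{e_k-s}{r}\right\rceil-1\right),
\]
which is exactly the correction term of Proposition~\ref{prop:dimMs-additive}. Condition (2) is therefore equivalent to $C(s)=0$ for all $s\geq (r-1)(d-1)-1$. Each summand of $C(s)$ is a non-negative integer, so $C(s)=0$ holds if and only if $\lceil (e_k-s)/r\rceil\leq 1$ for every $k$. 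This is the same as $e_k\leq s+r$ for every $k$, i.e.\ $s\geq e_{\max}(\La)-r$, which is the last assertion of Proposition~\ref{prop:h1-explicit}.

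For (1)$\Rightarrow$(2), assume $e_{\max}(\La)\leq (r-1)d$. For any $s\geq (r-1)(d-1)-1$ we have
\[
s+r\geq (r-1)(d-1)-1+r=(r-1)d\geq e_{\max}(\La),
\]
so $H^1$ vanishes by the criterion above.

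For (2)$\Rightarrow$(1), it suffices to use the single value $s_0=(r-1)(d-1)-1$. This $s_0$ is non-negative when $d\geq 2$ or $r\geq 2$ with $d\geq 2$. The case $d=1$ needs a remark: then $s_0=-1$ lies outside the domain $s\geq 0$ of Definition~\ref{def:js-is}. In that case I would instead take $s=0$ and argue directly. Proposition~\ref{prop:sum-ek} gives $\sum_k e_k=r^2(r-1)/2$, and Proposition~\ref{prop:uniform-residues} forces each $e_k$ to equal its residue. Hence $e_{\max}\leq r-1=(r-1)d$, and (1) holds automatically. Alternatively, one can note that the equivalence is stated for $s$ in the range where everything is defined. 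For $s_0\geq 0$, vanishing of $H^1$ at $s_0$ gives $e_{\max}(\La)\leq s_0+r=(r-1)d$.

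Finally, the "in particular" clause follows by combining this equivalence with Corollary~\ref{cor:emax-r2}, which establishes (1) for $r=2$. Equivalently, one can invoke Corollary~\ref{cor:conj-equivalence}, since by \eqref{eq:dimMs-additive} and \eqref{eq:h1-formula} its condition (2) is literally the vanishing of $h^1$. The argument is essentially formal once Proposition~\ref{prop:h1-explicit} is in hand. The only point requiring care is the boundary case $d=1$, where the threshold is negative; it is handled by the residue-equidistribution argument above.
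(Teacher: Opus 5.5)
Your proof is correct and follows the paper's argument: both rest on the criterion from Proposition~\ref{prop:h1-explicit} that $H^1(X,\cE_{i_s}(j_s\infty))=0$ if and only if $s\geq e_{\max}(\La)-r$, and then compare this with the threshold $(r-1)(d-1)-1$. Your separate treatment of $d=1$ is a valid refinement of a boundary case the paper's one-line proof passes over. There the threshold $(r-1)(d-1)-1=-1$ falls outside Definition~\ref{def:js-is}, and Propositions~\ref{prop:sum-ek} and \ref{prop:uniform-residues} force $e_{\max}(\La)\leq r-1$.
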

	
	\begin{proof}
		By Proposition~\ref{prop:h1-explicit}, $H^1(X, \cE_{i_s}(j_s\infty)) = 0$ if and only if $s \geq e_{\max}(\La) - r$. Hence vanishing holds for all $s \geq (r-1)(d-1) - 1$ if and only if $e_{\max}(\La) - r \leq (r-1)(d-1) - 1$, i.e., $e_{\max}(\La) \leq (r-1)d$.
	\end{proof}

		%==================================================================
	
	%==================================================================
	\section{Rank-2 case: stabilization via theta series}\label{secBrandt}
	
	In this section we prove Theorem~\ref{thmMain}, the explicit rank-$2$ form of the stabilization formula $\dim_{\F_q}\cM_s = 2(s+1)-(d-1)$ for all $s\geq d-2$. The argument is automorphic: we encode the dimensions $\dim_{\F_q}\cM_s$ via a Brandt-type theta series on the Bruhat--Tits tree, decompose it into Eisenstein and cuspidal parts, and use the polynomiality of the cuspidal $L$-function to obtain a recursion that pins down $\dim_{\F_q}\cM_s$ exactly. This route is independent of the successive-minima machinery of Section~\ref{sec:smb-stabilization}, and by Corollary~\ref{cor:emax-r2} it establishes Conjecture~\ref{conj:emax-bound} for $r=2$.
	
	Fix a prime $\fp\lhd A$ of degree $d$ and let $D$ be the quaternion algebra over $F$ ramified only at $\fp$ and $\infty$. Two left ideals $I$ and $J$ of a maximal $A$-order in $D$ are in the same class if $J=Ib$ for some $b\in D^\times$. The set of left ideal classes is finite, and its cardinality $n$ is independent of the choice of maximal order. Fix a maximal order, let $\{I_1, \dots, I_n\}$ be a set of representatives of the distinct ideal classes with $I_1$ equal to the chosen order, and for $1\leq i\leq n$ let $\La_i$ be the right order of $I_i$. Each conjugacy class of maximal orders in $D$ is represented (once or twice) by some $\La_i$; we let $\mathbf{t}\leq n$ denote the number of distinct conjugacy classes. Then $n$ is the \textit{class number} and $\mathbf{t}$ is the \textit{type number} of $D$. With this notation,
	\[
	\deg H_\fp(x) = \begin{cases} n & \text{if $d$ is even,} \\ n-1 & \text{if $d$ is odd,} \end{cases}
	\]
	and the number of irreducible factors of $H_\fp(x)$ in $\F_\fp[x]$ is $\mathbf{t}$ if $d$ is even, $\mathbf{t}-1$ if $d$ is odd, where $H_\fp(x)$ is as in Definition~\ref{defHp}; see also \cite[Prop. 4.6]{Gekeler91}.
	
	By Theorem~\ref{thmGekelerMaxOrder}, the left ideal classes of any maximal $A$-order in $D$ are in bijection with the isomorphism classes of supersingular rank-$2$ Drinfeld modules over $\oF_\fp$. Let $\phi_1, \dots, \phi_n$ be representatives of these isomorphism classes, arranged so that $\La_i = \End(\phi_i)$. For each ordered pair $1\leq i, j\leq n$, set
	\[
	\La_{ij} \colonequals \Hom(\phi_i, \phi_j),
	\]
	an $(\La_j, \La_i)$-bimodule that is free of rank $4$ as an $A$-module on either side. By Section~\ref{sub:lattice-structure}, $\La_{ij}$ is a lattice in the sense of that section, and we write
	\[
	\cM_s^{ij} \colonequals \{u\in \La_{ij} : \deg_\tau(u)\leq s\}, \qquad s\geq 0.
	\]
	The goal of this section is to compute $\dim_{\F_q}\cM_s^{ij}$ for all $s\geq d-2$.
	
	We begin with the Brandt matrices that encode isogeny counts between the $\phi_i$. For monic $\fm\in A_+$, set
	\begin{equation}\label{eqb1}
		b_{ij}(\fm) \colonequals \frac{1}{\#\Aut(\phi_j)} \#\{u\in \La_{ij} : (\fN(u))=(\fm)\},
	\end{equation}
	where $\fN(u)$ is the norm of the isogeny $u$ defined in \eqref{eqNormu}. By Lemma~\ref{lemAut}, $\#\Aut(\phi_j) = q-1$ if $j(\phi_j)\neq 0$ and $q^2-1$ if $j(\phi_j)=0$.
	
	\begin{defn}\label{defnBrandt}
		The $n\times n$ matrix $B(\fm) = (b_{ij}(\fm))_{1\leq i,j\leq n} \in \Mat_n(\Z)$ is the $\fm$-th \textit{Brandt matrix} in $A$-characteristic $\fp$.
	\end{defn}
	
	\begin{rem}
		The integer $b_{ij}(\fm)$ counts isogenies $u\colon\phi_i\to\phi_j$ with $\fN(u)=(\fm)$ up to equivalence, where two isogenies are equivalent if they have the same kernel, i.e., if they differ by an automorphism of $\phi_j$.
	\end{rem}
	
	\begin{rem}\label{rem3.3}
		The lattice $\La_{ij}$ admits a purely quaternion-algebraic description: there is a canonical isomorphism
		\[
		\La_{ij} \cong I_j^{-1}I_i
		\]
		of $(\La_j, \La_i)$-bimodules \cite[Prop. 2.7]{PapikianJNT05}. Let $\fn_{ij}\in F$ be a generator of the reduced norm ideal $\Nr(I_j^{-1}I_i) = \langle \Nr(u) : u\in I_j^{-1}I_i\rangle_A$, where $\Nr\colon D\to F$ is the reduced norm, and set
		\[
		w_j \colonequals \#\Aut(\phi_j)/(q-1) = \#(\La_j^\times)/(q-1).
		\]
		Then $b_{ij}(\fm)$ can be computed from the quaternion data alone as
		\begin{equation}\label{eqb2}
			b_{ij}(\fm) = \frac{\#\{u\in I_j^{-1}I_i : (\Nr(u)/\fn_{ij}) = (\fm)\}}{(q-1)w_j};
		\end{equation}
		the equality of \eqref{eqb1} and \eqref{eqb2} uses \cite[Lem. 3.10]{Gekeler91} and is established in \cite[Prop. 2.7]{PapikianJNT05}.
	\end{rem}
	
	These counts feed into a recursion for $\#\cM_s^{ij}$. By \eqref{eqtaudeg}, $\deg_\tau(u) = \deg_T(\fN(u))$, so
	\[
	\cM_s^{ij} = \bigsqcup_{m=0}^s\bigsqcup_{\substack{\fm\in A_+\\ \deg(\fm)=m}} \{u\in \La_{ij} : (\fN(u)) = (\fm)\}.
	\]
	Counting the inner sets via \eqref{eqb1},
	\begin{equation}\label{eqMsRecursion}
		\#\cM_s^{ij} = \#\cM_{s-1}^{ij} + \#\Aut(\phi_j) \sum_{\substack{\fm\in A_+\\ \deg(\fm)=s}} b_{ij}(\fm), \qquad s\geq 1.
	\end{equation}
	The task is therefore to compute $\sum_{\deg\fm = s} b_{ij}(\fm)$ for $s$ large.
	
	We do this by recognizing the generating function of the $b_{ij}(\fm)$ as a theta series on the Bruhat--Tits tree, decomposing it spectrally, and exploiting the polynomiality of the resulting cuspidal $L$-function. With $\phi_i, \phi_j$ fixed, we drop the double index and write $b(\fm) = b_{ij}(\fm)$. Let $\varpi_\infty$ be a fixed uniformizer at $\infty$. Define Fourier coefficients \cite[p. 735]{WY} (see also \cite[p. 271]{PapikianJNT05})
	\begin{align*}
		c_0(\varpi_\infty^k) &= q^{-k}/w_j, & k&\in \Z, \\
		c(\fm \varpi_\infty^k) &= b(\fm) q^{-k}, & k&\geq 0.
	\end{align*}
	These coefficients determine a unique $\G_0(\fp)$-invariant harmonic cochain $\Theta$ on the Bruhat--Tits tree $\cT$ of $\PGL_2(\Fi)$, where $\G_0(\fp)\subset \GL_2(A)$ is the Hecke congruence subgroup of level $\fp$; see \cite{WY,PapikianJNT05} for the definitions of harmonic cochains and the construction of $\Theta$, and for the proof that $\Theta$ is harmonic and $\G_0(\fp)$-invariant.
	
	The $\C$-vector space $H(\cT,\C)^{\G_0(\fp)}$ decomposes as \cite[Prop. 3.4]{PapikianJNT05}
	\[
	H(\cT,\C)^{\G_0(\fp)} = \C\, E_\fp \oplus H_!(\cT,\C)^{\G_0(\fp)},
	\]
	where $H_!$ denotes the cuspidal subspace and $E_\fp$ is the Eisenstein series with Fourier coefficients
	\begin{align*}
		c_0(E_\fp, \varpi_\infty^k) &= \frac{\abs{\fp}-1}{q^2-1} q^{-k}, & k&\in \Z, \\
		c(E_\fp, \fm \varpi_\infty^k) &= \sigma_\fp(\fm) q^{-k}, & k&\geq 0,
	\end{align*}
	where
	\[
	\sigma_\fp(\fm) = \sum_{\substack{\fm'\in A_+\\ \fm'\mid \fm,\ \fp\nmid \fm'}} \abs{\fm'}.
	\]
	Cusp forms in $H(\cT,\C)^{\G_0(\fp)}$ are characterized by the vanishing of their constant Fourier coefficients $c_0$; hence
	\[
	\Theta_c \colonequals w_j \frac{\abs{\fp}-1}{q^2-1}\Theta - E_\fp
	\]
	is a cusp form, with Fourier coefficients
	\[
	c(\Theta_c, \fm\varpi_\infty^k) = \left(w_j\frac{\abs{\fp}-1}{q^2-1} b(\fm) - \sigma_\fp(\fm)\right) q^{-k} = c(\fm) q^{-k}.
	\]
	
	For a cusp form $f\in H_!(\cT,\C)^{\G_0(\fp)}$, Weil's $L$-function is \cite[p. 273]{PapikianJNT05}
	\[
	L(f,s) = \sum_{\beta\text{ pos. div.}} c(f,\beta) \abs{\beta}^{-s}, \qquad \abs{\beta} = q^{\deg(\beta)},
	\]
	the sum running over all non-negative divisors (including those supported at $\infty$). For $f = \Theta_c$,
	\begin{align*}
		L(\Theta_c, s) &= \sum_{k=0}^\infty \sum_{\fm\in A_+} c(\fm) q^{-k}\abs{\fm}^{-s} q^{-ks} \\
		&= \left(\sum_{\fm\in A_+}\frac{c(\fm)}{\abs{\fm}^s}\right)\sum_{k=0}^\infty q^{-k(1+s)} \\
		&= \frac{1}{1-q^{-(1+s)}}\sum_{\fm\in A_+}\frac{c(\fm)}{\abs{\fm}^s}.
	\end{align*}
	
	When $d\leq 2$ there are no nonzero cusp forms on $\G_0(\fp)$, so $\Theta_c = 0$ and $c(\fm) = 0$ identically. We assume $d\geq 3$ until further notice; the case $d\leq 2$ is handled by the same argument in trivialized form and is recorded at the end of the section.
	
	For $d\geq 3$, deep results of Drinfeld, Deligne, and Grothendieck imply that $L(f,s)$ is a polynomial in $q^{-s}$ of degree $d-3$; see \cite[Ch. 2]{Pap03}. Write
	\[
	L(\Theta_c, s) = a_0 + a_1 q^{-s} + \cdots + a_{d-3} (q^{-s})^{d-3} \in \Z[q^{-s}].
	\]
	Setting $x = q^{-s}$,
	\[
	(a_0 + a_1 x + \cdots + a_{d-3} x^{d-3})(1-q^{-1}x) = \sum_{m=0}^\infty x^m \sum_{\substack{\fm\in A_+\\ \deg\fm = m}} c(\fm),
	\]
	so
	\[
	\sum_{\substack{\fm\in A_+\\ \deg\fm = m}} c(\fm) = 0 \qquad \text{for } m\geq d-1,
	\]
	equivalently
	\begin{equation}\label{eqSumb}
		\sum_{\substack{\fm\in A_+\\ \deg\fm = m}} b(\fm) = \frac{1}{w_j}\cdot \frac{q^2-1}{\abs{\fp}-1} \sum_{\substack{\fm\in A_+\\ \deg\fm = m}}\sigma_\fp(\fm), \qquad m\geq d-1.
	\end{equation}
	
	To turn \eqref{eqSumb} into a usable identity we need an explicit formula for the right-hand side.
	
	\begin{lem}\label{lem_sigmap}
		For every $m\geq 0$,
		\[
		\sum_{\substack{\fm\in A_+\\ \deg\fm = m}}\sigma_\fp(\fm) = \begin{cases}
			\dfrac{q^m(q^{m+1}-1)}{q-1}, & m\leq d-1, \\[1ex]
			\dfrac{q^{2m-d+1}(\abs{\fp}-1)}{q-1}, & m\geq d-1.
		\end{cases}
		\]
	\end{lem}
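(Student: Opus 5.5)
We will compute the sum by swapping the order of summation. Write $\fm = \fm'\fn$ with $\fm',\fn\in A_+$, where $\fp\nmid\fm'$. Then
\[
\sum_{\deg\fm=m}\sigma_\fp(\fm) = \sum_{\substack{\fm'\in A_+,\ \fp\nmid\fm'\\ \deg\fm'\leq m}} \abs{\fm'}\cdot \#\{\fn\in A_+ : \deg\fn = m-\deg\fm'\} = \sum_{k=0}^m q^k\, N_k\, q^{m-k} = q^m\sum_{k=0}^m N_k,
\]
where $N_k \colonequals \#\{\fm'\in A_+ : \deg\fm'=k,\ \fp\nmid\fm'\}$. Everything therefore reduces to computing $N_k$.

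The count of monic polynomials of degree $k$ is $q^k$. Those divisible by $\fp$ are exactly $\fp\fn$ with $\fn$ monic of degree $k-d$, so
\[
N_k = q^k \quad \text{for } k<d, \qquad N_k = q^k - q^{k-d} \quad \text{for } k\geq d.
\]

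For $m\leq d-1$ we have $\sum_{k=0}^m N_k = (q^{m+1}-1)/(q-1)$, which gives the first case.

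For $m\geq d-1$ we compute
\[
\sum_{k=0}^m N_k = \frac{q^{m+1}-1}{q-1} - \sum_{k=d}^m q^{k-d} = \frac{q^{m+1}-1}{q-1} - \frac{q^{m-d+1}-1}{q-1} = \frac{q^{m-d+1}(q^d-1)}{q-1}.
\]
Multiplying by $q^m$ and using $\abs{\fp}=q^d$ yields $q^{2m-d+1}(\abs{\fp}-1)/(q-1)$.

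At $m=d-1$ the second sum is empty (its value $(q^0-1)/(q-1)$ is $0$), so the two formulas agree there, consistent with the overlap in the statement.

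The only point needing care is the factorization bijection $\fm\leftrightarrow(\fm',\fn)$ in the first step. It holds because $\sigma_\fp(\fm)$ sums over the monic divisors $\fm'$ of $\fm$ with $\fp\nmid\fm'$, and each such divisor determines its monic cofactor $\fn$ uniquely. There is no real obstacle; the remaining steps are geometric-series bookkeeping.
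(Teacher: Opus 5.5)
Your proof is correct, and it takes a more elementary route than the paper's.

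The paper works with the generating function $f(x)=\sum_{\fm\in A_+}\sigma_\fp(\fm)x^{\deg\fm}$. It strips off the $\fp$-part of $\fm$ and expands the remaining sum as an Euler product over primes $\fq\neq\fp$. It then uses $\prod_\fq(1-x^{\deg\fq})^{-1}=(1-qx)^{-1}$ and $\prod_\fq(1-q^{\deg\fq}x^{\deg\fq})^{-1}=(1-q^2x)^{-1}$ to reach $f(x)=(1-q^dx^d)/((1-qx)(1-q^2x))$, and extracts coefficients by partial fractions.

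You instead swap the order of summation in the divisor sum. This reduces everything to counting monic polynomials of a given degree not divisible by $\fp$. It needs no multiplicativity of $\sigma_\fp$ and no Euler-product identities, only $\#\{\fn\in A_+:\deg\fn=k\}=q^k$.

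In generating-function terms, your computation is the factorization $f(x)=\bigl(\sum_{\fp\nmid\fm'}\abs{\fm'}x^{\deg\fm'}\bigr)\bigl(\sum_{\fn\in A_+}x^{\deg\fn}\bigr)=\frac{1-q^dx^d}{1-q^2x}\cdot\frac{1}{1-qx}$, carried out coefficient by coefficient. So you land on exactly the paper's closed form.

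The paper's packaging has one mild advantage. It presents the answer as a rational function of $x=q^{-s}$, the same variable in which the cuspidal $L$-function identity of that section is written. The factor $1-q^dx^d$ also makes the change of behavior at $m=d$ visible at a glance. Your argument is shorter and self-contained, and your justification of the bijection $(\fm,\fm')\leftrightarrow(\fm',\fn)$, the only step needing care, is adequate.
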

	
	\begin{proof}
		Write $s(m)$ for the sum, and consider the generating function
		\[
		f(x) = \sum_{\fm\in A_+}\sigma_\fp(\fm) x^{\deg(\fm)} = \sum_{m\geq 0} s(m) x^m.
		\]
		Writing $\fm = \fp^a\fn$ with $a\geq 0$ and $\gcd(\fn,\fp) = 1$, the multiplicativity $\sigma_\fp(\fm) = \sigma_\fp(\fn)$ gives
		\[
		f(x) = \frac{1}{1-x^d}\cdot g(x), \qquad g(x) = \sum_{\substack{\fn\in A_+\\ \gcd(\fn,\fp)=1}} \sigma_\fp(\fn) x^{\deg(\fn)}.
		\]
		By multiplicativity of $\sigma_\fp$ on integers coprime to $\fp$,
		\[
		g(x) = \prod_{\fq\neq \fp}\left(\sum_{e\geq 0}\sigma_\fp(\fq^e) x^{e\deg\fq}\right).
		\]
		For each prime $\fq\neq \fp$ of degree $\delta$, $\sigma_\fp(\fq^e) = (q^{(e+1)\delta}-1)/(q-1)$, so
		\[
		\sum_{e\geq 0}\sigma_\fp(\fq^e) x^{e\delta} = \frac{1}{(1-x^\delta)(1-q^\delta x^\delta)}.
		\]
		Using the standard identities
		\[
		\prod_\fq \frac{1}{1-x^{\deg\fq}} = \frac{1}{1-qx}, \qquad \prod_\fq \frac{1}{1-q^{\deg\fq} x^{\deg\fq}} = \frac{1}{1-q^2 x},
		\]
		we obtain
		\[
		g(x) = \frac{(1-x^d)(1-q^d x^d)}{(1-qx)(1-q^2 x)}, \qquad f(x) = \frac{1-q^d x^d}{(1-qx)(1-q^2 x)}.
		\]
		Partial fractions give
		\[
		\frac{1}{(1-qx)(1-q^2 x)} = \frac{1}{q-1}\left(\frac{q}{1-q^2 x} - \frac{1}{1-qx}\right),
		\]
		so %$[x^m]\bigl((1-qx)(1-q^2 x)\bigr)^{-1} = q^m(q^{m+1}-1)/(q-1)$, and
		\[
		s(m) %= [x^m] f(x) 
		= \frac{q^m(q^{m+1}-1)}{q-1} - q^d \cdot \frac{q^{m-d}(q^{m-d+1}-1)}{q-1}\cdot \mathbf{1}_{m\geq d}.
		\]
		For $m\leq d-1$ the second term vanishes; for $m\geq d$ a direct simplification gives $s(m) = q^{2m-d+1}(q^d-1)/(q-1)$. At the boundary $m=d-1$ both expressions agree.
	\end{proof}
	
	Combining \eqref{eqMsRecursion}, \eqref{eqSumb}, and Lemma~\ref{lem_sigmap}, and using $\#\Aut(\phi_j) = (q-1)w_j$, we obtain
	\begin{equation}\label{eq:cMs-recursion}
		\#\cM_s^{ij} = \#\cM_{s-1}^{ij} + q^{2s-d+1}(q^2-1), \qquad s\geq d-1.
	\end{equation}
	Since $\cM_s^{ij}$ is a finite-dimensional $\F_q$-vector space, $\#\cM_s^{ij} = q^{\dim_{\F_q}\cM_s^{ij}}$. Writing $m_s \colonequals \dim_{\F_q}\cM_s^{ij}$ (omitting the indices $i,j$, fixed throughout), \eqref{eq:cMs-recursion} becomes
	\[
	q^{m_{s-1}}(q^{m_s - m_{s-1}} - 1) = q^{2s-d+1}(q^2-1), \qquad s\geq d-1.
	\]
	Both $q^{m_s-m_{s-1}}-1$ and $q^2-1$ are coprime to $q$, while $q^{m_{s-1}}$ and $q^{2s-d+1}$ are powers of $q$. Equating the two factorizations forces
	\[
	m_{s-1} = 2s - (d-1) \quad \text{and} \quad q^{m_s-m_{s-1}} - 1 = q^2 - 1,
	\]
	hence $m_s - m_{s-1} = 2$. Reindexing ($s-1 \mapsto s$) gives the formula for all $s\geq d-2$.
	
	\begin{thm}\label{thmMain}
		For every $1\leq i, j\leq n$ and every $s\geq d-2$,
		\[
		\dim_{\F_q}\cM_s^{ij} = 2(s+1) - (d-1).
		\]
		In particular, $\dim_{\F_q}\cM_{d-2}^{ij} = d-1$.
	\end{thm}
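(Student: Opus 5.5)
The argument preceding the statement already supplies almost everything; the plan is to assemble it carefully, treat the case $d\le 2$ separately, and double-check the one arithmetic step that pins down $m_{s-1}$.

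\emph{Case $d\ge 3$.} Fix $i,j$ and write $m_s=\dim_{\F_q}\cM_s^{ij}$.

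First, I take the recursion \eqref{eqMsRecursion} and substitute \eqref{eqSumb}. The latter is valid for degree $m\ge d-1$ because $L(\Theta_c,s)$ is a polynomial of degree $d-3$ in $q^{-s}$. After that I apply Lemma~\ref{lem_sigmap} in its second range. Since $\#\Aut(\phi_j)=(q-1)w_j$, the factor $w_j$ cancels, and the increment becomes
\[
(q-1)\cdot\frac{q^2-1}{\abs{\fp}-1}\cdot\frac{q^{2s-d+1}(\abs{\fp}-1)}{q-1}=q^{2s-d+1}(q^2-1).
\]
This gives \eqref{eq:cMs-recursion} for every $s\ge d-1$.

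Next, I rewrite it as $q^{m_{s-1}}(q^{m_s-m_{s-1}}-1)=q^{2s-d+1}(q^2-1)$. Here $m_s>m_{s-1}$, since the right side is positive. By uniqueness of the decomposition of a positive integer as a power of $q$ times an integer prime to $q$, I get $m_{s-1}=2s-d+1$. Setting $s'=s-1$, this reads $m_{s'}=2(s'+1)-(d-1)$ for all $s'\ge d-2$, which is exactly the claim. The case $s'=d-2$ gives $\dim\cM_{d-2}^{ij}=d-1$.

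\emph{Case $d\le 2$.} There are no cusp forms, so $c(\fm)=0$ for all $\fm$ and \eqref{eqSumb} holds for every $m\ge 0$. For $d=1$ the range $m\ge d-1=0$ is already covered, so the recursion holds for all $s\ge 1$. The same factorization then gives $m_{s-1}=2s$, i.e.\ $m_{s'}=2(s'+1)$ for all $s'\ge 0$. Since the threshold is $s'\ge -1$, this range covers everything that is needed.

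For $d=2$ the recursion holds for $s\ge 1$, and factorization yields $m_{s-1}=2s-1$, i.e.\ $m_{s'}=2s'+1$ for $s'\ge 0=d-2$, as required. In this case Lemma~\ref{lem_sigmap} is used in the range $m\ge d-1=1$, and that application is legitimate.

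The main obstacle is the input that $L(\Theta_c,s)$ is a polynomial of degree exactly $d-3$, or more precisely at most $d-3$, which is all the argument needs. This rests on the Drinfeld--Deligne--Grothendieck results cited from \cite{Pap03} and is simply assumed here. Two smaller points need care. One is that the Fourier expansion identity correctly matches coefficients of $x^m$ after multiplying by $(1-q^{-1}x)$: the vanishing of $\sum_{\deg\fm=m}c(\fm)$ for $m\ge d-1$ follows because the product has degree at most $d-2$. The other is the $p$-adic factorization step, where I must note that $q^{m_s-m_{s-1}}-1$ is prime to $q$ only because $m_s-m_{s-1}\ge 1$.
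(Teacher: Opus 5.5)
Your proposal is correct and follows the paper's argument essentially step for step: you substitute the cuspidal-vanishing identity \eqref{eqSumb} and Lemma~\ref{lem_sigmap} into the counting recursion to get \eqref{eq:cMs-recursion}, you read off $m_{s-1}=2s-(d-1)$ from the $q$-adic factorization, and you handle $d\le 2$ through the absence of cusp forms, exactly as in Remark~\ref{rmk:small-d}. The only difference is that you omit the paper's extra deduction $m_s-m_{s-1}=2$, which is indeed redundant once $m_{s-1}$ is known for every $s\ge d-1$.
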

	
	\begin{rem}\label{rmk:small-d}
		For $d\leq 2$, the cuspidal subspace $H_!(\cT,\C)^{\G_0(\fp)}$ is zero, so $\Theta = \frac{q^2-1}{w_j(\abs{\fp}-1)} E_\fp$ exactly and $c(\fm) = 0$ for all $\fm$. The vanishing $\sum_{\deg\fm = m} c(\fm) = 0$ in \eqref{eqSumb} now holds for all $m\geq 0$ trivially, and the same recursion \eqref{eq:cMs-recursion} runs from $s = 1$, giving $\dim_{\F_q}\cM_s^{ij} = 2(s+1) - (d-1)$ for all $s \geq 0$.
		
		In these small-degree cases there is a single supersingular isomorphism class over $\oF_\fp$, so $n = \mathbf{t} = 1$ and the indices $i,j$ disappear:
		\begin{itemize}
			\item if $d = 1$, the unique class is represented by $\phi_T = t + \tau^2$, and $\dim_{\F_q}\cM_s = 2(s+1)$ for all $s \geq 0$;
			\item if $d = 2$, the unique class admits the $\F_\fp$-rational representative
			\[
			\phi_T = t + \tau + \frac{1}{t^q - t}\tau^2,
			\]
			obtained as the Legendre form $\phi_T = t + \tau + x_0\tau^2$ at the unique root $x_0 \in \F_\fp$ of $H_\fp(x)$, and $\dim_{\F_q}\cM_s = 2s + 1$ for all $s \geq 0$.
		\end{itemize}
	\end{rem}
	
	%==================================================================

%==================================================================
\section{Examples}\label{sExample}

We collect explicit computations of $\cM_s(\phi,\psi)$ in two settings. Throughout this section we abbreviate
\[
m_s(\phi,\psi) \colonequals \dim_{\F_q}\cM_s(\phi,\psi), \qquad m_s(\phi) \colonequals m_s(\phi,\phi).
\]
Subsection~\ref{ssDiagonal} treats the special case $\phi_T = t + \tau^r$, where the stabilization formula and an explicit successive-minima basis admit closed forms via the Sylvester--Frobenius theorem. Subsection~\ref{ssCompBM} describes an algorithm for computing $\cM_s(\phi,\psi)$ in rank $2$ and applies it to enumerate, for two specific primes, the pre-stable dimension tuples $(m_0(\phi),\ldots,m_{d-2}(\phi))$ across all supersingular isomorphism classes. The observed tuples are in bijection with the SMB multisets allowed by the constraints of Section~\ref{sec:smb-stabilization}, providing an experimental confirmation that those constraints are not only necessary but realized.

%==================================================================
\subsection{The case \texorpdfstring{$\phi_T = t + \tau^r$}{phi\_T = t + tau\textasciicircum r}}\label{ssDiagonal}

Throughout this subsection $r\geq 2$ is an integer, $g = \gcd(r,d)$, and $\phi$ denotes the Drinfeld module of rank $r$ over $\F_\fp$ defined by
\[
\phi_T = t + \tau^r.
\]
We derive a stabilization theorem for $m_s(\phi)$ by elementary means and exhibit an explicit successive-minima basis for $\End(\phi)$, recovering by direct calculation the formula of Section~\ref{sec:smb-stabilization}.

\begin{lem}\label{lemDSpecial}
	We have $H(\phi) = r/g$. In particular, $\phi$ is supersingular if and only if $g = 1$.
\end{lem}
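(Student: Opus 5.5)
The plan is to compute $\phi_\fp$ directly and read off its height. Since $\phi_T = t + \tau^r$ and $t\in\F_\fp$, every $\phi_a$ lies in the commutative subring $\F_\fp[\tau^r]\cap\twist{\F_\fp}$ generated by $t$ and $\tau^r$. Writing $\sigma = \tau^r$, we have $\sigma c = c^{q^r}\sigma$ for $c\in\F_\fp$. The first step is to identify $\phi_a$ explicitly. For $a = \sum_i c_i T^i$, $\phi_a = \sum_i c_i (t+\sigma)^i$. Because $t$ and $\sigma$ do not commute unless $t\in\F_{q^r}$, I would avoid expanding the binomial. Instead I would use the criterion $H(\phi)=\Ht(\phi_\fp)/d$ together with the description of $\phi[\fp]$.

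The cleanest route is to observe that $\phi$ is defined over $\F_\fp$ and to compute the Frobenius. Put $\pi = \tau^d\in\twist{\F_\fp}$; it commutes with $\phi_T$, since $t^{q^d}=t$. Moreover $\pi^r = \sigma^d = (\phi_T - t)^d$ in the appropriate sense. The key step is to compare $\Ht(\phi_\fp)$ with the valuation of $\pi$ at $\fp$ inside the commutative $F$-algebra $F(\pi)\subset \End(\phi)\otimes F$. Here $\pi$ satisfies $\pi^r\cdot(\text{unit})$-type relations. Concretely, $\tau^{rd}=\sigma^d$ commutes with everything in $\F_\fp\{\sigma\}$. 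Using the standard fact that $\phi_\fp = \epsilon\,\pi^{rd/d}\cdots$, more precisely that $\phi_\fp = \tau^{rd}$ up to a unit in the centralizer when the Frobenius $\pi_\phi=\tau^d$ generates the ideal above $\fp$, one gets: the minimal polynomial of $\pi=\tau^d$ over $F$ has degree $r/g$. This is because $\pi^{r/g}=\sigma^{d/g}$, and $\tau^{d\cdot j}\in\F_\fp\{\sigma\}$ exactly when $r\mid dj$, i.e.\ when $(r/g)\mid j$. The intended chain of implications is: $\F_\fp\{\sigma\}\otimes F$ contains $\phi(A)$ and the field $F(\pi)$ has degree $[F(\pi):F]=r/g$. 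By Gekeler's theory (\cite[Sec.~4]{Gekeler91}, cf.\ the height formula $H(\phi)=r/[F(\pi):F]\cdot(\text{number of primes above }\fp)$), $\fp$ has a unique prime above it in $F(\pi)$, since $\pi$ is a $\fp$-adic uniformizer-power. Hence $H(\phi)=r/[F(\pi):F]\cdot\ldots$; equivalently, the height equals $r$ divided by the degree $g$ of the central extension.

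I would rather do an elementary direct computation, as a safer alternative. Reduce to checking $\phi[\fp]$, using the criterion that $\phi$ is supersingular iff $\phi[\fp]=0$, i.e.\ iff $\phi_\fp(x)$ is purely inseparable. A root $\beta\in\oF_\fp$ of $\phi_\fp(x)$ is a $\fp$-torsion point. On the submodule $\F_{q^{r}}$-span I would look for $\fp$-torsion of the form $\beta$ with $\beta^{q^r} = -t\beta + \gamma$-type solutions. Notably, on $\F_{q^{\mathrm{lcm}(r,d)}}$ the map $\tau^r$ is $\F_\fp$-semilinear. The action $T\mapsto t+\sigma$ makes $\overline{\F}_\fp$ a module over $\F_\fp[\sigma]$ twisted, and nontrivial $\fp$-torsion arises exactly when the separable part of $\phi_\fp$ is nonconstant. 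In the end the height is $r/g$ via the count $\#\phi[\fp] = q^{d(r - r/g)}$.

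The main obstacle is pinning down this torsion count rigorously. I expect the Frobenius/degree argument, namely that $\tau^{dj}\in\F_\fp\{\tau^r\}$ iff $(r/g)\mid j$, giving $[F(\pi):F]=r/g$, combined with $\pi^{r/g}$ generating a power of $\fp$, to be the decisive step. Given the lemma $H(\phi)=r/[F(\pi):F]$, the supersingularity statement follows immediately, since $H(\phi)=r$ iff $g=1$.
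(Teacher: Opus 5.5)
There is a genuine gap. Your Frobenius route rests on two false claims and, taken literally, gives the wrong answer, and your elementary route is only announced. First, $[F(\pi):F]=r/g$ is false, and its justification is a non sequitur. Whether $\tau^{dj}$ lies in $\F_\fp\{\tau^r\}$ says nothing about whether it lies in $F(\phi_T)$, because $\F_\fp\{\tau^r\}$ is much bigger than $\phi(A)$; it is not even commutative unless $d\mid r$, contrary to your opening sentence. In fact $[F(\pi):F]=r$ for every $g$. For instance, if $r=d=2$ then $t\in\F_{q^2}$ commutes with $\tau^2$, so $\pi=\tau^2=\phi_T-t$ and $F(\pi)=\F_{q^2}(T)$ has degree $2$. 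Your claim would instead put $\pi\in F$, which forces $\End^0_{\F_\fp}(\phi)$ to be central of dimension $r^2$ over $F$, i.e.\ $\phi$ supersingular. But here $\phi_\fp=(\phi_T-t)(\phi_T-t^q)=\tau^2(\tau^2+t-t^q)$ has height $2$, so $\phi$ is ordinary.

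Second, the height formula is misquoted. The correct statement for Drinfeld modules over finite fields is $H(\phi)=r_2\,[F(\pi)_{\tilde\fp}:F_\fp]$, where $r_2=r/[F(\pi):F]$ and $\tilde\fp$ is the unique place of $F(\pi)$ dividing $\pi$. The number of primes above $\fp$ does not enter. Moreover, for $g>1$ there is more than one such prime, since $\fp$ already splits into $g$ primes in $\F_{q^g}(T)\subseteq F(\pi)$. Taken at face value, your formula $H(\phi)=r/[F(\pi):F]$ together with your value $r/g$ gives $H(\phi)=g$. That would mean $H(\phi)=r$ iff $g=r$, the reverse of the lemma. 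The sentence ``equivalently, the height equals $r$ divided by the degree $g$'' does not follow from anything before it. The elementary alternative just asserts $\#\phi[\fp]=q^{d(r-r/g)}$, which is a restatement of the claim.

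The missing ingredient is an explicit handle on $\phi_\fp$, or on a power of $\pi$, and this is exactly where the shape $\phi_T=t+\tau^r$ must be used. Scalars in $\F_{q^r}$ commute with $t+\tau^r$, so $\phi$ extends to a Drinfeld $\F_{q^r}[T]$-module of rank $1$; the paper exploits this via the lift $\Phi_T=T+\tau^r$.
\begin{itemize}
\item In $\F_{q^r}[T]$ one has $\fp=\fP_1\cdots\fP_g$ with $\deg\fP_i=d/g$.
\item A rank-one module has height one, so the factor $\fP_1$ lying in the kernel of the structure map acts as $\tau^{rd/g}$.
\item The remaining factors have nonzero constant term.
\item Hence $\Ht(\phi_\fp)=rd/g$ and $H(\phi)=r/g$.
\end{itemize}

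If you want to keep the Frobenius viewpoint, the same computation yields $\pi^{r/g}=\tau^{rd/g}=\fP_1(\phi_T)$. Here $\fP_1\in\F_{q^g}[T]$, because primes of degree $d/g$ in $\F_{q^g}[T]$ stay prime in $\F_{q^r}[T]$ when $\gcd(r/g,d/g)=1$. Since $\pi$ commutes with $\F_{q^g}$, and $\fP_1(\phi_T)$ generates $\F_{q^g}(T)$ over $F$, you get $F(\pi)=\F_{q^g}(T)(\pi)$, which is Eisenstein of degree $r/g$ over $\F_{q^g}(T)$. Thus $r_2=1$ and $[F(\pi)_{\tilde\fp}:F_\fp]=r/g$, recovering $H(\phi)=r/g$. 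However, this works only after doing the paper's rank-one computation, and it additionally requires the structure theory of $\End^0_{\F_\fp}(\phi)$. So it gains nothing over the direct argument.
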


\begin{proof}
	Let $\Phi$ be the Drinfeld module over $F$ defined by $\Phi_T = T + \tau^r$, so that $\phi$ is the reduction of $\Phi$ modulo $\fp$. Set $A' = \F_{q^r}[T]$ and $F' = \F_{q^r}(T)$. The prime $\fp$ decomposes in $A'$ as
	\[
	\fp = \fP_1 \cdots \fP_g,
	\]
	where each $\fP_i$ has degree $d/g$. Viewing $\Phi$ as a Drinfeld module over $F'$, its reduction modulo $\fP_1$ is isomorphic to $\phi$ over the degree-$r/g$ extension of $\F_\fp$, via the natural embedding $A/\fp \hookrightarrow A'/\fP_1$.
	
	On the other hand, $\Phi$ over $F'$ is a Drinfeld $A'$-module of rank $1$, and the reduction of a rank-$1$ Drinfeld module has height $1$. Therefore $\Phi_{\fP_1} \equiv \tau^{rd/g} \pmod{\fP_1}$. For $2\leq i\leq g$, the constant term of $\Phi_{\fP_i}$ is $\fP_i$, nonzero modulo $\fP_1$. Letting $\alpha\neq 0$ be the product of the images of $\fP_2,\ldots,\fP_g$ in $A'/\fP_1$,
	\[
	\Ht(\phi_\fp) = \Ht(\Phi_\fp \bmod \fP_1) = \Ht\bigl(\alpha \tau^{rd/g} + \text{higher-degree terms}\bigr) = rd/g.
	\]
	Hence $H(\phi) = r/g$.
\end{proof}

\begin{rem}
	Lemma~\ref{lemDSpecial} strengthens \cite[Example 4.4.5]{PapikianGTM} and simplifies the argument there.
\end{rem}

\begin{lem}\label{lemExplicitEnd}
	Assume $g = 1$, so that $\phi$ is supersingular. Then $\End(\phi) = \F_{q^r}[\phi_T, \tau^d]$.
\end{lem}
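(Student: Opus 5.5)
The plan is to prove the two inclusions separately. For $R\colonequals \F_{q^r}[\phi_T,\tau^d]\subseteq \End(\phi)$, I would check the generators. For $\alpha\in\F_{q^r}$ we have $\tau^r\alpha=\alpha^{q^r}\tau^r=\alpha\tau^r$, and $\alpha$ commutes with $t$, so $\alpha$ commutes with $\phi_T$. Since $t\in\F_\fp=\F_{q^d}$, we have $\tau^d t=t^{q^d}\tau^d=t\tau^d$, and $\tau^d$ commutes with $\tau^r$, so $\tau^d\in\End(\phi)$. For the reverse inclusion I would not attack the recursion $u_i(t^{q^i}-t)=u_{i-r}^{q^r}-u_{i-r}$ on coefficients directly. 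Instead I would show that $R$ is itself a maximal $A$-order. Then $R\subseteq\End(\phi)$, together with maximality of $R$ and the fact that $\End(\phi)$ is an $A$-order (Theorem~\ref{thmGekelerMaxOrder}), forces equality.

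Set $\pi\colonequals\tau^d$. Then $\pi\alpha=\alpha^{q^d}\pi$ for $\alpha\in\F_{q^r}$, and $\alpha\mapsto\alpha^{q^d}$ generates $\Gal(\F_{q^r}/\F_q)$ because $\gcd(r,d)=1$. Also $\pi^r=\tau^{rd}$. Since $\phi$ is supersingular (Lemma~\ref{lemDSpecial}), $\phi_\fp=c\,\tau^{rd}$ for some $c\in\F_\fp^\times$. I would confirm this by comparing leading coefficients: the leading coefficient of $\phi_\fp$ is $1$, so in fact $\pi^r=\phi_\fp$, i.e.\ $\pi^r$ is the image of $\fp$ under $A\to\End(\phi)$. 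Write $A'\colonequals\F_{q^r}[\phi_T]\cong\F_{q^r}\otimes_{\F_q}A$, which is commutative since $\F_{q^r}$ commutes with $\phi_T$. The claim is then
\[
R=\bigoplus_{j=0}^{r-1}A'\pi^j ,
\]
with $\pi a'=\sigma(a')\pi$, where $\sigma$ is the Frobenius $q^d$ acting on the $\F_{q^r}$-coefficients, and $\pi^r=\fp$. This right-hand side is closed under multiplication by these relations and contains the generators, so it equals $R$. For freeness of rank $r^2$, I would take an $\F_q$-basis $\beta_1,\dots,\beta_r$ of $\F_{q^r}$ and use the elements $\beta_l\pi^j$. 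In a relation $\sum \phi_{a_{lj}}\beta_l\pi^j=0$ the $\tau$-degrees are $r\deg a_{lj}+dj$. For distinct $j<r$ these degrees lie in distinct classes mod $r$, again because $\gcd(r,d)=1$, so the leading terms cannot cancel across different $j$. Within a fixed $j$, the leading coefficients are $\F_q$-combinations of the $\beta_l$, hence nonzero unless all the $a_{lj}$ vanish.

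Thus $R$ is the cyclic $A$-order $(A'/A,\sigma,\fp)$. Here $A'/A$ is the constant field extension, which is unramified everywhere, and $\fp$ is inert in $A'$ because $\gcd(r,d)=1$. The last step is the discriminant. I would compute $\disc(R/A)$ from the reduced trace on the basis $\beta_l\pi^j$. The trace of $x\pi^j$ vanishes for $0<j<r$, so the Gram matrix is block-antidiagonal in $j$. It pairs $A'\pi^j$ with $A'\pi^{r-j}$, and since $\pi^j\pi^{r-j}=\fp$ each such pairing contributes a factor of $\fp$ per $A'$-rank. This should give $\disc(R/A)=\fp^{r(r-1)}\cdot\disc(A'/A)^r=\fp^{r(r-1)}$, using that $A'/A$ is étale. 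That value equals $\disc(\End(\phi)/A)$, as used in the proof of Proposition~\ref{prop:sum-ek}. Since $R\subseteq\End(\phi)$ with equal discriminants, the index is trivial and $R=\End(\phi)$.

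I expect the main obstacle to be this discriminant computation, namely getting the exact power of $\fp$ and making sure no stray factor from $\disc(A'/A)$ or from the twisting appears. If the trace computation gets messy, the fallback is a local argument: at every prime $\ne\fp$, $R$ is locally isomorphic to $\Mat_r(\cdot)$ because $\pi$ becomes a unit. At $\fp$, $R_\fp$ is the standard maximal order of the local division algebra with uniformizer $\pi$ and $\pi^r=\fp$, using that $\fp$ is inert in $A'$. Both local orders are maximal, so $R$ is maximal.
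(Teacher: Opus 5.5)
Your proposal is correct and follows the paper's proof: you identify $\F_{q^r}[\phi_T,\tau^d]$ with the cyclic order $A'[\pi]/(\pi^r-\fp)$, where $\sigma$ generates $\Gal(F'/F)$ because $\gcd(r,d)=1$. You then compute its discriminant as $\fp^{r(r-1)}$ and conclude equality with the maximal order $\End(\phi)$. The difference is that you write out several points the paper leaves to a ``direct computation'' and a citation to \cite{BG}: that $\pi^r=\phi_\fp$ (leading coefficient $1$), the freeness of rank $r^2$ via $\tau$-degrees in distinct residue classes mod $r$, the block-antidiagonal trace computation, and a local maximality check as a backup.
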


\begin{proof}
	Again set $A' = \F_{q^r}[T]$ and $F' = \F_{q^r}(T)$. The inclusion $\F_{q^r}[\phi_T,\tau^d]\subseteq \End(\phi)$ is clear. The ring $\F_{q^r}[\phi_T,\tau^d]$ is isomorphic to $\cO\colonequals A'[\pi]/(\pi^r - \fp)$, with the commutation relation
	\[
	\pi(f_0 + f_1 T + \cdots + f_s T^s) = (f_0^{q^d} + f_1^{q^d}T + \cdots + f_s^{q^d}T^s)\pi.
	\]
	Since $g = 1$, the automorphism of $F'$ induced by $\pi$ generates $\Gal(F'/F)$, so $D\colonequals \cO\otimes_A F$ is a cyclic $F$-algebra of dimension $r^2$ and $\cO$ is an $A$-order in $D$. A direct computation shows $\disc(\cO/A) = \fp^{r(r-1)}$, hence $\cO$ is maximal in $D$; see \cite[Sec.~6]{BG}. Therefore $\F_{q^r}[\phi_T,\tau^d] = \End(\phi)$.
\end{proof}

\begin{prop}\label{proptaur}
	Assume $g = 1$. Then for all $s\geq (r-1)(d-1) - 1$,
	\[
	m_s(\phi) = r(s+1) - \frac{r(r-1)(d-1)}{2}.
	\]
\end{prop}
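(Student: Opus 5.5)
The plan is to make the successive-minima data of $\La = \End(\phi)$ completely explicit using Lemma~\ref{lemExplicitEnd}, and then apply the general machinery of Section~\ref{sec:smb-stabilization}. I will also keep a direct count as a check that does not depend on Taguchi's framework.

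First I fix coordinates. By Lemma~\ref{lemExplicitEnd} and its proof, $\End(\phi)\cong \cO = A'[\pi]/(\pi^r-\fp)$ with $\pi\mapsto\tau^d$ and $T\mapsto \phi_T$. Since $\cO$ is free over $A'=\F_{q^r}[T]$ on $1,\pi,\ldots,\pi^{r-1}$, every $u\in\End(\phi)$ has a unique expression
\[
u=\sum_{i=0}^{r-1} f_i(\phi_T)\,\tau^{di},\qquad f_i\in \F_{q^r}[X].
\]
The constants from $\F_{q^r}$ commute with $\phi_T=t+\tau^r$ because $\tau^r$ fixes $\F_{q^r}$. Hence for $f_i\neq 0$ the term $f_i(\phi_T)\tau^{di}$ has $\tau$-degree exactly $r\deg f_i+di$, with a nonzero leading coefficient coming from the leading coefficient of $f_i$.

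The key step, and the only real point, is the absence of cancellation. Since $\gcd(r,d)=1$, the numbers $di$ for $0\le i\le r-1$ are pairwise distinct modulo $r$. Therefore the $\tau$-degrees $r\deg f_i+di$ of the nonzero summands are pairwise distinct, and
\[
\deg_\tau u=\max_i\bigl(r\deg f_i+di\bigr).
\]
Now choose an $\F_q$-basis $c_1,\ldots,c_r$ of $\F_{q^r}$. The $r^2$ elements $c_j\tau^{di}$ form an $A$-basis of $\La$, where $A$ acts through $\phi_T$. Their $\tau$-degrees are $di$, each occurring with multiplicity $r$.

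I then check norm-orthogonality for this basis. Within a fixed $i$, a combination $\sum_j a_j(\phi_T)c_j\tau^{di}$ has leading coefficient a nonzero $\F_q$-combination of the $c_j$. This is nonzero by linear independence, so again there is no cancellation. Hence $\{c_j\tau^{di}\}$ is an SMB, and
\[
\{e_k\}=\{0^{(r)},d^{(r)},2d^{(r)},\ldots,((r-1)d)^{(r)}\},\qquad e_{\max}(\La)=(r-1)d.
\]
As a consistency check, this multiset visibly satisfies Proposition~\ref{prop:uniform-residues} and Proposition~\ref{prop:sum-ek}, since $r\cdot d\cdot r(r-1)/2=r^2(r-1)d/2$.

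Finally I conclude. Proposition~\ref{prop:dimMs-additive}, together with the remark following it, gives the stable formula whenever
\[
s\ge e_{\max}(\La)-r=(r-1)d-r=(r-1)(d-1)-1,
\]
which is exactly the range in the statement. Alternatively, the count can be done by hand without quoting Taguchi. The no-cancellation step gives
\[
m_s(\phi)=r\sum_{i=0}^{r-1}\max\!\left(0,\Bigl\lfloor\tfrac{s-di}{r}\Bigr\rfloor+1\right).
\]
In the stated range every term is positive, and the sum can then be evaluated exactly as in Lemma~\ref{lem:floor-identity}, using that $di \bmod r$ runs over all residues once. The main obstacle is the no-cancellation argument above; it is the one place where the hypothesis $g=1$ (beyond supersingularity) is genuinely used.
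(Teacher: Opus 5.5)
Your proof is correct, but it takes a different route from the paper's proof of this proposition.

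The paper never constructs an SMB here. It notes that $m_0(\phi)=r$, and that each step $m_{s+1}(\phi)-m_s(\phi)$ is at most $r$, with equality exactly when $\End(\phi)$ contains an element of $\tau$-degree $s+1$, i.e.\ when $s+1\in r\Z_{\geq 0}+d\Z_{\geq 0}$. It then cites the Sylvester--Frobenius theorem to add up the jumps: every integer $\geq (r-1)(d-1)$ is representable, and exactly $(r-1)(d-1)/2$ integers below that bound are.

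You instead prove the normal form $u=\sum_{i=0}^{r-1}f_i(\phi_T)\tau^{di}$, with no cancellation of leading terms, and read off the SMB $\{c_j\tau^{di}\}$. This is the content of Proposition~\ref{prop:phi-vanishing}. The paper proves that proposition separately via the valuation on the cyclic algebra $D_\infty$, whereas you check orthogonality directly on $\tau$-degrees. You then either quote Proposition~\ref{prop:dimMs-additive} or sum floors by hand.

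The paper's route is shorter once Sylvester's theorem is granted. Yours buys three things:
\begin{enumerate}
\item It makes explicit the fact, used without comment in the paper's ``i.e.'', that every $\tau$-degree occurring in $\End(\phi)$ lies in $r\Z_{\geq0}+d\Z_{\geq0}$.
\item It gives $m_s(\phi)$ in the pre-stable range and $e_{\max}=(r-1)d$ at the same time.
\item Through the floor-sum identity, it in effect reproves the Sylvester--Frobenius count rather than citing it.
\end{enumerate}
Only your direct count is independent of the general machinery of Section~\ref{sec:smb-stabilization}. Your first route leans on Propositions~\ref{prop:uniform-residues} and \ref{prop:sum-ek}.

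Two small corrections:
\begin{enumerate}
\item At $s=(r-1)(d-1)-1$ the $i=r-1$ term is $\lfloor -r/r\rfloor+1=0$. So the terms are nonnegative rather than positive, which is all you need for the maximum to be non-binding.
\item You verify orthogonality only for $A$-coefficients. That suffices for the count, and it does imply the basis realizes the successive minima, since the elements of norm at most $\lambda$ span exactly the $c_j\tau^{di}$ with $\norm{c_j\tau^{di}}_D\leq\lambda$. That implication deserves a sentence before you call the basis an SMB.
\end{enumerate}
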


\begin{proof}
	We have $\cM_0(\phi) = \F_{q^r}$, so $m_0(\phi) = r$. For any $s\geq 0$, any difference of two elements of $\tau$-degree $s+1$ in $\End(\phi) = \F_{q^r}[\phi_T,\tau^d]$ lies in $\cM_s$ (after scaling), so $m_{s+1}(\phi) \leq m_s(\phi) + r$, with equality if and only if $\End(\phi)$ contains an element of $\tau$-degree exactly $s+1$, i.e., if and only if $s+1$ is representable as $nr + md$ with $n,m\geq 0$.
	
	Since $\gcd(r,d) = 1$, the Sylvester--Frobenius theorem \cite{Sylvester1884} states that every integer $\geq (r-1)(d-1)$ is representable, and exactly $(r-1)(d-1)/2$ integers in \[\{0,1,\ldots,(r-1)(d-1)-1\}\] are representable (counting $0$). Subtracting the contribution of $0$, the dimension $m_s(\phi)$ increases by $r$ at exactly $(r-1)(d-1)/2 - 1$ values of $s$ in the range $1 \leq s \leq (r-1)(d-1) - 1$, and at every subsequent $s$. Combining with $m_0(\phi) = r$,
	\begin{align*}
			&m_{(r-1)(d-1)-1}(\phi) = \frac{r(r-1)(d-1)}{2},\\
		& m_s(\phi) = m_{s-1}(\phi) + r \text{ for } s\geq (r-1)(d-1),
	\end{align*}
	which is the stated formula.
\end{proof}

\begin{prop}\label{prop:phi-vanishing}
	Assume $g = 1$ and set $\La \colonequals \End(\phi)$. The multiset of $\tau$-degrees of an SMB of $\La$ is
	\[
	\{bd : 0\leq b\leq r-1\}, \qquad \text{each value appearing with multiplicity } r.
	\]
	In particular, $e_{\max}(\La) = (r-1)d$, and Conjecture~\ref{conj:emax-bound} holds for $\La$ with equality.
\end{prop}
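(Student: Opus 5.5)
Since $\End(\phi)=\F_{q^r}[\phi_T,\tau^d]$ by Lemma~\ref{lemExplicitEnd}, I would exhibit an explicit $A$-basis and then check it is norm-orthogonal, which pins down the SMB degrees. Let $\beta_1,\ldots,\beta_r$ be an $\F_q$-basis of $\F_{q^r}$ and consider the $r^2$ elements
\[
v_{a,b} \colonequals \beta_a \tau^{bd}, \qquad 1\leq a\leq r,\ 0\leq b\leq r-1,
\]
with $\deg_\tau(v_{a,b})=bd$. The ring is isomorphic to $\cO=A'[\pi]/(\pi^r-\fp)$ with $\pi=\tau^d$ and $A'=\F_{q^r}[T]$ (here $T$ acts as $\phi_T$). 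Since $A'$ is free over $A$ with basis $\beta_1,\dots,\beta_r$, and $\cO$ is free over $A'$ (on the left) with basis $1,\pi,\ldots,\pi^{r-1}$, the $v_{a,b}$ form an $A$-basis of $\La$.

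Next I would verify norm-orthogonality in the form \eqref{eq:norm-orthog}. A general element is $u=\sum_b c_b(\phi_T)\tau^{bd}$ with $c_b\in \F_{q^r}[T]$. Since $\deg_\tau(c_b(\phi_T)\tau^{bd}) = r\deg c_b + bd$, and the values $bd$ for $0\leq b\leq r-1$ are pairwise distinct mod $r$ because $\gcd(r,d)=1$, the leading $\tau$-degrees of the $r$ summands are pairwise distinct. Hence no cancellation occurs and $\deg_\tau u=\max_b(r\deg c_b+bd)$. Within a fixed $b$, $c_b=\sum_a \beta_a c_{a,b}$ with $c_{a,b}\in A$, and $\deg c_b=\max_a\deg c_{a,b}$ by the $\F_q$-linear independence of the $\beta_a$; the leading coefficient of $c_b(\phi_T)$ is (leading coefficient of $c_b$) to a power, so it is nonzero. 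This gives $\norm{u}_D=\max_{a,b}\abs{c_{a,b}}\norm{v_{a,b}}_D$ for $c_{a,b}\in A$. Extending to $c\in \Fi$ is unnecessary: for the count in Theorem~\ref{thm:dimMs-elementary} and for identifying successive minima, orthogonality over $A$ suffices, since an orthogonal $A$-basis realizes the successive minima (the $k$-th minimum is read off from the degree-sorted basis). Alternatively, orthogonality over $A$ extends to $F$ by clearing denominators, and to $\Fi$ by continuity and density.

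Finally I would conclude by uniqueness of the SMB multiset \cite[Lem.~4.2]{Taguchi}: the multiset is $\{bd\}$, each value appearing $r$ times. As a sanity check, this is consistent with Proposition~\ref{prop:uniform-residues}, since the $bd$ cover all residues mod $r$, and with Proposition~\ref{prop:sum-ek}, since $r\sum_b bd=r^2(r-1)d/2$. The main obstacle is the justification that an $A$-orthogonal basis is an SMB in Taguchi's sense; I would argue directly that the $k$-th successive minimum is at least the $k$-th smallest $\norm{v}$, because any $k$ linearly independent lattice vectors involve some basis vector of index $\geq k$ with a nonzero coefficient, and orthogonality then bounds its norm from below.
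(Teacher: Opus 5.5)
Your proof is correct. It uses the same basis and the same key fact as the paper's: the exponents $bd$, $0\leq b\leq r-1$, are pairwise distinct modulo $r$ because $\gcd(r,d)=1$, so leading terms cannot cancel. The difference lies in how you verify norm-orthogonality.

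The paper passes to the completion. It uses the cyclic-algebra description of $D_\infty$ over the unramified extension $F_{\infty,r}$ and computes $w(C_b\tau^{bd})=\min_a\ord_\infty(c_{a,b})-bd/r$, using that the $\zeta_a$ are units with $\F_q$-independent reductions. This gives \eqref{eq:norm-orthog} directly for all coefficients in $\Fi$, which is the form in which it invokes \cite[Lem.~4.2]{Taguchi}.

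You instead compare leading $\tau$-terms inside $\twist{\oF_\fp}$ for coefficients in $A$. This needs no knowledge of the structure of $D_\infty$. You then have two ways to finish, and both work:
\begin{itemize}
\item You can extend to $\Fi$: clearing denominators gives $F$-coefficients, and density plus continuity of both sides gives $\Fi$.
\item You can skip the extension by showing directly that an $A$-orthogonal basis realizes the successive minima. Among $k$ independent lattice vectors, one must have a nonzero coefficient on some $v_j$ with $j\geq k$. That coefficient lies in $A$, so its absolute value is $\geq 1$, and orthogonality then forces that vector's norm to be $\geq\norm{v_k}_D$. This also gives the invariance of the multiset without citing Taguchi.
\end{itemize}

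Your route is more elementary and self-contained. The paper's route yields the full $\Fi$-orthogonality in one step and fits the local-algebra viewpoint used throughout Section~\ref{sec:smb-stabilization}.
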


\begin{proof}
	By Lemma~\ref{lemExplicitEnd}, $\La = \F_{q^r}[\phi_T,\tau^d]$. As an $A$-module (with $T$ acting via $\phi_T$), $\La$ has the basis
	\[
	\{\zeta_a\tau^{bd} : 0\leq a\leq r-1,\ 0\leq b\leq r-1\},
	\]
	where $\zeta_0 = 1,\zeta_1,\ldots,\zeta_{r-1}$ is an $\F_q$-basis of $\F_{q^r}\subset \La$. Each basis element has $\deg_\tau = bd$.
	
	We show this basis is norm-orthogonal, equivalently an SMB by \cite[Lem.~4.2]{Taguchi}. The reduced norm of $\tau^d$ is $\Nr(\tau^d) = \fp$: indeed, $\Ht(\tau^d) = d$ and $\tau^d$ has trivial kernel as an $A$-module scheme, so \cite[Lem.~3.10]{Gekeler91} gives $\Nr(\tau^d) = \fp^{d/d} = \fp$. Hence $w(\tau^d) = -d/r$.
	
	Work in the cyclic-algebra description $D_\infty = F_{\infty,r}\langle\pi\rangle$, where $F_{\infty,r}$ is the unramified extension of $\Fi$ of degree $r$, $\pi^r = T^{-1}$, and $\pi a = \sigma(a)\pi$ for $a\in F_{\infty,r}$, with $\sigma$ the generator of $\Gal(F_{\infty,r}/\Fi)$ lifting the $\F_q$-Frobenius on $\F_{q^r}$. The valuation extends as $w(\sum_{b'=0}^{r-1} \alpha_{b'}\pi^{b'}) = \min_{0\leq b'\leq r-1}(\ord_\infty(\alpha_{b'}) + b'/r)$, with the ultrametric inequality an equality because the values have pairwise distinct fractional parts mod $1$.
	
	For $c_{a,b}\in \Fi$, group $\sum_{a,b}c_{a,b}\zeta_a\tau^{bd} = \sum_b C_b\tau^{bd}$ with $C_b = \sum_a c_{a,b}\zeta_a \in F_{\infty,r}$. Since $F_{\infty,r}/\Fi$ is unramified and $\{\zeta_a\}$ reduces to an $\F_q$-basis of $\F_{q^r}$, the elements $\zeta_a$ are units whose reductions are $\F_q$-linearly independent, so $\ord_\infty(C_b) = \min_a\ord_\infty(c_{a,b})$. Hence
	\[
	w(C_b\tau^{bd}) = \min_a\ord_\infty(c_{a,b}) - bd/r.
	\]
	The fractional parts $-bd/r \bmod 1$ for $b\in\{0,\ldots,r-1\}$ are distinct (using $\gcd(r,d) = 1$), so $w(\sum_b C_b\tau^{bd}) = \min_b w(C_b\tau^{bd}) = \min_{a,b}(\ord_\infty(c_{a,b}) - bd/r)$. Translating to $\norm{\cdot}_D = q^{-w(\cdot)}$,
	\[
	\Bigl\|\sum_{a,b}c_{a,b}\zeta_a\tau^{bd}\Bigr\|_D = \max_{a,b}\abs{c_{a,b}}\cdot q^{bd/r} = \max_{a,b}\abs{c_{a,b}}\cdot \norm{\zeta_a\tau^{bd}}_D,
	\]
	which is norm-orthogonality. Hence $\{\zeta_a\tau^{bd}\}$ is an SMB of $\La$ with minima $\{bd:0\leq b\leq r-1\}$, each with multiplicity $r$; in particular $e_{\max}(\La) = (r-1)d$.
\end{proof}

%==================================================================
\subsection{Explicit computations in rank \texorpdfstring{$2$}{2}}\label{ssCompBM}

Let $\phi$ and $\psi$ be supersingular Drinfeld modules of rank $2$ over $\oF_\fp$. We describe an algorithm for computing
\[
\cM_s(\phi,\psi) = \{u \in \Hom(\phi,\psi) : \deg_\tau(u) \leq s\}
\]
as a subspace of $\twist{\oF_\fp}$, then apply it to enumerate the pre-stable dimension tuples $(m_0(\phi),\ldots,m_{d-2}(\phi))$ across all supersingular isomorphism classes for two specific primes. The algorithm extends to higher ranks with cosmetic changes; we restrict to rank $2$ for concreteness.

Choose convenient models. If $j(\phi) \neq 0$, take $\phi_T = t + \tau + x\tau^2$ with $x$ a root of $H_\fp(x)$ (cf. \eqref{eqLegendre}); if $j(\phi) = 0$, take $\phi_T = t + \tau^2$. By Proposition~\ref{lemLegendre}, $\phi$ and all its endomorphisms are defined over $\F_{\fp^2}$, and all isogenies $\phi \to \psi$ are defined over $\F_{\fp^2}$ in many cases, and over $\F_{\fp^{2(q-1)}}$ in the worst case. Let $k = \F_{\fp^2}$ when computing endomorphisms, and $k = \F_{\fp^{2(q-1)}}$ when computing isogenies between non-isomorphic Drinfeld modules.

Write a general element $u \in \cM_s(\phi,\psi)$ as $u = \sum_{i=0}^s u_i\tau^i$ with $u_i \in k$, the $u_i$ being unknowns. The products $u\phi_T$ and $\psi_T u$ expand as skew polynomials in $\tau$ supported in degrees $\leq s+2$; equating coefficients of $\tau^j$ in $u\phi_T = \psi_T u$ for $0 \leq j \leq s+2$ yields a homogeneous linear system in the $u_i$ and their Frobenius twists. Fix an $\F_q$-basis of $k$ and identify each $u_i$ with an $m$-tuple over $\F_q$ ($m = [k:\F_q]$). The commutation relations become a homogeneous $\F_q$-linear system with $(s+1)m$ unknowns and $(s+3)m$ equations, whose solution space is $\cM_s(\phi,\psi)$.

For increasing $s$ we compute the spaces $\cM_0 \subset \cM_1 \subset \cM_2 \subset \cdots$ via nested bases: a basis of $\cM_{s-1}(\phi,\psi)$ is embedded into the ambient space for $\cM_s(\phi,\psi)$ by appending $m$ zero coordinates, then extended to a full basis of $\cM_s(\phi,\psi)$. The resulting nested $\F_q$-basis is essential for enumerating isogenies of exact $\tau$-degree $s$, and the values $b_{ij}(\fm)$ of the Brandt matrices can be read off by running this procedure for each pair $(\phi_i,\phi_j)$ and partitioning $\cM_s(\phi_i,\phi_j)$ by reduced-norm class.

We have implemented this algorithm in Magma~\cite{Magma}; the computations in Examples~\ref{ex:d5} and \ref{ex:d4} were carried out using this implementation. We have applied this algorithm to enumerate the pre-stable tuples $(m_0(\phi),\ldots,m_{d-2}(\phi))$ for several small $(q,\fp)$. We present in detail two cases that illustrate the typical phenomena. In both, the observed tuples turn out to be in bijection with the SMB multisets permitted by the constraints of Section~\ref{sec:smb-stabilization}; this gives a sharp experimental confirmation that those constraints are tight.

\begin{example}\label{ex:d5}
	Let $q = 3$ and $\fp = T^5 + 2T + 1$, so $d = 5$ and $\#\F_\fp = 243$. The supersingular polynomial $H_\fp(x)$ has degree $(q^d - q)/(q^2 - 1) = 30$ and factors over $\F_\fp$ as a product of $16$ linear factors and $7$ irreducible quadratics. Together with the class $\phi_T = t + \tau^2$, supersingular by Lemma~\ref{lemDSpecial} since $d$ is odd, this gives $n = 31$ supersingular isomorphism classes over $\oF_\fp$.
	
	Computing $m_s(\phi)$ for $s = 0,1,2$ across all $31$ classes (working over $k = \F_{\fp^2}$) yields exactly four distinct triples $(m_0,m_1,m_2)$, distributed as follows:
	\begin{center}
		\begin{tabular}{lccc}
			\hline
			$\phi$ & \# classes & $(m_0,m_1,m_2)$ & SMB multiset $\{e_k\}$ \\
			\hline
			$\phi_T = t + \tau^2$ & $1$ & $(2,2,4)$ & $(0,0,5,5)$ \\
			$x_0 \in \F_\fp$, exceptional & $3$ & $(1,2,3)$ & $(0,1,4,5)$ \\
			$x_0 \in \F_\fp$, generic & $13$ & $(1,1,3)$ & $(0,2,3,5)$ \\
			$x_0 \in \F_{\fp^2}\setminus\F_\fp$ & $14$ & $(1,1,2)$ & $(0,3,3,4)$ \\
			\hline
		\end{tabular}
	\end{center}
	Here we call $x_0 \in \F_\fp$ \emph{exceptional} if $\End(\phi)$ contains an isogeny of $\tau$-degree exactly $1$, equivalently $e_2(\End(\phi)) = 1$; three of the $16$ $\F_\fp$-rational roots have this property. The fourteen roots in $\F_{\fp^2}\setminus\F_\fp$ form seven Galois-conjugate pairs, each pair giving a pair of Drinfeld modules with identical SMB invariants.
	
	We verify the SMB multisets independently from the constraints of Section~\ref{sec:smb-stabilization}. For $r = 2$ and $d = 5$ these constraints are
	\begin{enumerate}
		\item[(1)] $\sum_k e_k = r^2(r-1)d/2 = 10$ (Proposition~\ref{prop:sum-ek});
		\item[(2)] residue equidistribution mod $2$, each class represented twice (Proposition~\ref{prop:uniform-residues});
		\item[(3)] $e_{\max} \leq (r-1)d = 5$ (Corollary~\ref{cor:emax-r2});
		\item[(4)] $e_1 = 0$ (Lemma~\ref{lem:e1-zero}).
	\end{enumerate}
	A direct enumeration shows that (1)--(4) admit exactly four solutions:
	\[
	(0,0,5,5), \quad (0,1,4,5), \quad (0,2,3,5), \quad (0,3,3,4),
	\]
	matching the four observed SMB multisets. Each multiset yields a distinct triple $(m_0,m_1,m_2)$ via Theorem~\ref{thm:dimMs-elementary}, recovering the four observed triples in the order shown.
	
	The matching extends to identifying the underlying Drinfeld module. The multiset $(0,0,5,5)$ is the only one with two zero minima; by Lemma~\ref{lem:e1-zero}, this characterizes the class with $\ell(\phi) = 2$, i.e., $\phi_T = t + g\tau^2$, equivalently $\phi \cong t + \tau^2$. By Proposition~\ref{prop:phi-vanishing} this SMB shape is forced. The other three multisets, distinguished by the value of $e_2 \in \{1,2,3\}$, partition the $30$ classes with $j(\phi)\neq 0$.
\end{example}

\begin{example}\label{ex:d4}
	Let $q = 3$ and $\fp = T^4 + T + 2$, so $d = 4$ and $\#\F_\fp = 81$. Now $H_\fp(x)$ has degree $(q^d - 1)/(q^2 - 1) = 10$ and factors over $\F_\fp$ as a product of $4$ linear factors and $3$ irreducible quadratics. Since $d$ is even, $\phi_T = t + \tau^2$ is \emph{not} supersingular by Lemma~\ref{lemDSpecial}, so the supersingular classes are precisely the $n = 10$ classes corresponding to roots of $H_\fp$.
	
	The pre-stable range is $s\in\{0,1\}$. Computing $(m_0(\phi), m_1(\phi))$ across all $10$ classes yields two distinct pairs:
	\begin{center}
		\begin{tabular}{lccc}
			\hline
			$\phi$ & \# classes & $(m_0,m_1)$ & SMB multiset $\{e_k\}$ \\
			\hline
			$x_0 \in \F_\fp$ & $4$ & $(1,2)$ & $(0,1,3,4)$ \\
			$x_0 \in \F_{\fp^2}\setminus\F_\fp$ & $6$ & $(1,1)$ & $(0,2,3,3)$ \\
			\hline
		\end{tabular}
	\end{center}
	The six roots in $\F_{\fp^2}\setminus\F_\fp$ form three Galois-conjugate pairs. The constraints (1)--(4) of Example~\ref{ex:d5}, now with $\sum_k e_k = 8$ and $e_{\max}\leq 4$, admit exactly the two solutions $(0,1,3,4)$ and $(0,2,3,3)$; each gives a distinct pair $(m_0,m_1)$ via Theorem~\ref{thm:dimMs-elementary}, matching the table.
\end{example}

\begin{comment}\begin{rem}
	In both examples, the SMB multisets allowed by the constraints of Section~\ref{sec:smb-stabilization} are not only realized but realized by an explicit class of Drinfeld modules characterized by arithmetic conditions on the parameter $x_0$ (rationality over $\F_\fp$, the value of $e_2$). The structural reason for the trichotomy $e_2 \in \{1,2,3\}$ observed in Example~\ref{ex:d5}, or for the dichotomy $e_2 \in \{1,2\}$ in Example~\ref{ex:d4}, is not addressed here.
	\end{rem}
	\end{comment}

	%---------------------------------------------------------
	%\renewcommand{\bibliofont}{\normalsize}
	\bibliographystyle{amsalpha}
	\bibliography{bibliography.bib}
	
\end{document}